\newtheorem{theorem}{Theorem}[section]
\newtheorem{lemma}[theorem]{Lemma}
\newtheorem{proposition}[theorem]{Proposition}
\newtheorem{remark}[theorem]{Remark}
\def\ep{\varepsilon}
\def\C{\mathbb C}
\def\R{\mathbb R}
\def\S{\mathbb S}
\def\N{\mathbb N}
\def\pa{\partial}
\def\b{\backslash}
\def\diam{{\rm diam}(X)}
\def \Rm {\mathbb R}
\newcommand{\eps}{\varepsilon}
\newcommand{\mC}{\mathcal C}
\newcommand{\mG}{\mathcal G}
\newcommand{\cout}[1]{}
\def\ep{\varepsilon}
\def\C{\mathbb C}
\def\R{\mathbb R}
\def\S{\mathbb S}
\def\N{\mathbb N}
\def\mU{\mathcal U}
\def\pa{\partial}
\def\b{\backslash}
\def\r{\rangle}
\def\l{\langle}
\newcommand{\vertiii}[1]{{\left\vert\kern-0.25ex\left\vert\kern-0.25ex\left\vert #1 
    \right\vert\kern-0.25ex\right\vert\kern-0.25ex\right\vert}}
\title{Boundary Control for Transport Equations}
\author{Guillaume Bal\thanks{Departments of Statistics and Mathematics, University of Chicago, Chicago, IL 60637, USA; {\tt guillaumebal@uchicago.edu}}   \and Alexandre Jollivet\thanks{Laboratoire de Math\'ematiques Paul Painlev\'e, CNRS UMR 8524/Universit\'e Lille 1 Sciences et Technologies, 59655 Villeneuve d'Ascq Cedex, France; {\tt alexandre.jollivet@math.univ-lille1.fr}}}
\begin{document}
 
\maketitle


\begin{abstract}
  This paper considers two types of boundary control problems for linear transport equations. The first one shows that transport solutions on a subdomain of a domain $X$ can be controlled exactly from incoming boundary conditions for $X$ under appropriate convexity assumptions. This is in contrast with the only approximate control one typically obtains for elliptic equations by an application of a unique continuation property, a property which we prove does not hold for transport equations. We also consider the control of an outgoing solution from incoming conditions, a transport notion similar to the Dirichlet-to-Neumann map for elliptic equations. We show that for well-chosen coefficients in the transport equation, this control may not be possible. In such situations and by (Fredholm) duality, we obtain the existence of non-trivial incoming conditions that are compatible with vanishing outgoing conditions.
\end{abstract}

\renewcommand{\thefootnote}{\fnsymbol{footnote}}
\renewcommand{\thefootnote}{\arabic{footnote}}

\renewcommand{\arraystretch}{1.1}

\noindent{\bf Keywords:} Transport theory; boundary control; albedo operator; diffusion approximation; unique continuation.




%
\section{Introduction}
\label{sec:intro}

This paper concerns the control of steady-state (linear Boltzmann) transport equations from boundary data.  The most general transport equation considered here is of the form:
\begin{eqnarray}
v\cdot \nabla_x u(x,v)+\sigma(x,v) u(x,v)=\int_V k(x,v',v)u(x,v')dv',\ (x,v)\in X\times V,\ \label{boltz}
\end{eqnarray}
where $u(x,v)$ is the transport solution, posed on a spatial domain $x\in X$,  an open bounded subset of $\Rm^d$ for $d\geq1$ of class $C^1$, and a set of velocities $v\in V$, which is either a bounded open subset in $\Rm^d$ excluding a vicinity of $v=0$, or a co-dimension $1$ closed surface also excluding $v=0$ such as for instance the unit sphere for concreteness. Here, $\sigma(x,v)$ is the total absorption coefficient and $k(x,v',v)$ the scattering coefficient. 

Let us define the sets of incoming and outgoing directions $\Gamma_\pm=\Gamma_\pm(X):=\{(x,v)\in \pa X\times V\ |\ \pm v\cdot \nu(x)>0\}$ while $\Gamma:=\Gamma_+\cup\Gamma_-$. Under appropriate conditions on $(\sigma,k)$, the above equation is well-posed when augmented with prescribed incoming conditions $u=g$ on $\Gamma_-$. Within this framework, we consider two types of boundary controls.

The first one concerns the control of a prescribed transport solution $u_0(x,v)$ on a subdomain $X_0\times V$, where $X_0$ is an open subset such that $\bar X_0\subset X$: can one find $g$ on $\Gamma_-(X)$ such that $u_{|X_0}=u_0$ for $u$ solution of the above transport equation on $X$? A similar question arises in the setting of second-order scalar elliptic operators. In such a case, the control is approximate, as an application of the Runge approximation result, itself a rather direct consequence of the (weak) Unique Continuation Property (UCP) enjoyed by such operators. One of the main results of this paper is to show, under appropriate convexity assumptions on $X_0$ and $X$, that such a control is in fact {\em exact} for transport equations. Moreover, the control is not unique, so that the difference of two controls may result in a solution $u$ that does not vanish on $X\backslash X_0$ whereas $u_{|X_0}=0$. In other words, UCP (in that sense) does not hold for the transport equation independently of the scattering coefficients $(\sigma,k)$. 

The two models, transport and elliptic equations, both describe transport in highly scattering media. For instance, if $k=k(x)$ is replaced by $k(x)/\eps$ and $\sigma(x)=k(x)/\eps+\eps\sigma_a(x)$, then the transport solution is well approximated by the solution of an elliptic equation \cite{dlen6,LK} as the mean free path $\eps\to0$. For an infinite mean free path, with $k\equiv0$, transport solutions can be supported arbitrarily close to any line segment in $X$ so that UCP clearly does not hold. The limit $\eps\to0$ is, however, singular in the sense that boundary control is exact at $\eps>0$ and only approximate in the limit $\eps=0$, whereas what makes the approximation possible at $\eps=0$, namely UCP, does not hold for any $\eps>0$.

The second type of control we consider aims to find an incoming condition $u=g$ on $\Gamma_-$ such that $u_{|\Gamma_+}=f$ is prescribed. For second-order elliptic equations, this corresponds to finding Dirichlet conditions for prescribed Neumann conditions, or vice-versa, which is a well posed problem as the Dirichlet-to-Neumann map is a {\em isomorphism} in appropriate topologies. For the transport equation, the problem is richer. We show that the albedo operator, a well defined operator which maps $u$ on $\Gamma_-$ to $u_{|\Gamma_+}$, is surjective when scattering is sufficiently small. However, we identify a number of cases where the albedo operator, while still Fredholm (of index $0$), has a non-trivial kernel. In such settings, the outgoing condition $f$ needs to satisfy appropriate orthogonality properties to be controlled as the trace $u_{|\Gamma_+}=f$ of a transport solution $u$ in $X\times V$. In such cases, we also find by duality the existence of non trivial incoming conditions $u=g$ on $\Gamma_-$ such that $u_{|\Gamma_+}=0$, i.e., the effect of $g$ is invisible on the outgoing solution. 

\medskip
The outline of the paper is as follows. To understand the first control problem, we need to construct a transport solution on $X\backslash X_0$ with prescribed boundary conditions on $\Gamma_-(X_0)\cup\Gamma_+(X_0)$, which is not a subset of $\Gamma_-(X\backslash X_0)$. Section \ref{sec:forward} is devoted to the analysis of \eqref{boltz} augmented with boundary conditions on general sets $\mC$ defined such that for each line segment passing through $X$, we prescribe a boundary condition on one of its two points of intersections with $\partial X$. Under appropriate conditions on the coefficients $(\sigma,k)$, we obtain a transport solution and the definition of an albedo operator mapping conditions on $\mC$ to conditions on $\Gamma\backslash\mC$. This general transport theory will be the starting point of the analysis of  the boundary control of solutions on a subdomain presented in section \ref{sec:convex}. The Fredholm theory of the albedo operator from $\mC$ to $\Gamma\backslash\mC$ is presented in section \ref{sec:albedo} and used in section \ref{sec:outgoing} to analyze the specific control of outgoing boundary conditions on $\Gamma\backslash\mC=\Gamma_+$ from incoming conditions $\mC=\Gamma_-$.

\medskip

The boundary control of solutions on subdomains find several applications in the field of hybrid inverse problems; see \cite{alberti2018lectures,BMU-SIAP-15,BU-CPAM-13} for results in the elliptic framework and \cite{BCS-SIMA-16} for results in the setting of transport equations.

\section{Forward transport theory}
\label{sec:forward}

We assume that $X$ is an open bounded subset of $\Rm^d$ for $d\geq1$ of class $C^1$. The set of velocities $V$ is either a bounded open subset in $\Rm^d$ excluding a vicinity of $v=0$, or a co-dimension $1$ closed surface also excluding $v=0$. 

Let $\Gamma_\pm:=\{(x,v)\in \pa X\times V\ |\ \pm v\cdot \nu(x)>0\}$ and $\Gamma:=\Gamma_+\cup\Gamma_-$. We endow $\Gamma$ with the measure $d\xi(x,v)=|\nu(x)\cdot v|d\mu(x) dv$, where $d\mu$ denotes the canonical measure on $\pa X$ and $dv$ that on $V$.  Both $\Gamma_+$ and $\Gamma_-$ parametrize the set of lines in $\Rm^n$ passing through $X$. Typical boundary conditions consist in prescribing the trace of the solution on $\Gamma_-$. We consider the following general boundary conditions. 

Let $C_+$ be an arbitrary measurable subset of $\Gamma_+$ and $C_-$ the measurable subset of $\Gamma_-$ defined by
$$
C_-:=\Gamma_-\b\{(x,v)\in \Gamma_-\ |\ (x+\tau_+(x,v)v,v)\in C_+\}.
$$
We then define $\mC=C_-\cup C_+$. Note that $\mC$ also provides a parametrization of the set of lines passing through $X$. We now consider a transport theory with boundary conditions prescribed on $\mC$.

We first need to define a functional setting for the transport solution and its boundary traces. For $1\leq p<\infty$, we define two natural topologies to describe transport solutions
$$
W^p(X\times V)=\{\phi\in {\cal D}'(X\times V)\ |\ \phi\in \tau^{1\over p}L^p(X\times V),\ v\cdot\nabla_x\phi\in \tau^{{1\over p}-1}L^p(X\times V)\},
$$
$$
\tilde W^p(X\times V)=\{\phi\in L^p(X\times V)\ |\ \tau (v\cdot\nabla_x\phi)\in L^p(X\times V)\},
$$
endowed with the norms
\begin{eqnarray}
&&\|u\|_{W^p}:=\|\tau^{1-{1\over p}}(v\cdot \nabla_x)u\|_{L^p}+\|\tau^{-{1\over p}}u\|_{L^p},\\
&&\|u\|_{\tilde W^p}:=\|\tau( v\cdot \nabla_x ) u\|_{L^p}+\|u\|_{L^p}.
\end{eqnarray}
Above, we have introduced the usual travel times 
\begin{eqnarray*}
&&\tau_\pm(x,v)=\inf\{s\in (0,+\infty)\ |\ x\pm sv\not\in X\}, \textrm{ for }(x,v)\in (X\times V)\cup \Gamma_\mp,\\
&&\tau_\pm(x,v)=0\textrm{ for }(x,v)\in \Gamma_\pm.
\end{eqnarray*}
We also define the real valued function $\tau$ on $\bar X\times V$ by 
$$
\tau=\tau_-+\tau_+.
$$
Traces are then well defined according to the
\begin{lemma}
Let $\mG$ be a measurable subset  of $\Gamma$. Then, for $1\le p<\infty$ (it also holds for $p=\infty$),
\begin{eqnarray}
\|u_{|\mG}\|_{L^p(\mG,d\xi)}\le \|u\|_{W_p},
\ \|u_{|\mG}\|_{L^p(\mG,\tau d\xi)}\le \|u\|_{\tilde W_p}.
\end{eqnarray}
\end{lemma}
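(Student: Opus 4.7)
The plan is to establish both estimates by integrating along characteristics via the fundamental theorem of calculus (FTC), then converting the resulting boundary integral over $\mG$ into an interior integral over $X\times V$ by a change of variables. It suffices to treat the case $\mG\subset\Gamma_-$: the contribution from $\mG\cap\Gamma_+$ is handled identically by running the FTC along backward characteristics (using $\tau_-$ in place of $\tau_+$), and the two pieces add up to the desired estimate on all of $\mG$. On $\mG\subset\Gamma_-$ one has $\tau_+(x,v)>0$ for $d\xi$-almost every $(x,v)$.

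For such $(x,v)$ and any $0<t<\tau_+(x,v)$, the FTC along the characteristic $s\mapsto x+sv$ gives
\[
u(x,v)=u(x+tv,v)-\int_0^t (v\cdot\nabla_x u)(x+sv,v)\,ds.
\]
Averaging the $p$-th power of both sides over $t\in(0,\tau_+(x,v))$ and applying H\"older's inequality to each integral yields a pointwise bound of the form
\[
|u(x,v)|^p \;\le\; C_p\Bigl(\tau_+^{-1}\int_0^{\tau_+}\!|u(x+tv,v)|^p\,dt \;+\; \tau_+^{p-1}\int_0^{\tau_+}\!|(v\cdot\nabla_x u)(x+sv,v)|^p\,ds\Bigr),
\]
with $C_p$ depending only on $p$.

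The next step is to integrate this bound against $d\xi$ over $\mG$ and invoke the change of variables $(x,v,t)\mapsto(y,v)$ with $y=x+tv$. The Jacobian factor $|\nu(x)\cdot v|$ built into $d\xi$ gives the clean identity $d\xi(x,v)\,dt=dy\,dv$ on $X\times V$, while the chord-length invariance $\tau_+(x,v)=\tau(y,v)$ (valid since $\tau$ is constant along characteristics) transforms the two terms into $\|\tau^{-1/p}u\|_{L^p}^p$ and $\|\tau^{1-1/p}(v\cdot\nabla_x u)\|_{L^p}^p$ respectively. Combining them via $a^p+b^p\le(a+b)^p$ for $a,b\ge0$ and $p\ge1$ yields the first inequality. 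The $\tilde W^p$ estimate follows from exactly the same scheme, but with the pointwise bound integrated against $\tau\,d\xi$ rather than $d\xi$: the extra factor of $\tau=\tau_+$ on $\Gamma_-$ matches the $\tau_+^{-1}$ prefactor of the first term (giving $\|u\|_{L^p}^p$) and, combined with $\tau_+^{p-1}$ in the second term, yields $\tau_+^p=\tau(y,v)^p$ after the change of variables, producing $\|\tau(v\cdot\nabla_x u)\|_{L^p}^p$.

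The technical subtlety is that $W^p$ and $\tilde W^p$ contain distributions a priori, so the trace $u_{|\mG}$ must be defined before being estimated. I would first prove the inequalities for smooth functions $u\in C^1(\bar X\times V)$, for which the FTC is unambiguous, and then extend them by density: the trace of a general element of $W^p$ (resp.\ $\tilde W^p$) is defined as the $L^p(\mG,d\xi)$ (resp.\ $L^p(\mG,\tau\,d\xi)$) limit of traces of smooth approximants, the limit being well-defined precisely because of the inequalities. The case $p=\infty$ bypasses any averaging in $t$: writing $|(v\cdot\nabla_x u)(x+sv,v)|\le\tau(x+sv,v)^{-1}\|\tau(v\cdot\nabla_x u)\|_{L^\infty}$ and using that $\tau\equiv\tau_+(x,v)$ along the chord gives $|u(x,v)|\le\|u\|_{L^\infty}+\|\tau(v\cdot\nabla_x u)\|_{L^\infty}$, which is the $\tilde W^\infty$ bound, and similarly for $W^\infty$.
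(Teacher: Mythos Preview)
Your approach is essentially the same as the paper's: both integrate the FTC identity along characteristics, average in $t$, apply H\"older, and convert boundary integrals to interior integrals via the change of variables $(x,v,t)\mapsto(x\pm tv,v)$ with Jacobian $|\nu(x)\cdot v|$; both also argue first for $u\in C^1(\bar X\times V)$ and extend by density. The only substantive difference is the order of operations: you raise the FTC identity to the $p$-th power and then average over $t$, which forces the pointwise inequality $|a+b|^p\le 2^{p-1}(|a|^p+|b|^p)$ and produces a constant $C_p=2^{p-1}>1$. The paper instead averages the FTC identity over $t$ \emph{first}, obtaining the exact representation
\[
u(x,v)=\frac{1}{\tau}\int_0^{\tau}\big[(\tau-t)(v\cdot\nabla_x u)(x\mp tv,v)+u(x\mp tv,v)\big]\,dt,
\]
and only then takes the $L^p(\Gamma_\pm,d\xi)$ norm, applying Minkowski (triangle inequality in $L^p$) before H\"older on the inner integral. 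This yields the sharp constant $1$ on each of $\Gamma_+$ and $\Gamma_-$, matching the statement of the lemma. Your argument is correct as a proof of boundedness of the trace, but if you want the constant $1$ as stated, swap the order: average first, then use Minkowski rather than the pointwise $p$-th power bound.
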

The Lemma will be used for $\mG=\mC$ or $\mG=\Gamma\b \mC$.
\begin{proof}
Set $\mG_\pm:=\mG\cap \Gamma_\pm$. Let $u\in C^1(\bar X\times V)$. Then for $(x,v)\in \Gamma_\pm$
$$
u(x,v)=\mp{1\over \tau(x,v)}\int_0^{\tau(x,v)}\big((\tau(x,v)-t)(v\cdot\nabla_x) u(x \mp tv,v)+u(x \mp tv,v)\big)dt.
$$
Therefore
\begin{eqnarray*}
&&\Big(\int_{\Gamma_\pm}\chi_{\mG_\pm}(x,v)|u(x,v)|^p d\xi(x,v)\Big)^{1\over p}\le \Big(\int_{\Gamma_\pm}|u(x,v)|^p d\xi(x,v)\Big)^{1\over p}\\
&\le &\Big(\int_{\Gamma_\pm}{\tau(x,v)^{-p}}\\
&&\times
\Big(\int_0^{\tau(x,v)}\big((\tau(x,v)-t)|(v\cdot\nabla_x) u(x \mp tv,v)|+|u|(x \mp tv,v)\big)dt\Big)^p d\xi(x,v)\Big)^{1\over p}\\
&\le &\Big(\int_{\Gamma_\pm}\Big(\int_0^{\tau(x,v)}|(v\cdot\nabla_x) u(x \mp tv,v)|dt\Big)^p d\xi(x,v)\Big)^{1\over p}\\
&&+\Big(\int_{\Gamma_\pm}{1\over \tau(x,v)^p}\Big(\int_0^{\tau(x,v)}|u|(x \mp tv,v)dt\Big)^p d\xi(x,v)\Big)^{1\over p}\\
&\le&\Big(\int_{\Gamma_\pm}\tau(x,v)^{p-1}\int_0^{\tau(x,v)}|(v\cdot\nabla_x) u(x \mp tv,v)|^p dt d\xi(x,v)\Big)^{1\over p}\\
&&+\Big(\int_{\Gamma_\pm}\tau^{-1}(x,v)\int_0^{\tau(x,v)}|u|^p(x \mp tv,v)dt d\xi(x,v)\Big)^{1\over p}=\|u\|_{W^p}.
\end{eqnarray*}
Similarly replacing $u$ by $\tau^{1\over p}u$ above we obtain
\begin{equation*}
\Big(\int_{\Gamma_\pm}\chi_{\mG_\pm}(x,v)|u(x,v)|^p \tau(x,v)d\xi(x,v)\Big)^{1\over p}\le \|u\|_{\tilde W^p}.
\end{equation*}

\end{proof}

Adjoint to the notion of trace is that of a lifting operator $J_\mC$ defined by 
$$
J_{\mC}g(x,v)=g(x\pm \tau_\pm(x,v)v,v),\ \textrm{ for }(x,v)\in X\times V\textrm{ s.t. }(x\pm \tau_\pm(x,v)v,v)\in\mC_\pm, 
$$ 
for $g\in L^\infty(\mC,\R)$. Note that $(v\cdot \nabla_x) J_{\mC}g=0$. We have the
\begin{lemma}\label{lem:lift}
Let $1\le p<\infty$ (it actually holds for $p=\infty$). 
The operator $J_{\mC}$ extends as a bounded operator from $L^p(\mC, d\xi)$ to $W^p$, and from $L^p(\mC,\tau d\xi)$ to $\tilde W^p$.
\end{lemma}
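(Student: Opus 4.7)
The plan is to first establish the norm identities for $g\in L^\infty(\mC)$, which is dense in the spaces on $\mC$, and then extend $J_{\mC}$ by continuity. Because $J_{\mC}g$ is constant along each characteristic line $t\mapsto x+tv$, one has $(v\cdot\nabla_x)J_{\mC}g=0$ in $\mathcal{D}'(X\times V)$ (verify by integration by parts against a test function $\phi\in C_c^\infty(X\times V)$, whose traces on $\partial X$ vanish). Consequently
$$
\|J_{\mC}g\|_{W^p}=\|\tau^{-1/p}J_{\mC}g\|_{L^p(X\times V)},\qquad \|J_{\mC}g\|_{\tilde W^p}=\|J_{\mC}g\|_{L^p(X\times V)},
$$
so only the bulk $L^p$ norms of $J_{\mC}g$ need to be controlled.

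The bulk computation rests on the partition $X\times V=X_-\cup X_+$ (up to a null set), where $X_\pm:=\{(x,v)\in X\times V:(x\mp\tau_\mp(x,v)v,v)\in C_\pm\}$; this is a genuine partition because the defining relation between $C_-$ and $C_+$ forces each characteristic through $X$ to meet $\mC$ in exactly one point. Parametrize $X_-$ by the straight-line flow $(x_0,v,t)\mapsto(x_0+tv,v)$ for $(x_0,v)\in C_-$ and $0<t<\tau(x_0,v)$; the standard coarea-type identity gives $dx\,dv=dt\,d\xi(x_0,v)$, and symmetrically for $X_+$. The second key ingredient is the flow invariance of $\tau$: for $(x_0,v)\in\Gamma_-$ and $0<t<\tau(x_0,v)$,
$$
\tau(x_0+tv,v)=\tau_+(x_0+tv,v)+\tau_-(x_0+tv,v)=(\tau(x_0,v)-t)+t=\tau(x_0,v).
$$

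Combining these ingredients,
$$
\|\tau^{-1/p}J_{\mC}g\|_{L^p}^p=\int_{\mC}|g(x_0,v)|^p\int_0^{\tau(x_0,v)}\tau(x_0,v)^{-1}\,dt\,d\xi(x_0,v)=\int_{\mC}|g|^p\,d\xi,
$$
and by the same change of variables without the $\tau^{-1}$ factor, $\|J_{\mC}g\|_{L^p}^p=\int_{\mC}|g|^p\,\tau\,d\xi$. Both bounds are in fact equalities. Since $X$ is bounded, $\tau$ is bounded on $X\times V$, so $L^\infty(\mC)$ is dense in both $L^p(\mC,d\xi)$ and $L^p(\mC,\tau\,d\xi)$, and $J_{\mC}$ extends to a bounded (in fact isometric) operator on each.

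The main technical point to be careful with is the justification of the characteristic change of variables when $C_\pm$ are only measurable. This relies on the measurability of $\tau_\pm$ on $\bar X\times V$ and on the classical coarea identity for the straight-line flow, both standard facts in linear transport theory that are already implicit in the proof of the preceding trace lemma.
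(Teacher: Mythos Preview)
Your approach is essentially identical to the paper's: reduce to the bulk $L^p$ norm via $(v\cdot\nabla_x)J_{\mC}g=0$, partition $X\times V$ according to which endpoint of each characteristic lies in $\mC$, and evaluate by the straight-line change of variables to obtain the exact isometry $\|\tau^{-1/p}J_{\mC}g\|_{L^p}=\|g\|_{L^p(\mC,d\xi)}$ (and similarly for $\tilde W^p$). The paper carries this out in one displayed computation without the explicit density discussion, but the content is the same.

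One small slip: your definition of $X_\pm$ has the signs reversed. As written, $X_+=\{(x,v):(x-\tau_-(x,v)v,v)\in C_+\}$ is empty since $(x-\tau_-(x,v)v,v)\in\Gamma_-$ while $C_+\subset\Gamma_+$. It should read $X_\pm=\{(x,v):(x\pm\tau_\pm(x,v)v,v)\in C_\pm\}$, matching the paper's definition of $J_{\mC}$. Your subsequent parametrization of $X_-$ from $C_-$ shows you had the right picture in mind, so this is just a typo to fix.
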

\begin{proof}This is the calculation
\begin{eqnarray*}
\|J_{\mC}g\|_{W^p}&=&\|\tau^{-{1\over p}}J_{\mC}g\|_{L^p}=\Big(\int_{\mC_-}\int_0^{\tau_+(x,v)}\tau_+(x,v)^{-1}|J_{\mC}g(x+tv,v)|^p dt d\xi(x,v)\\
&&+\int_{\mC_+}\int_0^{\tau_-(x,v)}\tau_-(x,v)^{-1}|J_{\mC}g(x-tv,v)|^p dt d\xi(x,v)\Big)^{1\over p}\\
&=&\Big(\int_{\mC_-}|g(x,v)|^p d\xi(x,v)+\int_{\mC_+}|g(x,v)|^p d\xi(x,v)\Big)^{1\over p}=\|g\|_{L^p(\mC,d\xi)}.
\end{eqnarray*}
Similarly,
$
\|J_{\mC}g\|_{\tilde W^p}=\|g\|_{L^p(\mC,\tau d\xi)}.
$
\end{proof}

We now consider the transport equation \eqref{boltz} with general boundary condition
\begin{equation}
u=g,\ (x,v)\in \mC. 
\end{equation}
We assume that
$\tau \sigma\in L^\infty$,
and define the scattering coefficients
$$
\sigma_s(x,v)=\int_V k(x,v,v')dv',\ \sigma_s'(x,v)=\int_V k(x,v',v)dv',
$$
as well as the following integrating factor
\begin{equation}\label{eq:E}
E_\pm(x,v,t):=e^{\pm\int_0^t\sigma(x\pm sv,v)ds}.
\end{equation}
It is useful to introduce the operators $L_\mC$, $T^{-1}_\mC$ and $K$ as
\begin{eqnarray*}
L_{\mC} g(x,v)&=&J_{\mC}g(x,v)
\left\lbrace 
\begin{array}{l}
E_-(x,v,\tau_-(x,v)),\ (x-\tau_-(x,v)v,v)\in \mC_-,\\
E_+(x,v,\tau_+(x,v)),\ (x+\tau_+(x,v)v,v)\in \mC_+,
\end{array}
\right.\\
Ku(x,v)&=&\int_V k(x,v',v)u(x,v')dv',\ (x,v)\in X\times V,\\
T_{\mC}^{-1}f(x,v)&=&
\left\lbrace 
\begin{array}{l}
\int_0^{\tau_-(x,v)}E_-(x,v,t)f(x-tv,v)dt,\ (x-\tau_-(x,v)v,v)\in \mC_-,\\
-\int_0^{\tau_+(x,v)}E_+(x,v,t)f(x+tv,v)dt,\ (x+\tau_+(x,v)v,v)\in \mC_+.
\end{array}
\right.
\end{eqnarray*}
We note that 
\begin{eqnarray*}
&&v\cdot\nabla_x L_{\mC} g+\sigma L_{\mC} g=0,\quad L_{\mC} g_{|\mC}=g,\\
&&v\cdot\nabla_x T^{-1}_\mC f+\sigma T^{-1}_\mC f=f,\quad
T^{-1}_\mC f_{|\mC}=0.
\end{eqnarray*}
\begin{lemma}
\label{lem_bound1}
Let $1\le p\le \infty$. We have the following bounds
\begin{eqnarray}
\|L_{\mC} g\|_{W^p}&\le&  e^{\|\tau\sigma\|_\infty}(1+\|\tau\sigma\|_\infty)\|g\|_{L^p(\mC,d\xi)},\label{i1a}\\
\|L_{\mC} g\|_{\tilde W^p}&\le& e^{\|\tau\sigma\|_\infty}(1+\|\tau\sigma\|_\infty)\|g\|_{L^p(\mC,\tau d\xi)},\label{i1b}\\
\|\tau^{-\ep}T^{-1}_\mC f\|_{L^p}&\le& e^{\|\tau\sigma\|_\infty}\|\tau^{1-\ep}f\|_{L^p}\ \textrm{ for }\ep\in \R,\label{i2}
\end{eqnarray}
\begin{eqnarray}
\|T^{-1}_\mC f\|_{W^p}&\le& (2+\|\tau\sigma\|_\infty)e^{\|\tau\sigma\|_\infty}\|\tau^{1-{1\over p}}f\|_{L^p},\label{i3}\\
\|T^{-1}_\mC f\|_{\tilde W^p}&\le& (2+\|\tau\sigma\|_\infty)e^{\|\tau\sigma\|_\infty}\|\tau f\|_{L^p}.\nonumber
\end{eqnarray}
\end{lemma}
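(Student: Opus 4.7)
The plan is to read off all five inequalities from two ingredients already on record: the pointwise identities
\[
v\cdot\nabla_x L_\mC g+\sigma L_\mC g=0,\qquad v\cdot\nabla_x T^{-1}_\mC f+\sigma T^{-1}_\mC f=f,
\]
and the baseline bound $|E_\pm(x,v,t)|\le e^{\|\tau\sigma\|_\infty}$ on the relevant integration interval. The latter is elementary: $\tau$ is constant along characteristics, so $\sigma(x\pm sv,v)\le \|\tau\sigma\|_\infty/\tau(x,v)$, and integration over a segment of length $\le\tau$ gives the claimed bound on the exponent.

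The inequalities \eqref{i1a} and \eqref{i1b} come first. Using the first PDE identity to replace $v\cdot\nabla L_\mC g$ by $-\sigma L_\mC g$, the weight $\tau^{1-1/p}\sigma$ is dominated by $\|\tau\sigma\|_\infty\,\tau^{-1/p}$, so the derivative term in $\|L_\mC g\|_{W^p}$ is bounded by a multiple of the zero-order term $\|\tau^{-1/p}L_\mC g\|_{L^p}$. The pointwise bound on $E_\pm$ reduces this in turn to $e^{\|\tau\sigma\|_\infty}\|\tau^{-1/p}J_\mC g\|_{L^p}$, and since $v\cdot\nabla J_\mC g=0$ the last quantity equals $\|J_\mC g\|_{W^p}=\|g\|_{L^p(\mC,d\xi)}$ by Lemma \ref{lem:lift}. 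The $\tilde W^p$ bound is identical, with $\tau^{-1/p}$ replaced by $1$ and $\|J_\mC g\|_{L^p}=\|g\|_{L^p(\mC,\tau d\xi)}$ supplied by the other half of Lemma \ref{lem:lift}.

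The main step is \eqref{i2}. Starting from $|T^{-1}_\mC f(x,v)|\le e^{\|\tau\sigma\|_\infty}\int_0^{\tau_\mp(x,v)}|f(x\mp tv,v)|\,dt$, H\"older pulls out a factor $\tau_\mp(x,v)^{p-1}$; multiplying by $\tau^{-\eps p}(x,v)$ and using $\tau_\mp\le\tau$ together with the characteristic invariance $\tau(x\mp tv,v)=\tau(x,v)$ allow this weight to be absorbed as a pointwise factor $\tau^{p(1-\eps)-1}(x\mp tv,v)$ under the inner integral. Integrating in $(x,v)\in X\times V$ and making the change of variables $x'=x\mp tv$ (Jacobian one), the domain $X\times[0,\tau_\mp(x,v)]$ transforms into $X\times[0,\tau_\pm(x',v)]$, so the $t$-integral produces the factor $\tau_\pm(x',v)\le \tau(x',v)$, completing the bound by $e^{p\|\tau\sigma\|_\infty}\|\tau^{1-\eps}f\|^p_{L^p}$. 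The case $p=\infty$ follows directly from the same pointwise estimate.

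Granting \eqref{i2}, both \eqref{i3} and its $\tilde W^p$ variant are short. The second PDE identity gives $\|\tau^{1-1/p}(v\cdot\nabla)T^{-1}_\mC f\|_{L^p}\le \|\tau^{1-1/p}f\|_{L^p}+\|\tau\sigma\|_\infty\|\tau^{-1/p}T^{-1}_\mC f\|_{L^p}$; plugging \eqref{i2} with $\eps=1/p$ into the second term, adding the zero-order piece (also controlled by \eqref{i2}), and using $1\le e^{\|\tau\sigma\|_\infty}$ collapses the constants into $(2+\|\tau\sigma\|_\infty)e^{\|\tau\sigma\|_\infty}$. The $\tilde W^p$ estimate is the same with $\eps=0$. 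The one place where genuine care is needed is the change of variables in \eqref{i2}: one must verify that $(x,t)\mapsto (x'=x\mp tv,t)$ is a measure-preserving bijection between the two prismatic sets above, which reduces to the identity $\tau_\mp(x'\pm tv,v)=t+\tau_\mp(x',v)$ valid along each characteristic.
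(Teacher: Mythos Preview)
Your proof is correct and follows essentially the same approach as the paper's: for \eqref{i1a}--\eqref{i1b} you both use the identity $v\cdot\nabla_x L_\mC g=-\sigma L_\mC g$ together with the pointwise bound on $E_\pm$ and Lemma~\ref{lem:lift}; for \eqref{i2} you both apply H\"older on the inner integral and then a Fubini-type swap, and \eqref{i3} is derived from \eqref{i2} and the identity $v\cdot\nabla_x T^{-1}_\mC f+\sigma T^{-1}_\mC f=f$ in both cases. The only cosmetic difference is in the bookkeeping for \eqref{i2}: the paper passes to boundary coordinates on $\mC_\pm$ (writing $x=x_0\mp tv$ with $x_0\in\mC_\pm$, $dx\,dv=dt\,d\xi$) and then swaps the order of the resulting double $t$--$s$ integral, whereas you stay in $X\times V$ and perform the translation $x'=x\mp tv$ directly; both are equivalent ways of carrying out the same Fubini step.
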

We provide a proof of the Lemma in Appendix \ref{sec:appbound}. 
The above results allow us to solve the transport equation in the absence of scattering, which is then incorporated as a perturbation. We then need the following bounds
\begin{lemma}
\label{lem_K}
Let $1\le p<\infty$.
Assume either
\begin{equation}
\kappa_p(x):= \int_V\big(\int_V|k|^{p\over p-1}(.,v',v)dv'\big)^{p-1}dv\in L^\infty(X)\textrm{ when } p>1,\label{i10a}
\end{equation}
\begin{equation}
\mbox{ or } \qquad \|\sigma_s\|_\infty+\|\sigma_s'\|_\infty<\infty.\label{i10b}
\end{equation}
Then the operator $K$ is bounded in $L^p(X\times V)$ and 
\begin{equation}
\|K\|_{\mathcal{L}(L^p(X\times V))}\le 
\left\lbrace
\begin{array}{l}
\big\|\kappa_p(x)\big\|_\infty^{1\over p}\textrm{ when }p>1\textrm{ and }\eqref{i10a} \textrm{ holds},\\
\|\sigma_s\|_\infty^{1\over p}\|\sigma_s'\|_\infty^{p-1\over p} \textrm{ when }\eqref{i10b} \textrm{ holds}.
\end{array}
\right.
\label{i10c}
\end{equation}

Assume either
\begin{equation}
\tilde\kappa_p(x) := \int_V \tau^{p-1} \big(\int_V |k(x,v'',v)|^{p\over p-1}\tau(x,v'')^{1\over p-1} dv''\big)^{p-1}dv \in {L^\infty(X)}\textrm{ when } p>1,\label{i11a}
\end{equation}
\begin{equation}
\mbox{ or } \qquad \|\tau\sigma_s\|_\infty+\|\tau\sigma_s'\|_\infty<\infty.\label{i11b}
\end{equation}
Then the operator $K$ is bounded from $\tau^{1\over p}L^p$ to $\tau^{-{p-1\over p}}L^p$, and
\begin{eqnarray}
\|K\|_{\mathcal{L}(\tau^{1\over p}L^p(X\times V)), \tau^{-{p-1\over p}}L^p(X\times V))}
&\le&
\left\lbrace
\begin{array}{l}
\|\tilde\kappa_p\|_{L^\infty(X)}^{1\over p}\textrm{ when }p>1\textrm{ and }\eqref{i11a} \textrm{ holds},\\
\|\tau\sigma_s\|_\infty^{1\over p}\|\tau\sigma_s'\|_\infty^{p-1\over p} \textrm{ when }\eqref{i11b} \textrm{ holds}.
\end{array}
\right.
\end{eqnarray}
\end{lemma}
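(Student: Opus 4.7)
The strategy throughout is to apply Hölder's inequality in the $v'$ variable with an appropriate weight chosen so that, after raising to the $p$-th power and integrating in $v$, the desired norm on the source side (unweighted $L^p$ or $\tau^{-1/p}L^p$) appears and the coefficient left over is exactly one of $\kappa_p$, $\tilde\kappa_p$ or a product of sup-norms. The case $p=1$ requires no Hölder and follows directly from Fubini.

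For the first pair of bounds, under \eqref{i10a} with $p>1$, I would write $Ku = \int k\cdot u\,dv'$ and apply Hölder with conjugate exponents $(p',p)$ to get $|Ku(x,v)|^p \leq \big(\int_V |k(x,v',v)|^{p/(p-1)}dv'\big)^{p-1}\int_V|u(x,v')|^p dv'$. Integrating in $v$ pulls out $\kappa_p(x)$, and then integrating in $x$ gives the claim. Under \eqref{i10b} I would instead split $|k|=|k|^{1/p}\cdot |k|^{1/p'}$, whereby Hölder yields $|Ku|^p\le \sigma_s'(x,v)^{p-1}\int_V|k(x,v',v)||u(x,v')|^p dv'$; integrating in $v$ and using Fubini turns the $v$-integration of $|k|$ into $\sigma_s(x,v')$, after which the two sup-norm hypotheses close the estimate.

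For the weighted bounds in the second pair, I would rerun each of the two Hölder arguments with an extra factor of $\tau(x,v')^{1/p}$ carried along the $v'$-integral. Precisely, under \eqref{i11a}, Hölder with the split $k\cdot u = \big(k\,\tau(x,v')^{1/p}\big)\cdot\big(u\,\tau(x,v')^{-1/p}\big)$ and conjugate exponents $(p',p)$ gives
\[
|Ku(x,v)|^p \le \Big(\int_V |k|^{p/(p-1)}\tau(x,v')^{1/(p-1)}\,dv'\Big)^{p-1}\int_V|u(x,v')|^p\tau(x,v')^{-1}dv'.
\]
Multiplying by $\tau(x,v)^{p-1}$ and integrating in $v$ produces exactly $\tilde\kappa_p(x)$ in front, and then the $x$-integration yields $\|\tilde\kappa_p\|_\infty^{1/p}\|\tau^{-1/p}u\|_{L^p}$ as required. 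Under \eqref{i11b}, I would apply the $|k|^{1/p}\cdot|k|^{1/p'}$ split as before, obtaining $\tau(x,v)^{p-1}|Ku|^p \le [\tau(x,v)\sigma_s'(x,v)]^{p-1}\int_V |k||u|^p dv'$; Fubini then gives $\int_V \tau^{p-1}|Ku|^p dv \le \|\tau\sigma_s'\|_\infty^{p-1}\int_V |u(x,v')|^p \sigma_s(x,v')\,dv'$, and writing $\sigma_s(x,v')=\tau(x,v')^{-1}\cdot\tau(x,v')\sigma_s(x,v')\le \|\tau\sigma_s\|_\infty\tau(x,v')^{-1}$ closes the loop.

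The main (minor) obstacle is choosing the Hölder weight so that the resulting exponents on $\tau(x,v')$ on the right match $\tau^{-1/p}$ and the leftover factor on $\tau(x,v)$ on the left matches $\tau^{(p-1)/p}$; the weight $\tau(x,v')^{1/p}$ works in both subcases. The borderline case $p=1$ is trivial: in \eqref{i10b} one has $\int|Ku|\,dxdv\le \int\sigma_s|u|\,dxdv'\le \|\sigma_s\|_\infty\|u\|_{L^1}$, and under \eqref{i11b} the same computation with $\sigma_s=\tau^{-1}\tau\sigma_s$ gives $\|Ku\|_{L^1}\le \|\tau\sigma_s\|_\infty\|\tau^{-1}u\|_{L^1}$, consistent with the stated constants upon noting $\|\sigma_s'\|_\infty^{0}=1$.
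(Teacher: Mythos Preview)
Your proof is correct and, for the parts under \eqref{i10a} and \eqref{i11a}, essentially identical to the paper's: apply H\"older in $v'$ (with the weight $\tau(x,v')^{1/p}$ in the second case), then integrate in $v$ to produce $\kappa_p(x)$ or $\tilde\kappa_p(x)$, and finally take the sup in $x$.

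For the parts under \eqref{i10b} and \eqref{i11b} there is a small but genuine difference in presentation. You use the direct Schur-type splitting $|k|=|k|^{1/p}|k|^{1/p'}$ and then bound $\sigma_s'^{\,p-1}$ by its sup before integrating in $v$. The paper instead works by duality: it pairs $K\phi$ with a test function $\psi\in L^{p/(p-1)}$, applies H\"older with respect to the product measure $|k|\,dv\,dv'\,dx$ on $X\times V\times V$, and reads off the two factors $\|\sigma_s\|_\infty^{1/p}$ and $\|\sigma_s'\|_\infty^{(p-1)/p}$ from the integrals $\int_V|k|\,dv=\sigma_s$ and $\int_V|k|\,dv'=\sigma_s'$ respectively. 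Both arguments yield the same constant and are standard ways of packaging the same inequality; your direct route is slightly more elementary, while the paper's duality version makes the symmetric roles of $\sigma_s$ and $\sigma_s'$ (and hence the $p\leftrightarrow p'$ symmetry) more transparent.
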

We provide a proof of Lemma \ref{lem_K} in Appendix \ref{sec:appbound}.

\begin{remark}
Assumptions \eqref{i10a} and \eqref{i10b} (resp. \eqref{i11a} and \eqref{i11b}) are not 
equivalent. Indeed we provide the following two examples.

(i) Let $k(\theta,\theta')={1\over \sqrt{|\theta-\theta'|}}$, $(\theta,\theta')\in (0,2\pi)^2$. Then $k\in L^\infty((0,2\pi)_\theta,L^1((0,2\pi)_{\theta'}))$ but  $k\not\in L^2((0,2\pi)^2)$. We can construct a scattering coefficient $k$ that satisfies \eqref{i10b} but not \eqref{i10a} for $p=2$.

(ii) Let $g\in L^2(0,2\pi)\b L^\infty(0,2\pi)$, $g>0$, and set $k(\theta,\theta')=g(\theta)g(\theta')$ for a.e. $(\theta,\theta')\in (0,2\pi)^2$. Then, $k\in L^2((0,2\pi)^2)$ but  $k\not\in L^\infty((0,2\pi)_\theta,L^1((0,2\pi)_{\theta'}))$. We can again construct a scattering coefficient $k$ that satisfies \eqref{i10a} but not \eqref{i10b} for $p=2$.

Note that estimates on the norm of the operator $K$ under assumptions \eqref{i10b} or \eqref{i11b} may also be obtained by interpolation \cite{ES-ApplAnal-93}.
\end{remark}

Condition \eqref{i10b} can be relaxed to the condition $\sigma_s\in L^\infty(X\times V)$ when $p=1$ and to the condition $\sigma_s'\in L^\infty(X\times V)$ when $p=\infty$ with the obvious convention $\|f\|_\infty^0=1$ for any measurable function $f$. Similarly condition \eqref{i11b} can be relaxed to the condition $\tau\sigma_s\in L^\infty(X\times V)$ when $p=1$ and to the condition $\tau\sigma_s'\in L^\infty(X\times V)$ when $p=\infty$.

\medskip

We recast the stationary linear Boltzmann equation as the following integral equation
\begin{equation}
(I-T_\mC^{-1}K)u=L_\mC g.
\end{equation}
We may now collect the above results to obtain the following existence and uniqueness result. 
\begin{theorem}
\label{thm_solv}
Let $1\le p\le \infty$. Under conditions \eqref{i11a} or \eqref{i11b}, further assume that
\begin{equation}
I-T_\mC^{-1}K\textrm{ is invertible in }\mathcal{L}(\tau^{1\over p}L^p(X\times V)).\label{i20a}
\end{equation}
Then, for any $f\in L^p(\mathcal{C},d\xi)$, there is a unique solution $u$ of \eqref{boltz} in $W^p(X\times V)$ and the albedo operator $A=((I-T_\mC^{-1}K)^{-1}L_\mC)_{|\Gamma\b\mC}$ is well defined and bounded from $L^p(\mC,d\xi)$ to $L^p(\Gamma\b \mC,d\xi)$.

Similarly, under conditions \eqref{i10a} or \eqref{i10b}, further assume that 
\begin{equation}
I-T_\mC^{-1}K\textrm{ is invertible in }\mathcal{L}(L^p(X\times V)).\label{i20b}
\end{equation}
Then, for any $f\in L^p(\mathcal{C},\tau d\xi)$ there is a unique solution $u$ of \eqref{boltz} in $\tilde W^p(X\times V)$, and the albedo operator $A$ is  bounded from $L^p(\mC,\tau d\xi)$ to $L^p(\Gamma\b \mC,\tau d\xi)$.
\end{theorem}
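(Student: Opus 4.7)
The strategy is to show that the PDE \eqref{boltz} with data $u=g$ on $\mC$ is equivalent to the Fredholm-type integral equation $(I-T_\mC^{-1}K)u=L_\mC g$, invoke the hypothesis to invert this equation in the right weighted $L^p$ space, and then bootstrap using the PDE itself to recover the extra regularity $v\cdot\nabla_x u$ needed to apply the trace lemma.

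First, I would establish the equivalence. Using the two identities already derived in the excerpt, namely $(v\cdot\nabla_x+\sigma)L_\mC g=0$ with $L_\mC g_{|\mC}=g$ and $(v\cdot\nabla_x+\sigma)T_\mC^{-1}f=f$ with $T_\mC^{-1}f_{|\mC}=0$, the function $w:=u-L_\mC g$ solves $(v\cdot\nabla_x+\sigma)w=Ku$ with zero trace on $\mC$, so $w=T_\mC^{-1}Ku$, i.e. $(I-T_\mC^{-1}K)u=L_\mC g$. Conversely, any solution of the integral equation satisfies $u_{|\mC}=g$ (since $T_\mC^{-1}K u$ has vanishing trace on $\mC$) and, applying $v\cdot\nabla_x+\sigma$, solves \eqref{boltz}.

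Next I would solve the integral equation. For the first part of the theorem, $g\in L^p(\mC,d\xi)$ gives $L_\mC g\in W^p\subset\tau^{1/p}L^p$ by \eqref{i1a}. Under \eqref{i11a} or \eqref{i11b}, Lemma \ref{lem_K} yields $K:\tau^{1/p}L^p\to\tau^{-(p-1)/p}L^p$ boundedly, and \eqref{i2} with $\ep=1/p$ gives $T_\mC^{-1}:\tau^{-(p-1)/p}L^p\to\tau^{1/p}L^p$; so $T_\mC^{-1}K$ is bounded on $\tau^{1/p}L^p$, and the invertibility hypothesis \eqref{i20a} produces a unique $u=(I-T_\mC^{-1}K)^{-1}L_\mC g\in\tau^{1/p}L^p$. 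For the second part, $g\in L^p(\mC,\tau d\xi)$ gives $L_\mC g\in\tilde W^p\subset L^p$ by \eqref{i1b}; under \eqref{i10a} or \eqref{i10b}, $K$ is bounded on $L^p$, and since $\tau\in L^\infty$ (because $V$ excludes a vicinity of $0$ and $X$ is bounded) \eqref{i2} with $\ep=0$ gives $T_\mC^{-1}$ bounded on $L^p$; hypothesis \eqref{i20b} then produces a unique $u\in L^p$.

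The last step is the regularity bootstrap: since $u$ satisfies the PDE, $v\cdot\nabla_x u=Ku-\sigma u$, and it remains to measure this in the correct weighted space. In the first case, $\tau^{1-1/p}Ku\in L^p$ by the $K$-bound and $\tau^{1-1/p}\sigma u=(\tau\sigma)(\tau^{-1/p}u)\in L^p$ because $\tau\sigma\in L^\infty$ and $u\in\tau^{1/p}L^p$, hence $u\in W^p$; in the second case $\tau Ku\in L^p$ because $\tau\in L^\infty$ and $Ku\in L^p$, while $\tau\sigma u\in L^p$ as before, hence $u\in\tilde W^p$. Applying the trace lemma to $\mG=\Gamma\backslash\mC$ yields $u_{|\Gamma\backslash\mC}\in L^p(\Gamma\backslash\mC,d\xi)$ (resp.\ $L^p(\Gamma\backslash\mC,\tau d\xi)$), and the albedo operator $A$ is bounded as a composition of bounded maps: the trace, times $(I-T_\mC^{-1}K)^{-1}$, times $L_\mC$. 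The part I expect to require the most care is aligning the weight exponents so that the three ingredients ($L_\mC$-bound, $K$-bound, $T_\mC^{-1}$-bound) compose on the same space, and verifying that the a priori weaker class in which $u$ is found via Fredholm inversion is automatically upgraded to $W^p$ (resp.\ $\tilde W^p$) by the identity $v\cdot\nabla_x u=Ku-\sigma u$; everything else is essentially a bookkeeping of the estimates in Lemmas \ref{lem_bound1}--\ref{lem_K}.
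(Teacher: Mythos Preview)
Your proposal is correct and follows essentially the same approach as the paper: reduce the boundary-value problem to the integral equation $(I-T_\mC^{-1}K)u=L_\mC g$, invert in the appropriate weighted $L^p$ space, and then use the identity $v\cdot\nabla_x u=Ku-\sigma u$ together with the $K$-bound from Lemma~\ref{lem_K} and $\tau\sigma\in L^\infty$ to upgrade $u$ to $W^p$ (resp.\ $\tilde W^p$) and apply the trace lemma. Your write-up is in fact more explicit than the paper's, spelling out the equivalence in both directions (hence uniqueness) and the precise weight-matching in the composition $T_\mC^{-1}\circ K$.
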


\begin{proof}
Assume \eqref{i20a}.
Let $f\in L^p(\mC, d\xi)$. Then set $u=(I-T_\mC^{-1}K)^{-1}L_\mC f\in \tau^{1\over p}L^p(X\times V)$. We have 
$$u=T_\mC^{-1}Ku+L_\mC f.$$
Hence we have 
$v\cdot \nabla_x u=\sigma u+Ku$
and we obtain that $u\in W^p(X\times V)$ and 
$$
\|\tau^{1-{1\over p}}v\cdot \nabla_x u\|_{L^p}\le \big(\|\tau\sigma\|_\infty+\|K\|_{\mathcal{L}(\tau^{1\over p}L^p(X\times V)), \tau^{-1+{1\over p}}L^p(X\times V))}\big)\|\tau^{-{1\over p}}u\|_{L^p}.
$$
We perform the analysis for $f\in L^p(\mC,\tau d\xi)$ similarly.
\end{proof}

We verify that \eqref{i20a} and \eqref{i20b} hold as soon as the scattering coefficient $k$ is sufficiently small. 
Let $1\le p<\infty$ and assume that $K$ defines a bounded operator in $L^p(X\times V)$. Then $I-T_\mC^{-1}K$ is invertible in $\tau^{1\over p}L^p(X\times V)$ when $I-T_\mC^{-1}K$ is invertible in $L^p(X\times V)$. This statement follows from the fact that $\tau^{1\over p}L^p(X\times V)$ is a {\em subspace}
of $L^p(X\times V)$ and that $T^{-1}_\mC$ maps $L^p(X\times V)$ to $\tau L^p(X\times V)$.
Then we have the quantitative result:
\begin{lemma}
Assume that
\begin{equation}
\big\|\kappa_p(x)\|_{L^\infty(X)}^{1\over p}< {e^{-\|\tau\sigma\|_\infty}\over \|\tau\|_\infty}\label{i70}
\end{equation}
when $p>1$ or 
\begin{equation}
\|\sigma_s\|_\infty< {e^{-\|\tau\sigma\|_\infty}\over \|\tau\|_\infty}\label{i70b}
\end{equation}
when $p=1$.
Then condition \eqref{i20b} (and \eqref{i20a}) holds. 
\end{lemma}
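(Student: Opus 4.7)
The plan is to show that $T_\mC^{-1}K$ is a strict contraction on $L^p(X\times V)$ under the stated hypotheses; invertibility of $I-T_\mC^{-1}K$ in $\mathcal L(L^p(X\times V))$ then follows from the Neumann series, which is \eqref{i20b}, and the transfer to $\tau^{1/p}L^p(X\times V)$ explained just before the lemma yields \eqref{i20a}.

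The first step is to bound $T_\mC^{-1}$ as an operator on $L^p(X\times V)$. Applying \eqref{i2} with $\ep=0$ gives
\begin{equation*}
\|T_\mC^{-1}h\|_{L^p}\le e^{\|\tau\sigma\|_\infty}\|\tau h\|_{L^p}\le e^{\|\tau\sigma\|_\infty}\|\tau\|_\infty\,\|h\|_{L^p},
\end{equation*}
where I use that $X$ bounded forces $\|\tau\|_\infty<\infty$.

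The second step is to bound $K$ on $L^p(X\times V)$. For $p>1$, assumption \eqref{i70} is a hypothesis on $\kappa_p$, so condition \eqref{i10a} holds and Lemma \ref{lem_K} gives $\|K\|_{\mathcal L(L^p)}\le\|\kappa_p\|_\infty^{1/p}$. For $p=1$, assumption \eqref{i70b} provides $\sigma_s\in L^\infty(X\times V)$, which by the relaxed form of \eqref{i10b} noted after Lemma \ref{lem_K} yields $\|K\|_{\mathcal L(L^1)}\le\|\sigma_s\|_\infty$ (using the convention $\|\sigma_s'\|_\infty^0=1$).

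Composing the two bounds gives
\begin{equation*}
\|T_\mC^{-1}K\|_{\mathcal L(L^p(X\times V))}\le e^{\|\tau\sigma\|_\infty}\|\tau\|_\infty\cdot\begin{cases}\|\kappa_p\|_\infty^{1/p},& p>1,\\ \|\sigma_s\|_\infty,& p=1,\end{cases}
\end{equation*}
and in both cases \eqref{i70}--\eqref{i70b} make the right-hand side strictly less than $1$. Hence $I-T_\mC^{-1}K=\sum_{n\ge0}(T_\mC^{-1}K)^n$ converges in $\mathcal L(L^p(X\times V))$, establishing \eqref{i20b}. Finally, the paragraph preceding the lemma shows that invertibility in $L^p(X\times V)$ automatically implies invertibility in the subspace $\tau^{1/p}L^p(X\times V)$ (since $T_\mC^{-1}K$ maps $L^p$ into $\tau L^p\subset\tau^{1/p}L^p$), which is \eqref{i20a}. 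There is no real obstacle here: the entire argument is a direct assembly of the operator norm estimates already proved in Lemmas \ref{lem_bound1} and \ref{lem_K}, and the only point to watch is the use of the relaxed form of \eqref{i10b} in the $p=1$ case.
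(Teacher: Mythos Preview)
Your proof is correct and follows the same route as the paper: bound $T_\mC^{-1}$ via \eqref{i2} with $\ep=0$, bound $K$ via Lemma~\ref{lem_K} (using the relaxed $p=1$ condition), compose to get $\|T_\mC^{-1}K\|_{\mathcal L(L^p)}<1$, and invoke the Neumann series for \eqref{i20b}; the passage to \eqref{i20a} via the preceding paragraph is exactly how the paper handles it as well.
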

\begin{proof}
By \eqref{i2}, \eqref{i10c} and \eqref{i70}, it follows that
$
\|T_\mC^{-1}K\|_{{\mathcal L}(L^p(X\times V))}<1,
$ 
which proves \eqref{i20b}. When $p=1$ we recall that condition \eqref{i10b} can be relaxed by the condition $\sigma_s\in L^\infty(X\times V)$  and then \eqref{i2}, \eqref{i10c} and \eqref{i70b} give 
$\|T_\mC^{-1}K\|_{{\mathcal L}(L^p(X\times V))}<1$.
\end{proof}
This confirms the intuition that scattering may be seen as a perturbation to the (scattering-)free transport equation and provides existence and uniqueness results for the transport equation under a smallness condition. More general results will be obtained below under the assumption that $T_\mC^{-1}K$ is compact as a standard application of the theory of Fredholm operators (with vanishing index). We refer to, e.g., \cite{dlen6}, \cite{ES-ApplAnal-93} for other sufficient conditions in the setting of boundary conditions imposed on $\Gamma_-$.


Before considering (standard) compactness results, we show that backward equations may be solved under conditions on the forward operators as follows. The dual operators $K^*$ and $(T^{-1}_\mC)^*$ for the usual inner products are given by 
\begin{equation}
K^*f(x,v)=\int_V k(x,v,v')f(x,v')dv',\ f\in L^{p\over p-1}(X\times V)
\end{equation}
and
\begin{eqnarray*}
{T^{-1}_{\mC}}^*\phi(x,v)=
\left\lbrace
\begin{array}{l}
\int_0^{\tau_+(x,v)}E_-(x+tv,v,t)\phi(x+tv,v)dt,\ (x-\tau_-(x,v)v,v)\in \mC_-,\\
-\int_0^{\tau_-(x,v)}E_+(x-tv,v,t)\phi(x-tv,v)dt,\ (x+\tau_+(x,v)v,v)\in \mC_+.
\end{array}
\right.
\end{eqnarray*}
We find
\begin{equation*}
v\cdot\nabla_x (T^{-1}_\mC)^*\phi(x,v)=\sigma(x,v)(T^{-1}_\mC)^*\phi(x,v)-\phi(x,v),
\end{equation*}
and the identity
$
\big((T^{-1}_\mC)^*\big)_{|\Gamma\b\mC}=0.
$
The operator $T_\mC^{-1}$ is bounded from $\tau^{\ep-1} L^p(X\times V)$ to $\tau^\ep L^p(X\times V)$ and, hence, $\big(T_\mC^{-1}\big)^*$ is bounded from  $\tau^{-\ep} L^{p\over p-1}(X\times V)$ to $\tau^{1-\ep} L^{p\over p-1}(X\times V)$.
We consider the following backward equation
\begin{eqnarray}
&&v\cdot \nabla_x u(x,v)-\sigma(x,v) u(x,v)=-K^*u(x,v),\label{boltzback}\\
&&u_{|\Gamma\b\mC}=g.
\end{eqnarray}
We may recast it as the following integral equation 
\begin{equation}
(I-(T_\mC^{-1})^*K^*)u=L_{\Gamma\b\mC,-\sigma}g.
\end{equation}
Here $L_{\Gamma\b\mC,-\sigma}$ denotes the operator ``$L_{\mC}$'' related to ``$(\mC,\sigma)$" given by $(\Gamma\b\mC,-\sigma)$.

\begin{theorem}
Assume that \eqref{i20b}  holds. Then 
$$
I-(T_\mC^{-1})^*K^* \textrm{ is invertible in }\mathcal{L}(L^{p\over p-1}(X\times V))
$$
and the backward equation is well-posed in $\tilde W^{p\over p-1}(X\times V)$ and 
the albedo operator $A_{\rm back}\in \mathcal{L}(L^{p\over p-1}(\Gamma\b\mC, \tau d\xi),$ 
$L^{p\over p-1}(\mC, \tau d\xi))$ is bounded. In addition the following Green's formula is valid
\begin{equation}
\int_{\Gamma_+\b\mC_+}\psi (A\phi)\tau d\xi-\int_{\Gamma_-\b\mC_-}\psi (A\phi)\tau d\xi
=\int_{\mC_-}\phi (A_{\rm back}\psi)\tau d\xi-\int_{\mC_+}\phi (A_{\rm back}\psi) \tau d\xi.\label{green_a}
\end{equation}
\end{theorem}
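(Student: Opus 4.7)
The plan is to treat each claim in sequence, following the template of Theorem \ref{thm_solv}. For the invertibility of $I - (T_\mC^{-1})^*K^*$ in $\mathcal{L}(L^{p/(p-1)})$, hypothesis \eqref{i20b} says $I - T_\mC^{-1}K$ is invertible in $\mathcal{L}(L^p)$; taking adjoints yields invertibility of $I - K^*(T_\mC^{-1})^*$ in $\mathcal{L}(L^{p/(p-1)})$, and the Banach algebra identity
\begin{equation*}
(I - BA)^{-1} = I + B(I - AB)^{-1}A,
\end{equation*}
applied with $A = K^*$ and $B = (T_\mC^{-1})^*$, then delivers the invertibility of $I - (T_\mC^{-1})^*K^*$ on the same space.

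For well-posedness of the backward equation and boundedness of $A_{\rm back}$, I mimic the proof of Theorem \ref{thm_solv}: I set $w = (I - (T_\mC^{-1})^*K^*)^{-1} L_{\Gamma\b\mC,-\sigma}\psi$ for $\psi \in L^{p/(p-1)}(\Gamma\b\mC, \tau d\xi)$. The operator identities $v\cdot\nabla_x (T_\mC^{-1})^*\phi = \sigma(T_\mC^{-1})^*\phi - \phi$ and $v\cdot\nabla_x L_{\Gamma\b\mC,-\sigma}\psi = -\sigma L_{\Gamma\b\mC,-\sigma}\psi$, applied to the integral equation $w = (T_\mC^{-1})^*K^*w + L_{\Gamma\b\mC,-\sigma}\psi$, give the backward PDE $v\cdot\nabla_x w - \sigma w = -K^*w$ in the distributional sense, while the trace identities $\big((T_\mC^{-1})^*\big)_{|\Gamma\b\mC}=0$ and $(L_{\Gamma\b\mC,-\sigma}\psi)_{|\Gamma\b\mC} = \psi$ yield $w_{|\Gamma\b\mC} = \psi$. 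Applying Lemma \ref{lem_bound1} (with $-\sigma$ in place of $\sigma$) and Lemma \ref{lem_K} shows $w \in \tilde W^{p/(p-1)}$; the trace lemma then gives that $A_{\rm back}\psi := w_{|\mC}$ is bounded from $L^{p/(p-1)}(\Gamma\b\mC, \tau d\xi)$ to $L^{p/(p-1)}(\mC, \tau d\xi)$.

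For Green's formula, I take $u$ the forward solution in $\tilde W^p$ produced by Theorem \ref{thm_solv} and $w$ the backward solution above; multiplying the forward equation by $w$, the backward equation by $u$, and summing, the zeroth-order $\sigma$-terms cancel and one obtains $v\cdot\nabla_x(uw) = wKu - uK^*w$. Integrating over $X\times V$, the right-hand side vanishes by the defining adjoint property of $K^*$; the divergence theorem converts the left-hand side into $\int_{\Gamma_+} uw\,d\xi - \int_{\Gamma_-} uw\,d\xi$, and splitting $\Gamma_\pm = \mC_\pm \cup (\Gamma_\pm\b\mC_\pm)$ and substituting $u_{|\mC}=\phi$, $u_{|\Gamma\b\mC}=A\phi$, $w_{|\mC}=A_{\rm back}\psi$, $w_{|\Gamma\b\mC}=\psi$ yields the claimed pairing between $A$ and $A_{\rm back}$. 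The main obstacle is making this rigorous in the tilde-space framework: the traces of $u\in \tilde W^p$ and $w\in \tilde W^{p/(p-1)}$ belong only to $L^p(\tau d\xi)$ and $L^{p/(p-1)}(\tau d\xi)$ respectively, so $uw$ need not lie in $L^1(d\xi)$ near glancing rays. This is handled by approximating $u,w$ by smooth functions for which the calculation is classical and passing to the limit, exploiting the identity $v\cdot\nabla_x\tau = 0$ so that the weight $\tau$ can be consistently moved across the transport operator to produce the integrable pairing $\tau d\xi$ in the final formula.
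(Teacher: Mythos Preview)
Your proof is correct and follows essentially the same route as the paper's. The paper phrases the Banach algebra identity as ``Jacobson's Lemma'' and, for Green's formula, applies the divergence theorem directly to $v\cdot\nabla_x(\tau u_1u_2)=\tau\,v\cdot\nabla_x(u_1u_2)$ rather than first writing the un-weighted identity and then inserting $\tau$; but this is exactly the trick you arrive at in your final paragraph, so the arguments coincide.
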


\begin{proof}
Condition \eqref{i20b} already provides that 
$$
I-K^*(T_\mC^{-1})^* \textrm{ is invertible in }\mathcal{L}(L^{p\over p-1}(X\times V))
$$
Then by Jacobson's Lemma we obtain that $I-(T_\mC^{-1})^*K^*$ is invertible in $\mathcal{L}(L^{p\over p-1}(X\times V))$ and we can repeat Thm. \ref{thm_solv} for the backward equation.
Then a simple application of Green's formula over $X\times V$ on $v\cdot \nabla_x(\tau u_1u_2)=\tau  v\cdot \nabla_x (u_1 u_2)$ where $u_1$ (resp. $u_2$) solves the  forward (resp. backward) Boltzmann equation with boundary condition $\phi$ on $\mC$ (resp. $\psi$ on $\Gamma\b\mC$).
\end{proof}
The same proof provides the following result.
\begin{theorem}
Assume that \eqref{i20a}  holds. Then 
$$
I-(T_\mC^{-1})^*K^* \textrm{ is invertible in }\mathcal{L}(\tau^{1-{1\over p}}L^{p\over p-1}(X\times V))
$$
and the backward equation is well-posed in $ W^{p\over p-1}(X\times V)$ and 
the albedo operator $A_{\rm back}\in \mathcal{L}(L^{p\over p-1}(\Gamma\b\mC, d\xi),$ 
$L^{p\over p-1}(\mC, d\xi))$ is bounded. In addition the following Green's formula is valid
\begin{equation}
\int_{\Gamma_+\b\mC_+}\psi (A\phi)d\xi-\int_{\Gamma_-\b\mC_-}\psi (A\phi)d\xi
=\int_{\mC_-}\phi (A_{\rm back}\psi)d\xi-\int_{\mC_+}\phi (A_{\rm back}\psi) d\xi.\label{green_b}
\end{equation}
\end{theorem}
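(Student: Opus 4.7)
The plan is to mirror the argument of the preceding theorem, transposed from the $\tilde W^p$ scale to the $W^p$ scale. First I would pin down the dualities. Condition \eqref{i20a} asserts invertibility of $I-T_\mC^{-1}K$ on $\tau^{1\over p}L^p(X\times V)$; this composition is well-defined there because $K$ sends $\tau^{1\over p}L^p$ to $\tau^{{1\over p}-1}L^p$ by Lemma \ref{lem_K}, while $T_\mC^{-1}$ sends $\tau^{{1\over p}-1}L^p$ back to $\tau^{1\over p}L^p$ by \eqref{i2} with $\eps={1\over p}$. Writing $q={p\over p-1}$, the $L^2$-duals are $(\tau^{1\over p}L^p)^*=\tau^{-{1\over p}}L^q$ and $(\tau^{{1\over p}-1}L^p)^*=\tau^{1-{1\over p}}L^q$, so $K^*:\tau^{1-{1\over p}}L^q\to\tau^{-{1\over p}}L^q$ and $(T_\mC^{-1})^*:\tau^{-{1\over p}}L^q\to\tau^{1-{1\over p}}L^q$, and therefore $(T_\mC^{-1})^*K^*$ acts on $\tau^{1-{1\over p}}L^q$, which is precisely the space appearing in the theorem.

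Taking adjoints of an invertible bounded operator on a Banach space preserves invertibility, so \eqref{i20a} yields that $I-K^*(T_\mC^{-1})^*$ is invertible in $\mathcal{L}(\tau^{-{1\over p}}L^q)$. Jacobson's lemma then produces invertibility of $I-(T_\mC^{-1})^*K^*$ in $\mathcal{L}(\tau^{1-{1\over p}}L^q)$ with explicit resolvent $I+(T_\mC^{-1})^*\bigl(I-K^*(T_\mC^{-1})^*\bigr)^{-1}K^*$. Given this invertibility, I would repeat the argument of Thm. \ref{thm_solv} verbatim for the backward equation: for $g\in L^q(\Gamma\b\mC,d\xi)$, set $u:=\bigl(I-(T_\mC^{-1})^*K^*\bigr)^{-1}L_{\Gamma\b\mC,-\sigma}g$; then $u$ satisfies the backward equation \eqref{boltzback} pointwise, the estimates of Lemmas \ref{lem:lift} and \ref{lem_bound1} (applied to the pair $(\Gamma\b\mC,-\sigma)$) place $v\cdot\nabla_x u$ in $\tau^{{1\over p}-1}L^p$'s dual, i.e.\ $\tau^{1-{1\over p}}L^q$, so $u\in W^q(X\times V)$, and the trace estimate in Lemma 2.1 applied to $\mG=\mC$ identifies $A_{\rm back}g=u_{|\mC}$ as a bounded operator into $L^q(\mC,d\xi)$.

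For the Green's formula, let $u_1$ solve the forward equation with datum $\phi$ on $\mC$ and $u_2$ solve the backward equation with datum $\psi$ on $\Gamma\b\mC$. The product rule and the two equations give
\[
v\cdot\nabla_x(u_1u_2)=u_2(Ku_1-\sigma u_1)+u_1(\sigma u_2-K^*u_2)=u_2Ku_1-u_1K^*u_2,
\]
whose integral over $X\times V$ vanishes by the defining identity of $K^*$. The divergence theorem on the left-hand side produces $\int_{\Gamma_+}u_1u_2\,d\xi-\int_{\Gamma_-}u_1u_2\,d\xi$, and splitting $\Gamma_\pm=\mC_\pm\cup(\Gamma_\pm\b\mC_\pm)$ together with the identifications $u_1|_\mC=\phi$, $u_1|_{\Gamma\b\mC}=A\phi$, $u_2|_{\Gamma\b\mC}=\psi$, $u_2|_\mC=A_{\rm back}\psi$ rearranges to \eqref{green_b}. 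The step I expect to require the most care is the rigorous justification of this integration by parts when $u_1\in W^p$ and only $u_2\in W^q$: by Hölder and Lemma 2.1 the product $u_1u_2$ lies in $L^1(\Gamma,d\xi)$, and I would approximate $u_1,u_2$ by functions in $C^1(\bar X\times V)$ (dense in the respective spaces by standard mollification along the characteristic flow) to pass to the limit. No weight factor $\tau$ intervenes here, because unlike the preceding theorem one does not need the identity $v\cdot\nabla_x\tau=0$ to bring the computation into $L^1$.
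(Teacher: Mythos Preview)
Your proposal is correct and follows essentially the same approach as the paper: take adjoints to obtain invertibility of $I-K^*(T_\mC^{-1})^*$ on the dual space $\tau^{-1/p}L^{p/(p-1)}$, apply Jacobson's lemma to pass to $I-(T_\mC^{-1})^*K^*$ on $\tau^{1-1/p}L^{p/(p-1)}$, then repeat Theorem~\ref{thm_solv} for the backward equation and derive the Green's identity from $v\cdot\nabla_x(u_1u_2)$. Your observation that no factor of $\tau$ is needed here (unlike the preceding theorem) is exactly the adjustment the $W^p$ scale calls for, since $u_1\in W^p$ and $u_2\in W^{p/(p-1)}$ already force $u_1u_2\in W^1$ by H\"older.
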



\bigskip

We conclude this section with a technical lemma on a gain of regularity in $L^p$-spaces of scattered components of the transport solution for a very specific geometry used in section \ref{sec:diffcontrol}. 
In that lemma, the velocity set $V$ is the unit sphere $\S^{d-1}$ and $X$ is the annulus $B(0,r_1)\b\overline{B(0,r_2)}$ where $0<r_2<r_1$.

We introduce the following norm in dimension $d\ge 3$.
Let ${\mathcal M}$ the $(3d-5)$- dimensional smooth manifold $\{(v_-,\omega,y)\in (\S^{d-1})^2\times \R^d\  |\ v_-\cdot \omega=v_-\cdot y=\omega\cdot y=0\}$ and $P_{v_-,\omega,y}$ the 2-dimensional plane passing through $y$ with directions $\omega$ and $v_-$ for $(v_-,\omega,y)\in {\mathcal M}$. We also use the notation  ${\mathcal M}_{v_-}$ for the $(2d-4)$- dimensional smooth manifold $\{(\omega,y)\in \S^{d-1}\times \R^d\  |\ (v_-,\omega,y)\in {\mathcal M}\}$ for $v_-\in \S^{d-1}$.

We denote by $\gamma_{v_-,\omega,y}$ the intersection of $P_{v_-,\omega,y}$ with the boundary $\pa X=\pa B(0,r_1)\cap \pa B(0,r_2)$. The set $\gamma_{v_-,\omega,y}$ is either a circle or the union of 2 circles for almost every $(v_-,\omega,y)\in{\mathcal M}$, and we denote by $\nu_{P_{v_-,\omega,y}}(z)$ a unit normal vector to $X\cap P_{v_-,\omega,y} $ at the boundary point $z\in \gamma_{v_-,\omega,y}$.

For a measurable bounded function $\phi$ on $\pa X\times V$, we consider the norm 
$$
\|\phi\|_{L*(\Gamma)}=\int_V \Big({\rm supess}_{(\omega,y)\in {\mathcal M}_{v_-}} \int_{\gamma_{v_-,\omega,y}} |\phi|(\gamma_{v_-,\omega,y},v_-)|\nu_{P_{v_-,\omega,y}}(\gamma_{v_-,\omega,y})\cdot v_-|d\gamma_{v_-,\omega,y} \Big)dv_-.
$$
We denote by $L_*(\Gamma)$ the completion of $L^\infty(\pa X\times V)$ under this norm. For $\phi \in L_*(\Gamma)$, we also denote by $L_{\mC}\phi$ the function obtained after applying the operator $L_{\mC}$ to the restriction of $\phi$ to the set $\mC$.

In dimension $d=2$, the space $L_*(\Gamma)$ is the usual space $L^1(\Gamma, d\xi)$. We then have the following regularity result, whose proof is postponed to Appendix \ref{sec:app1p}:
\begin{lemma}
\label{lem_1p}
Assume that $\sigma\in L^\infty(X\times V)$ and $k\in L^\infty(X\times V^2)$ where $V$ is the unit sphere $\S^{d-1}$ and $X$ is the annulus $B(0,r_1)\b\overline{B(0,r_2)}$ for some $0<r_2<r_1$.

In dimension $d=2$, the operator $\big[T_{\mC}^{-1}KL_{\mC}\Big]_{|\Gamma\b\mC}$ is bounded from $L^1(\mC,d\xi)$ to 

\noindent$L^p(\Gamma\b\mC, d\xi)$ and the operator $T_{\mC}^{-1}KL_{\mC}$ is bounded from $L^1(\mC,d\xi)$ to $L^p(X\times V)$ for $1\le p< 2$. When condition\eqref{i20a} is satisfied for some $1\le p<2$ then the operator $\big[(I-T_{\mC}^{-1}K)^{-1}T_{\mC}^{-1}KL_{\mC}\Big]_{|\Gamma\b\mC}$ is bounded from  $L^1(\mC,d\xi)$ to $L^p(\Gamma\b\mC, d\xi)$.

In dimension $d\ge 3$, the operator $\big[T_{\mC}^{-1}KL_{\mC}\Big]_{|\Gamma\b\mC}$ is bounded from $L_*(\Gamma)$ to $L^p(\Gamma\b\mC, d\xi)$ and the operator $T_{\mC}^{-1}KL_{\mC}$ is bounded from $L_*(\Gamma)$ to $L^p(X\times V)$ for $1\le p< d$. When condition \eqref{i20a} is satisfied for some $1\le p<d$, then the operator $\big[(I-T_{\mC}^{-1}K)^{-1}T_{\mC}^{-1}KL_{\mC}\Big]_{|\Gamma\b\mC}$ is bounded from  $L_*(\Gamma,d\xi)$ to $L^p(\Gamma\b\mC, d\xi)$ and the operator $\big[L_{\mC}\Big]_{|\Gamma}$ is a bounded operator from $L_*(\Gamma)$ to $L_*(\Gamma)$ with a uniform bound $1+e^{2\sqrt{r_1^2-r_2^2}\|\sigma\|_\infty}$.
\end{lemma}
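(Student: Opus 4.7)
The plan is to derive an explicit integral representation for the composition $T_\mC^{-1} K L_\mC$ and then estimate the resulting kernel, separately in dimensions $d=2$ and $d\ge 3$. Unfolding the three operators, for any $(x_0, v_0) \in X \times V$ one has
$$
T_\mC^{-1} K L_\mC g(x_0, v_0) = \pm \int_0^{\tau_\mp(x_0, v_0)} E_\mp(x_0, v_0, t) \int_V k(y(t), v', v_0) L_\mC g(y(t), v') \, dv' \, dt,
$$
with $y(t) = x_0 \mp t v_0$. The crucial step is the change of variables at fixed $y=y(t)$ from $v' \in V = \S^{d-1}$ to $z \in \pa X$, where $z$ is the unique intersection of the line through $(y, v')$ with the set $\mC$; its Jacobian is $dv' = |\nu(z)\cdot v'| \, |z - y|^{-(d-1)} d\mu(z)$. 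After this change, $T_\mC^{-1}KL_\mC g(x_0,v_0)$ is an integral of $g$ over $\mC$ weighted by a kernel involving the singular factor $|z - y(t)|^{-(d-1)}$ integrated against $dt$ along the characteristic.

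In dimension $d=2$ the singularity $|z-y(t)|^{-1}$ is mild. For $z \in \pa X$ fixed, a direct computation in polar coordinates centered at $z$ shows that the $L^p(\Gamma\b\mC, d\xi)$-norm (and the $L^p(X\times V)$-norm after one additional integration) of $(x_0, v_0) \mapsto \int_0^{\tau_\mp(x_0,v_0)} |z - y(t)|^{-1} dt$ is uniformly bounded for any $1 \le p < 2$. Minkowski's inequality then furnishes the desired $L^1(\mC, d\xi) \to L^p$ bound; this is essentially the $L^1 \to L^p$ smoothing of the Riesz potential of order $d-1$, enhanced by the additional one-dimensional averaging along characteristics, hence the threshold $p < d$.

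In dimension $d \ge 3$ the singularity $|z-y(t)|^{-(d-1)}$ is too strong for an unweighted $L^1(\mC, d\xi)$ input, which is precisely what motivates the anisotropic norm $L_*(\Gamma)$. Since the characteristics of $v\cdot \nabla_x$ lie in the 2-plane through $y(t)$ spanned by $v_0$ and $v'$, both the velocity integration in $K$ and the subsequent line integration in $T_\mC^{-1}$ act within a single 2-plane. I would decompose the kernel along the planes $P_{v_-, \omega, y}$ from the definition of $L_*(\Gamma)$: inside each such plane one falls back to the 2D kernel estimate on the 2D annulus or disk $X \cap P$, and the plane-wise $L^1 \to L^p$ bounds are collected by taking the essential supremum over $(\omega, y)$ and then an outer $L^1$-integration in $v_-$, which matches exactly the definition of $\|\cdot\|_{L_*(\Gamma)}$.

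The claim on $\big[(I - T_\mC^{-1}K)^{-1} T_\mC^{-1} K L_\mC\big]_{|\Gamma\b\mC}$ then follows from the identity $(I-T_\mC^{-1}K)^{-1} T_\mC^{-1} K = (I-T_\mC^{-1}K)^{-1} - I$: applied to $L_\mC g$, the first summand yields an element of $\tau^{1/p}L^p(X\times V)$ thanks to hypothesis \eqref{i20a} together with the previous step, and the trace lemma delivers the boundary estimate, while the subtracted $L_\mC g$ is handled directly through its explicit plane-wise structure. The uniform bound $1 + e^{2\sqrt{r_1^2 - r_2^2}\|\sigma\|_\infty}$ for $\big[L_\mC\big]_{|\Gamma}$ is seen by observing that $L_\mC \phi$ equals $\phi$ on $\mC$ and equals $\phi$ transported to the opposite endpoint times $E_\pm$ on $\Gamma\b\mC$: the endpoint map preserves the measure $|\nu(z)\cdot v_-|d\gamma$ along each circle $\gamma_{v_-,\omega,y}$ and the exponential is bounded by $e^{\|\sigma\|_\infty\cdot \sup\tau}$, with $\sup\tau\le 2\sqrt{r_1^2-r_2^2}$ the longest chord of the annulus. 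The main obstacle is the planar reduction in the $d \ge 3$ case: one must verify that the $v'$-integration in $K$ respects the 2-plane foliation underlying $L_*(\Gamma)$, and that the higher-order singularity is absorbed by the supremum over transverse planes composed with the 2D kernel estimate --- this is where the rigidity of the annular geometry, ensuring that $X \cap P$ is again a 2D annulus or disk, is essential.
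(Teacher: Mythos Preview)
Your change of variables $v'\to z$ at fixed $y(t)$ is a legitimate first step, but it does \emph{not} produce an integral of $g$ over $\mC$ with respect to $d\xi$: after the change, the integrand contains $g\bigl(z,\widehat{z-y(t)}\bigr)$, where the second argument still depends on $t$ (and on $(x_0,v_0)$ through $y(t)$). Minkowski over $z\in\partial X$ alone therefore bounds the output by $\int_{\partial X}\bigl(\sup_{v'}|g(z,v')|\bigr)\,d\mu(z)$, which is a strictly stronger norm than $\|g\|_{L^1(\mC,d\xi)}$. To recover the $L^1(\mC,d\xi)$ bound you must perform a second change of variables, say $t\mapsto v_-:=\widehat{z-y(t)}$ at fixed $z$; its Jacobian $dt=|z-y(t)|\,|v_0\cdot v_-^\perp|^{-1}\,dv_-$ \emph{cancels} the factor $|z-y(t)|^{-1}$ in $d=2$ and replaces it by the angular singularity $|v_0\cdot v_-^\perp|^{-1}$. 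It is this singularity, together with the geometric estimate $\int_{\partial X}\chi_{\{\text{lines cross}\}}(x_0)\,|\nu(x_0)\cdot v_0|\,d\mu(x_0)\lesssim |v_0\cdot v_-^\perp|$, that yields $\int_V|v_0\cdot v_-^\perp|^{1-p}\,dv_0<\infty$ and hence the sharp threshold $p<2$ (and $p<d$ in higher dimension via $\int(\cos\theta)^{d-1-p}\,d\theta$). Your heuristic ``$L^p$-norm of $(x_0,v_0)\mapsto\int_0^\tau|z-y(t)|^{-1}\,dt$ is bounded for $p<2$'' is actually bounded for \emph{all} $p<\infty$ (it is a logarithm), so it cannot be the mechanism that selects $p<2$. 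The paper's proof performs instead the single change $t\mapsto\gamma(t)\in\partial X$ at fixed $v_-$, producing the $|v_-\cdot v^\perp|^{-1}$ Jacobian directly and then closing with a H\"older splitting rather than Minkowski.

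Your treatment of the resolvent also has a gap. The identity $(I-A)^{-1}A=(I-A)^{-1}-I$ is correct, but hypothesis~\eqref{i20a} gives invertibility of $I-T_\mC^{-1}K$ only on $\tau^{1/p}L^p(X\times V)$, and $L_\mC g$ for $g\in L^1(\mC,d\xi)$ (or $g\in L_*(\Gamma)$) lies merely in $\tau L^1$, not in $\tau^{1/p}L^p$ for $p>1$. The paper circumvents this by writing $(I-A)^{-1}A=A+(I-A)^{-1}A^2$: the first term is the operator already bounded $L^1\to L^p$, while in the second term the extra factor $A=T_\mC^{-1}K$ applied to $T_\mC^{-1}KL_\mC g\in L^p$ lands in $W^p\subset\tau^{1/p}L^p$, so~\eqref{i20a} applies. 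Your outline of the $L_*(\Gamma)\to L_*(\Gamma)$ bound for $[L_\mC]_{|\Gamma}$ is correct and matches the paper.
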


\section{Albedo operator and Fredholm theory}
\label{sec:albedo} 

We now develop results on the well-posedness of the transport equation under additional regularity assumptions on the scattering coefficient. We assume below that $1<p<\infty$ and define
\begin{equation}
W^p_{\mC}(X\times V):=\{u\in L^p(X\times V)\ |\ v\cdot \nabla_x u\in L^p(X\times V),\ u_{|\mC}\in L^p(\mC,d\xi)\}.
\end{equation}
We have the following result, identifying $W^p_{\mC}(X\times V)$ with more standard spaces:
\begin{lemma}
We have 
\begin{eqnarray}
W^p_{\mC}(X\times V)&=&W^p_\pm(X\times V)\\ &:=&\{u\in L^p(X\times V)\ |\ v\cdot \nabla_x u\in L^p(X\times V),\ u_{|\Gamma_\pm}\in L^p(\Gamma_\pm,d\xi)\}.\nonumber
\end{eqnarray}
\end{lemma}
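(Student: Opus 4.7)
The plan is to show both inclusions $W^p_{\mC}\subseteq W^p_{\pm}$ and $W^p_{\pm}\subseteq W^p_{\mC}$ by exploiting the fundamental fact that along each line segment crossing $X$, the two boundary traces differ by $\int_0^{\tau}(v\cdot\nabla_x u)\,dt$ along the line. Since $\mC$ and $\Gamma\backslash\mC$ both parametrize the set of lines through $X$ (each line contributing exactly one endpoint to $\mC$ and one to $\Gamma\backslash\mC$), one trace controls the other up to the $L^p$-norm of $v\cdot\nabla_x u$, which is assumed finite in both function spaces.

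Concretely, let $\Phi(x,v)=(x+\tau_+(x,v)v,v)$ denote the bijection from $\Gamma_-$ to $\Gamma_+$ matching the two endpoints of each line. By definition of $\mC_-$, $\Phi$ restricts to a bijection $\mC_-\leftrightarrow \Gamma_+\backslash\mC_+$ and also $\Gamma_-\backslash\mC_-\leftrightarrow \mC_+$. The measure $d\xi=|\nu\cdot v|d\mu\,dv$ is preserved by $\Phi$, and the standard identity
\[
\int_{\Gamma_-}\!\!\int_0^{\tau(x,v)} f(x+tv,v)\,dt\,d\xi(x,v)=\int_{X\times V} f(y,v)\,dy\,dv
\]
holds (and similarly with $\Gamma_+$ and $-tv$). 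I will invoke these two facts as standard consequences of the geometry.

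For the inclusion $W^p_{\mC}\subseteq W^p_+$, I would fix $u\in W^p_{\mC}$ and decompose $\Gamma_+=\mC_+\cup(\Gamma_+\backslash\mC_+)$. On $\mC_+$, the trace is in $L^p$ by hypothesis. On $\Gamma_+\backslash\mC_+$, for $(x_+,v)$ in that set, set $(x_-,v)=\Phi^{-1}(x_+,v)\in\mC_-$ and write
\[
u(x_+,v)=u(x_-,v)+\int_0^{\tau(x_-,v)}(v\cdot\nabla_x u)(x_-+tv,v)\,dt.
\]
Raising to the $p$-th power, applying the elementary $|a+b|^p\le 2^{p-1}(|a|^p+|b|^p)$ and Hölder in the integral to extract a factor $\tau^{p-1}\le\|\tau\|_\infty^{p-1}$, then integrating in $d\xi$ and using the $\Phi$-invariance on the first term and the line-integral identity on the second, I obtain
\[
\|u_{|\Gamma_+\backslash\mC_+}\|_{L^p(d\xi)}^p\le 2^{p-1}\|u_{|\mC_-}\|_{L^p(d\xi)}^p+2^{p-1}\|\tau\|_\infty^{p-1}\|v\cdot\nabla_x u\|_{L^p(X\times V)}^p.
\]
Combining with the $\mC_+$ piece gives $u_{|\Gamma_+}\in L^p(\Gamma_+,d\xi)$, hence $u\in W^p_+$; the argument with $\Gamma_-$ is identical.

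For the reverse inclusion $W^p_\pm\subseteq W^p_{\mC}$, I apply the same line-integration identity, now starting from the trace in $L^p(\Gamma_\pm,d\xi)$: the piece of $\mC$ contained in $\Gamma_\pm$ is immediate by restriction, and the piece of $\mC$ in the opposite boundary is recovered via $\Phi^{\pm 1}$ and the same estimate. The single technical point to double-check is the $d\xi$-invariance of $\Phi$ and the line-integral disintegration of $dx\,dv$; once these are in hand everything is Hölder plus the triangle inequality. This step is the only mild obstacle, and it is classical (it underlies the two preceding trace lemmas), so I would simply cite it rather than reprove it.
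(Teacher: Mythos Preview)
Your argument is correct and essentially self-contained, but it differs from the paper's route. The paper does not perform the direct line-integration estimate you give; instead it invokes Cessenat's trace result \cite{C-CRAS-1985} that $W^p_-(X\times V)=W^p_+(X\times V)=\{u:\ u,\ v\cdot\nabla_x u\in L^p,\ u_{|\Gamma}\in L^p(\Gamma,d\xi)\}$. From this, the inclusion $W^p_\pm\subseteq W^p_{\mC}$ is immediate by restriction. For the reverse inclusion, the paper introduces the characteristic functions $\chi_\pm$ (constant along each line) according to whether the line meets $\mC_+$ or $\mC_-$, observes that $v\cdot\nabla_x(\chi_\pm u)=\chi_\pm v\cdot\nabla_x u$ and that $(\chi_\pm u)_{|\Gamma_\pm}$ equals $u_{|\mC_\pm}$ on $\mC_\pm$ and vanishes elsewhere, so $\chi_\pm u\in W^p_\pm$; then Cessenat's identity $W^p_+=W^p_-$ allows summing to get $u\in W^p_-$. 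What your approach buys is independence from the cited literature: you are in effect reproving Cessenat's equality along the way, at the cost of having to justify the fundamental-theorem-of-calculus identity along lines for non-smooth $u$ (which you correctly flag as the one technical point). What the paper's approach buys is brevity: once Cessenat is taken as a black box, the $\chi_\pm$ decomposition avoids any explicit estimate on $\Gamma_+\backslash\mC_+$.
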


\begin{proof}
According to \cite{C-CRAS-1985}, we have that $W^p_-(X\times V)=W^p_+(X\times V)=\{u\in L^p(X\times V)\ |\ v\cdot \nabla_x u\in L^p(X\times V),\ u_{|\Gamma}\in L^p(\Gamma,d\xi)\}$. Hence we have $W^p_-(X\times V)\subseteq W^p_{\mC}(X\times V)$.
We now prove $W^p_{\mC}(X\times V)\subseteq W^p_-(X\times V)$. Let $u\in W^p_{\mC}(X\times V)$ and consider the functions $\chi_\pm$ defined by 
$$
\chi_{\pm}(x,v)=
\left\lbrace
\begin{array}{l}
1\ \textrm{ when}\ (x\pm\tau_\pm(x,v)v,v)\in \mC_\pm,\\
0\ \textrm{otherwise.}
\end{array}
\right.
$$
Note that 
$
\chi_++\chi_-=1
$
and we have 
$$
v\cdot \nabla_x (\chi_\pm u)=\chi_\pm v\cdot \nabla_x u,\ (\chi_{\pm}u)_{|\Gamma_\pm\b\mC_\pm}=0,\ (\chi_{\pm}u)_{|\mC_\pm}=u_{|\mC_\pm}.
$$   
Thus, $\chi_\pm u\in W^p_-(X\times V)$ and 
$
u=\chi_+u+\chi_-u\in W^p_-(X\times V).
$
\end{proof}
\begin{lemma}
The operator $T^{-1}_\mC$ is bounded from $L^p(X\times V)$ to $W^p_-(X\times V)$.
\end{lemma}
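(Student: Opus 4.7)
The plan is to identify $W^p_-(X\times V)$ with $W^p_{\mC}(X\times V)$ using the previous lemma, and then verify the three defining conditions of $W^p_{\mC}$ directly from the estimates in Lemma \ref{lem_bound1}. A key preliminary observation is that since $X$ is bounded and $V$ excludes a vicinity of $v=0$, the travel time $\tau$ is bounded, $\tau\in L^\infty(X\times V)$; this will convert the $\tau$-weighted bounds \eqref{i2} into ordinary $L^p$ bounds.

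First, I would apply \eqref{i2} with $\ep=0$ to get
\begin{equation*}
\|T^{-1}_{\mC}f\|_{L^p}\le e^{\|\tau\sigma\|_\infty}\|\tau f\|_{L^p}\le e^{\|\tau\sigma\|_\infty}\|\tau\|_\infty\|f\|_{L^p},
\end{equation*}
so $T^{-1}_{\mC}f\in L^p(X\times V)$ with the appropriate continuity in $f$.

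Next, from the identity $v\cdot\nabla_x T^{-1}_{\mC}f+\sigma T^{-1}_{\mC}f=f$ stated before Lemma \ref{lem_bound1}, one has $v\cdot\nabla_x T^{-1}_{\mC}f=f-\sigma T^{-1}_{\mC}f$. Applying \eqref{i2} this time with $\ep=1$ yields $\|\tau^{-1}T^{-1}_{\mC}f\|_{L^p}\le e^{\|\tau\sigma\|_\infty}\|f\|_{L^p}$, so that
\begin{equation*}
\|\sigma T^{-1}_{\mC}f\|_{L^p}\le \|\tau\sigma\|_\infty\,\|\tau^{-1}T^{-1}_{\mC}f\|_{L^p}\le \|\tau\sigma\|_\infty e^{\|\tau\sigma\|_\infty}\|f\|_{L^p},
\end{equation*}
giving $v\cdot\nabla_x T^{-1}_{\mC}f\in L^p(X\times V)$ together with the corresponding operator bound.

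Finally, the boundary-trace condition is trivial by construction: at any point of $\mC_\pm$ the corresponding travel time $\tau_\mp$ vanishes, so the defining integral for $T^{-1}_{\mC}f$ collapses and $(T^{-1}_{\mC}f)_{|\mC}=0$, which certainly lies in $L^p(\mC,d\xi)$. Combining the three pieces places $T^{-1}_{\mC}f$ in $W^p_{\mC}(X\times V)=W^p_-(X\times V)$ with norm controlled by $\|f\|_{L^p}$. I do not anticipate a serious obstacle here, as all the analytic work has been done in Lemma \ref{lem_bound1} and in the previous identification lemma; the only point that must be handled with a little care is ensuring the weighted estimates from \eqref{i2} actually produce the unweighted norms needed for $W^p_{\mC}$, which is where the boundedness of $\tau$ is used.
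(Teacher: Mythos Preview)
Your proof is correct and follows essentially the same approach as the paper: both use the identity $v\cdot\nabla_x T^{-1}_{\mC}f+\sigma T^{-1}_{\mC}f=f$, the estimate \eqref{i2} (with $\ep=0$ and $\ep=1$) together with $\tau\in L^\infty$, the vanishing trace $(T^{-1}_{\mC}f)_{|\mC}=0$, and the identification $W^p_{\mC}=W^p_-$ from the preceding lemma. Your write-up is in fact slightly more explicit than the paper's about how the bound on $\|\sigma T^{-1}_{\mC}f\|_{L^p}$ arises from \eqref{i2} with $\ep=1$.
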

\begin{proof}
We start with $u\in L^p(X\times V)$. Then $T^{-1}_\mC u$ has the following properties
$$
v\cdot\nabla_x T^{-1}_\mC u+\sigma T^{-1}_\mC u=u,
$$
and
\begin{eqnarray*}
\|\sigma T^{-1}_\mC u\|_{L^p(X\times V)}&\le& \|\tau\sigma\|_\infty e^{\|\tau\sigma\|_\infty}\|u\|_{L^p},\\
\ \|T^{-1}_\mC u\|_{L^p(X\times V)}&\le & e^{\|\tau\sigma\|_\infty}\|\tau u\|_{L^p}\le \|\tau\|_\infty e^{\|\tau\sigma\|_\infty}\|u\|_{L^p},
\end{eqnarray*}
and $(T^{-1}_\mC)_{|\mC} u=0$.
Thus, $T^{-1}_\mC$ is a bounded operator from $L^p(X\times V)$ to $W^p_\mC(X\times V)=W^p_-(X\times V)$.
\end{proof}

We are now ready to state several results on the compactness of the operator $T_\mC^{-1}K$; see also \cite[Chapter 4]{MK}. We first recall the 
\begin{lemma}[\cite{GLPS-JFA-88}]
The bounded operator $M$ from $W^p_-(X\times V)$ to $L^p(X)$ defined by
$$
Mu(x,v)=\int_V u(x,v) dv,\ u\in W^p_-(X\times V),
$$
is compact.
\end{lemma}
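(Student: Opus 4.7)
The plan is to combine a classical velocity averaging lemma with the Rellich--Kondrachov theorem. The heuristic is that although a bounded sequence $(u_n)\subset W^p_-(X\times V)$ has only $L^p$ regularity in $x$ pointwise in $v$, the velocity average $Mu_n$ gains a fractional spatial derivative because $v\cdot\nabla_x u_n$ is also in $L^p$. That extra smoothness, combined with the boundedness of $X$, gives the required compactness.

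First I would extend each $u_n$ by zero outside $X$, producing $\tilde u_n\in L^p(\Rm^d\times V)$ supported in $\bar X\times V$. A direct integration by parts gives, in the sense of distributions on $\Rm^d\times V$,
\begin{equation*}
v\cdot\nabla_x\tilde u_n \;=\; \widetilde{v\cdot\nabla_x u_n} \;-\; (v\cdot\nu(x))\bigl[(u_n)_{|\Gamma_-}-(u_n)_{|\Gamma_+}\bigr]\,\delta_{\pa X}.
\end{equation*}
By the identification $W^p_{\mC}=W^p_\pm$ proved just above, both traces $(u_n)_{|\Gamma_\pm}$ lie in $L^p(\Gamma_\pm,d\xi)$ with norms controlled by $\|u_n\|_{W^p_-}$, so the right-hand side is bounded in the sum $L^p(\Rm^d\times V)+\mu$, where $\mu$ is a controlled boundary measure. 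To avoid dealing with singular sources directly, the cleaner route is to subtract the lift: write $u_n = L_\mC(u_n)_{|\mC}+w_n$ where, by Lemma \ref{lem:lift}, $L_\mC(u_n)_{|\mC}\in W^p$ is bounded, and $w_n$ satisfies a pure transport equation with $L^p$ source and vanishing trace on $\mC$; the extension by zero of $w_n$ then has $v\cdot\nabla_x$ in $L^p(\Rm^d\times V)$ with no boundary contribution.

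Next I would apply the $L^p$ velocity averaging theorem of Golse--Lions--Perthame--Sentis: when $f$ and $v\cdot\nabla_x f$ both lie in $L^p(\Rm^d\times V)$ and $V$ satisfies the standard non-degeneracy hypothesis (verified both for the bounded open sets and for the sphere $\S^{d-1}$ considered here), the average $x\mapsto\int_V f(x,v)\,dv$ belongs to a fractional Sobolev space $W^{s,p}(\Rm^d)$ for some $s>0$. Applied to the extension of $w_n$, this yields a uniform bound on $Mw_n$ in $W^{s,p}(\Rm^d)$ supported in $\bar X$, hence a strongly convergent subsequence in $L^p(X)$ by Rellich--Kondrachov. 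It remains to show compactness of $u_n\mapsto M\bigl(L_\mC (u_n)_{|\mC}\bigr)$; but this operator factors through a weighted X-ray-type integral of the traces, whose smoothing properties on $L^p(\mC,d\xi)$ again produce compactness in $L^p(X)$.

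The main obstacle is the boundary term generated by extension by zero: one must be careful that the source term fed into the averaging lemma is genuinely in $L^p$, which is why the lift-subtraction step is essential. A secondary subtlety is that the averaging lemma is most classically stated for $V$ an open set in $\Rm^d$; in the sphere case the same conclusion holds but the non-degeneracy has to be checked for $V=\S^{d-1}$, which is indeed the setting treated in \cite{GLPS-JFA-88} so no new work is needed.
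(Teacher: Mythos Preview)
The paper does not prove this lemma at all: it is stated with the citation \cite{GLPS-JFA-88} and used as a black box, so there is no ``paper's own proof'' to compare against. Your sketch is therefore not redundant, and the overall strategy (velocity averaging gives fractional Sobolev regularity of $Mu$, then Rellich--Kondrachov on the bounded domain $X$) is the standard route.

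That said, there is a genuine gap in your boundary handling. After writing $u_n=L_\mC(u_n)_{|\mC}+w_n$, the remainder $w_n$ vanishes only on $\mC$, not on all of $\Gamma$. If you take $\mC=\Gamma_-$, then $w_n$ still has a nontrivial trace on $\Gamma_+$, and extending $w_n$ by zero to $\Rm^d\times V$ produces exactly the singular layer $(v\cdot\nu)\,(w_n)_{|\Gamma_+}\,\delta_{\pa X}$ that you were trying to avoid. So the sentence ``the extension by zero of $w_n$ then has $v\cdot\nabla_x$ in $L^p(\Rm^d\times V)$ with no boundary contribution'' is not correct as written. One fix is to localise: multiply by a smooth cutoff $\chi_\eps$ supported in $X$ and equal to $1$ on $\{x:\d(x,\pa X)>\eps\}$, apply the whole-space averaging lemma to $\chi_\eps u_n$ (whose transport derivative is genuinely in $L^p$), and then control the remaining strip of width $\eps$ in $L^p$ uniformly in $n$ using the $W^p_-$ bound; letting $\eps\to0$ along a diagonal subsequence gives the compactness. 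Alternatively, one can invoke the bounded-domain versions of the averaging lemma directly (as in \cite{GLPS-JFA-88} or \cite[Chapter~4]{MK}), which already absorb the trace terms.

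Your final step, compactness of $g\mapsto M(L_\mC g)$ from $L^p(\mC,d\xi)$ to $L^p(X)$, is asserted but not argued; ``smoothing properties of an X-ray-type integral'' is the right intuition, but you would need to exhibit the kernel and check it is compact (e.g.\ Hilbert--Schmidt when $p=2$, or via a similar regularity gain for general $p$). Since the paper treats the lemma as a citation, none of this is needed for the paper's purposes, but if you intend to supply a self-contained proof these two points should be filled in.
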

\begin{lemma}
Let $1<p<\infty$. Assume that
$$
k\in C\big(\bar X, L^p(V_v,L^{p\over p-1}(V_{v'}))\big).
$$
The bounded operator $K$ from $W^p_-(X\times V)$ to $L^p(X\times V)$ is compact.
\end{lemma}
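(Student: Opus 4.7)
The plan is to establish compactness by combining an approximation of $k$ by finite-rank kernels with a suitable generalization of the velocity averaging result for the average operator $M$ already stated.

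First I would record the easy boundedness: Hölder's inequality at fixed $x$ gives
\begin{equation*}
\|Ku(x,\cdot)\|_{L^p(V)} \le \|k(x,\cdot,\cdot)\|_{L^p(V_v, L^{p/(p-1)}(V_{v'}))} \, \|u(x,\cdot)\|_{L^p(V)},
\end{equation*}
so that
\begin{equation*}
\|K\|_{\mathcal L(L^p(X\times V))}\le \sup_{x\in\bar X}\|k(x,\cdot,\cdot)\|_{L^p(V_v, L^{p/(p-1)}(V_{v'}))},
\end{equation*}
and in particular $K$ is bounded from $W^p_-(X\times V)$ to $L^p(X\times V)$.

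The second step is to upgrade the velocity-averaging lemma: for any $\phi\in L^{p/(p-1)}(V)$, the weighted average
\begin{equation*}
M_\phi u(x):=\int_V\phi(v)u(x,v)\,dv
\end{equation*}
defines a compact operator from $W^p_-(X\times V)$ to $L^p(X)$. For $\phi\in L^\infty(V)$ this is immediate from the previous lemma on $M$: multiplication by $\phi$ depends only on $v$, so $v\cdot\nabla_x(\phi u)=\phi\, v\cdot\nabla_x u$ and the trace satisfies $(\phi u)|_{\Gamma_-}=\phi\,u|_{\Gamma_-}$, whence the map $u\mapsto\phi u$ is bounded on $W^p_-$, and $M_\phi u=M(\phi u)$ is compact as the composition of a bounded and a compact operator. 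For general $\phi\in L^{p/(p-1)}(V)$, approximate $\phi$ in $L^{p/(p-1)}$ by bounded $\phi_n$; by Hölder,
\begin{equation*}
\|M_\phi-M_{\phi_n}\|_{\mathcal L(L^p(X\times V),L^p(X))}\le |V|^{0}\,\|\phi-\phi_n\|_{L^{p/(p-1)}(V)}\to 0,
\end{equation*}
and since $W^p_-\hookrightarrow L^p(X\times V)$ continuously, $M_\phi$ is a norm limit of compact operators from $W^p_-$ to $L^p(X)$ and thus compact.

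The third step is a density argument on the kernel. Using that simple tensors $b(v)c(v')$ with $b\in L^p(V)$, $c\in L^{p/(p-1)}(V)$ span a dense subspace of $Y:=L^p(V_v,L^{p/(p-1)}(V_{v'}))$, together with a partition-of-unity argument on the compact set $\bar X$ (valued in the Banach space $Y$), one approximates $k$ uniformly on $\bar X$ by finite sums
\begin{equation*}
k_n(x,v',v)=\sum_{i=1}^{N_n} a_i^n(x)\,b_i^n(v)\,c_i^n(v'),
\end{equation*}
with $a_i^n\in C(\bar X)$, $b_i^n\in L^p(V)$ and $c_i^n\in L^{p/(p-1)}(V)$, and $\|k-k_n\|_{C(\bar X,Y)}\to 0$. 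The corresponding operator
\begin{equation*}
K_n u(x,v)=\sum_{i=1}^{N_n} a_i^n(x)\,b_i^n(v)\,M_{c_i^n}u(x)
\end{equation*}
is compact from $W^p_-$ to $L^p(X\times V)$: each $M_{c_i^n}$ is compact by Step 2, and multiplication by $a_i^n(x)b_i^n(v)$ is bounded from $L^p(X)$ to $L^p(X\times V)$ since $a_i^n$ is bounded and $b_i^n\in L^p(V)$. Finally, the bound from Step 1 applied to $k-k_n$ gives $\|K-K_n\|_{\mathcal L(L^p(X\times V))}\to 0$, hence also in $\mathcal L(W^p_-,L^p(X\times V))$; so $K$ is a norm limit of compact operators, and therefore compact.

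I expect the one nontrivial point to be Step 2, namely the extension of the velocity averaging lemma from $M$ (unweighted) to $M_\phi$ with $\phi\in L^{p/(p-1)}(V)$; the rest is bookkeeping via tensor-product density and a Hölder estimate on the kernel difference.
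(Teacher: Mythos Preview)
Your proof is correct and follows essentially the same strategy as the paper: approximate $k$ by separable kernels, use velocity averaging to get compactness of each summand, and pass to the limit via the operator bound on $K-K_n$. The only minor difference is that the paper approximates by $C^1$ kernels of the form $\sum_j k_{n,j}(x,v)\phi_{n,j}(v')$ with continuous (hence bounded) $\phi_{n,j}$, so that multiplication by $\phi_{n,j}$ is trivially bounded on $W^p_-$ and your Step~2 extension to weights in $L^{p/(p-1)}(V)$ is not needed; your triple-tensor decomposition with $c_i^n\in L^{p/(p-1)}(V)$ is slightly more general but requires the extra approximation argument in Step~2, which you carry out correctly.
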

\begin{proof}
By density of $C^1(\bar X\times V\times V)$ in $C\big(\bar X, L^p(V_v,L^{p\over p-1}(V_{v'}))\big)$ there exists a sequence $(k_n)\in C^1(\bar X\times V\times V)$ so that
\begin{equation}
\lim_{n\to +\infty}k_n=k\textrm{ in }C\big(\bar X, L^p(V_v,L^{p\over p-1}(V_{v'}))\big),\label{i30a}
\end{equation}
and 
\begin{equation}
k_n(x,v',v)=\sum_{j=0}^n k_{n,j}(x,v)\phi_{n,j}(v'),\ (x,v',v)\in \bar X\times V\times V,\label{i30b}
\end{equation}
for each $n\in\N$ and some functions $(k_{n,j})_{0\le j\le n}\in C(\bar X\times V)^{n+1}$, $(\phi_{n,j})_{0\le j\le n}\in C(V)^{n+1}$.
We define the operators
\begin{eqnarray}
K_n f(x,v)=\int_V k_n(x,v',v)f(x,v')dv',\ f\in L^p(X\times V).\label{i30c}
\end{eqnarray}
We have 
\begin{equation*}
K_n f(x,v)=\sum_{j=0}^n  k_{n,j}(x,v)M(\phi_{n,j}f)(x),\textrm{ a.e. }(x,v)\in X\times V;\ f\in L^p(X\times V).
\end{equation*}
The multiplication operator by the function  $\phi_{n,j}$ (that we still denote by $\phi_{n,j}$) is bounded in $W^p_-(X\times V)$.
Then $M(\phi_{n,j} .)$ is a compact operator from $W^p_-(X\times V)$ to $L^p(X\times V)$. Then multiplication operators by the functions  $k_{n,j}$ (that we still denote by $k_{n,j}$) is bounded in $L^p(X\times V)$. Therefore $k_{n,j}M(\phi_{n,j} .)$  is a compact operator from $W^p_-(X\times V)$ to $L^p(X\times V)$. Hence we obtain that $K_n$ is a compact operator from $W^p_-(X\times V)$ to $L^p(X\times V)$.

The limit \eqref{i30a} shows that $K$ is the uniform limit of $(K_n)$ as bounded operators in $L^p(X\times V)$. Therefore $K$ is also a compact operator from $W^p_-(X\times V)$ to $L^p(X\times V)$.
\end{proof}

Our main compactness results is then:
\begin{proposition}
Let $1<p<\infty$. Assume that
\begin{equation}
k\in C\big(\bar X, L^p(V_v,L^{p\over p-1}(V_{v'}))\big).\label{hyp_comp}
\end{equation}
Then the operator $KT^{-1}_\mC$ is a compact operator in $L^p(X\times V)$.
\end{proposition}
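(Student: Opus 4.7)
The plan is to factor the operator $KT^{-1}_\mC$ through the space $W^p_-(X\times V)$ and combine the two previous lemmas. Specifically, the immediately preceding lemma shows that
\[
T^{-1}_\mC : L^p(X\times V) \longrightarrow W^p_-(X\times V)
\]
is bounded (with norm controlled by $\|\tau\|_\infty$, $\|\tau\sigma\|_\infty$ and the $L^p$-bound on $T^{-1}_\mC u$, its derivative $v\cdot\nabla_x T^{-1}_\mC u = u - \sigma T^{-1}_\mC u$, and its vanishing trace on $\mC$). Meanwhile, under the hypothesis \eqref{hyp_comp} on the scattering kernel, the lemma just above establishes that
\[
K : W^p_-(X\times V) \longrightarrow L^p(X\times V)
\]
is compact.

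The desired conclusion then follows by the standard fact that the composition of a bounded operator with a compact operator is compact: $KT^{-1}_\mC = K \circ T^{-1}_\mC$ maps $L^p(X\times V)$ into $L^p(X\times V)$ and is the composition of a bounded operator followed by a compact one. Hence $KT^{-1}_\mC$ is a compact operator on $L^p(X\times V)$.

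There is essentially no obstacle in this argument because the two nontrivial ingredients have already been proved: the velocity-averaging compactness of $M$ from $W^p_-$ to $L^p(X)$ (due to \cite{GLPS-JFA-88}), and the approximation of $k$ by finite-rank kernels of the form $\sum_j k_{n,j}(x,v)\phi_{n,j}(v')$ which reduces $K$ to a uniform limit of compact operators. The role of the present proposition is simply to record that, after composing with the resolvent $T^{-1}_\mC$ of the free transport operator, one obtains a compact operator on $L^p(X\times V)$ itself; this is the form needed to invoke Fredholm theory for $I - T^{-1}_\mC K$ (via the Fredholm alternative and the fact that $KT^{-1}_\mC$ and $T^{-1}_\mC K$ have the same nonzero spectrum) in the subsequent sections.
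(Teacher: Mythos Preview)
Your argument is correct and is exactly the approach the paper takes: the proposition is stated without proof precisely because it follows immediately from the two preceding lemmas by composing the bounded operator $T^{-1}_\mC:L^p\to W^p_-$ with the compact operator $K:W^p_-\to L^p$.
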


\begin{theorem}
Under the assumption \eqref{i10a} the operator $I-KT^{-1}_\mC$ is invertible in $L^p(X\times V)$ if and only if $I-T_\mC^{-1}K$ is invertible in $L^p(X\times V)$ or in other words condition \eqref{i20b} is fulfilled (as well as the stronger condition \eqref{i20a}). 

Under the stronger assumption \eqref{hyp_comp},  $I-T_\mC^{-1}K$ is invertible if and only if $1$ is not an eigenvalue of the compact operator $KT^{-1}_\mC$. 
And if the scattering coefficient $k$ is replaced by $\lambda k$, then the forward transport equation with general boundary conditions is invertible for all values of $\lambda$ except possibly for a countable number of values $\lambda$ such that $1$ belongs to the spectrum of $(\lambda K)T^{-1}_\mC$, in which case the transport equation is {\em not} invertible.

\end{theorem}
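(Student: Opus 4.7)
The plan is to combine Jacobson's lemma with the Fredholm alternative, plus the elementary subspace argument already used in the paper to pass from invertibility in $L^p(X\times V)$ to invertibility in $\tau^{1/p}L^p(X\times V)$.

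For the first assertion, note that under \eqref{i10a} both $K$ and $T^{-1}_\mC$ are bounded operators on $L^p(X\times V)$: $K$ by Lemma \ref{lem_K} and $T^{-1}_\mC$ by \eqref{i2} with $\ep=0$ together with $\|\tau\|_\infty<\infty$. I would then invoke Jacobson's lemma in the unital Banach algebra $\mathcal{L}(L^p(X\times V))$: for any bounded operators $A,B$ on a Banach space, $I-AB$ is invertible if and only if $I-BA$ is invertible, and in that case
\begin{equation*}
(I-BA)^{-1}=I+B(I-AB)^{-1}A.
\end{equation*}
Taking $A=T^{-1}_\mC$ and $B=K$ yields the equivalence between invertibility of $I-KT^{-1}_\mC$ and invertibility of $I-T^{-1}_\mC K$ in $L^p(X\times V)$, which is exactly condition \eqref{i20b}.

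To deduce \eqref{i20a} from \eqref{i20b}, I would reproduce the short argument that the paper already gives just before the statement of the last Lemma: since $\tau^{1/p}L^p(X\times V)\subset L^p(X\times V)$ and $T^{-1}_\mC$ maps $L^p(X\times V)$ into $\tau L^p(X\times V)\subset\tau^{1/p}L^p(X\times V)$, the resolvent identity above shows that if $u\in\tau^{1/p}L^p$ satisfies $(I-T^{-1}_\mC K)u=f\in\tau^{1/p}L^p$, then $u=f+T^{-1}_\mC K(I-T^{-1}_\mC K)^{-1}f$ automatically lies in $\tau^{1/p}L^p$, and the norms are controlled. Hence the inverse computed in $L^p$ restricts to a bounded inverse on the subspace $\tau^{1/p}L^p$.

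For the second assertion, assumption \eqref{hyp_comp} is strictly stronger than \eqref{i10a} (by H\"older in $v'$), so the first paragraph applies, and moreover the Proposition immediately preceding the theorem ensures that $KT^{-1}_\mC$ is \emph{compact} on $L^p(X\times V)$. The Fredholm alternative then gives that $I-KT^{-1}_\mC$ is invertible if and only if it is injective, i.e.\ if and only if $1$ is not an eigenvalue of $KT^{-1}_\mC$; combined with the first part this gives the claimed characterization for $I-T^{-1}_\mC K$. Finally, replacing $k$ by $\lambda k$ replaces $K$ by $\lambda K$, so invertibility fails precisely when $1\in\sigma(\lambda KT^{-1}_\mC)$. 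Since $\lambda KT^{-1}_\mC$ is compact, its spectrum (minus $0$) consists of eigenvalues, and for $\lambda\neq 0$ we have $1\in\sigma_p(\lambda KT^{-1}_\mC)$ iff $\lambda^{-1}\in\sigma_p(KT^{-1}_\mC)$. The nonzero point spectrum of a compact operator is at most countable with only possible accumulation point $0$, so the set of exceptional $\lambda$'s is countable, completing the proof.

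The only real subtlety — and thus the point I would double-check — is the passage from invertibility in $L^p$ to invertibility in the weighted subspace $\tau^{1/p}L^p$; everything else is a direct application of Jacobson's lemma and Riesz--Schauder theory. No nontrivial estimate beyond those already established in Lemmas \ref{lem_bound1}--\ref{lem_K} and the preceding Proposition is needed.
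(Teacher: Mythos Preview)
Your proposal is correct and follows essentially the same approach as the paper: the paper's proof is a one-paragraph sketch that invokes exactly the resolvent identity $(I-AB)^{-1}=I+A(I-BA)^{-1}B$ (Jacobson's lemma) together with boundedness of $K$ from Lemma \ref{lem_K} and the standard spectral theory of compact operators. Your write-up is simply a more detailed version of the same argument, including the explicit subspace passage from \eqref{i20b} to \eqref{i20a} that the paper already records just before the preceding Lemma.
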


The proof of the Theorem relies on the identity
$
(I-AB)^{-1}=I+A(I-BA)^{-1}B
$
for bounded operators $A$, $B$ with $I-BA$ invertible. 
Indeed the operator $K$ defines a bounded operator in $L^p(X\times V)$ by Lemma \ref{lem_K}. The rest of the statements relies on the standard spectral properties of compact operators.

Using a compactness result in \cite{GS-CRAS-02}, we may also derive a compactness result in the $L^1$ framework; see \cite[Chapter 4]{MK}.

\medskip

We now develop the Fredholm theory of the albedo operator under the above compactness conditions. Let $1<p<\infty$ and $\mG$ be a subset of $\Gamma$. We define $\gamma_{\mG}$ the involution in $L^{p\over p-1}(\mG,\tau d\xi)$ (or $L^{p\over p-1}(\mG,d\xi)$) given by
\begin{equation}
\gamma_\mG(\phi)(x,v)=
\left\lbrace
\begin{array}{l}
\phi(x,v),\ (x,v)\in \mG\cap \Gamma_+\\
-\phi(x,v),\ (x,v)\in \mG\cap\Gamma_-.
\end{array}
\right.
\end{equation}

\begin{theorem}
\label{thm_kerA}
Assume that \eqref{i20b} and \eqref{hyp_comp} holds. Then the albedo operator

\noindent$A\in\mathcal{L}(L^p(\mC, \tau d\xi),
L^p(\Gamma\b\mC, \tau d\xi))$ is a Fredholm operator of index 0. Moreover,
\begin{equation}
{\rm dim}({\rm ker}A)={\rm dim}\big({\rm ker}(I-KT_{\Gamma\b\mC}^{-1})\big).\label{ker_dim}
\end{equation} 
\end{theorem}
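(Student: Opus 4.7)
The plan is to write $A=A_0+R$ with $A_0$ a bounded isomorphism (the free-flight albedo) and $R$ a compact remainder, obtaining the Fredholm index-$0$ property, and then to identify $\ker A$ with $\ker(I-T_{\Gamma\b\mC}^{-1}K)$ via a natural bijection, using the compactness of $KT_{\Gamma\b\mC}^{-1}$ to recover \eqref{ker_dim}.

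\textbf{Decomposition and free-flight isomorphism.} I set $A_0g:=(L_\mC g)_{|\Gamma\b\mC}$ and $Rg:=(T_\mC^{-1}Ku)_{|\Gamma\b\mC}$, where $u:=(I-T_\mC^{-1}K)^{-1}L_\mC g$, so that $Ag=A_0g+Rg$. By the very definition of $\mC$, every line through $X$ has exactly one endpoint in $\mC$ and one in $\Gamma\b\mC$, with a common travel time $\tau$ assigned to both endpoints. The map sending $(x,v)\in\Gamma\b\mC$ to the opposite endpoint $(x',v)\in\mC$ of that line is therefore a bijection preserving $\tau d\xi$, and $A_0$ amounts to composition with this bijection multiplied by $E_\pm(\cdot,\cdot,\tau_\pm)$, whose values lie between $e^{-\|\tau\sigma\|_\infty}$ and $e^{\|\tau\sigma\|_\infty}$. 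Hence $A_0:L^p(\mC,\tau d\xi)\to L^p(\Gamma\b\mC,\tau d\xi)$ is a bounded isomorphism.

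\textbf{Compactness of $R$ and Fredholmness.} Using the resolvent identity $(I-T_\mC^{-1}K)^{-1}=I+T_\mC^{-1}(I-KT_\mC^{-1})^{-1}K$ (valid on $L^p(X\times V)$ under \eqref{i20b} together with the Fredholm alternative applied to the compact $KT_\mC^{-1}$), I would split
$$
Rg=(T_\mC^{-1}KL_\mC g)_{|\Gamma\b\mC}+(T_\mC^{-1}(KT_\mC^{-1})(I-KT_\mC^{-1})^{-1}KL_\mC g)_{|\Gamma\b\mC}.
$$
The preceding proposition, combined with \eqref{hyp_comp}, gives that $KT_\mC^{-1}$ is compact on $L^p(X\times V)$. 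The second term thus sandwiches this compact operator between bounded ones and is therefore compact from $L^p(\mC,\tau d\xi)$ to $L^p(\Gamma\b\mC,\tau d\xi)$. For the first term I would revisit the finite-rank approximation $k_n(x,v',v)=\sum_j k_{n,j}(x,v)\phi_{n,j}(v')$ of the scattering kernel used in the proof of that proposition, for which $T_\mC^{-1}K_nL_\mC$ factors through the averaging operator $M$ (compact on $W^p_-$ by \cite{GLPS-JFA-88}); the uniform operator-norm convergence $K_n\to K$ on $L^p(X\times V)$ afforded by \eqref{hyp_comp} then transfers compactness to $(T_\mC^{-1}KL_\mC)_{|\Gamma\b\mC}$ after taking the trace on $\Gamma\b\mC$. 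Thus $R$ is compact and $A=A_0+R$ is Fredholm of index $0$.

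\textbf{Kernel identification and main obstacle.} If $g\in\ker A$, then $u:=(I-T_\mC^{-1}K)^{-1}L_\mC g$ satisfies $u_{|\mC}=g$, $u_{|\Gamma\b\mC}=Ag=0$ and $v\cdot\nabla_x u+\sigma u=Ku$; applying $T_{\Gamma\b\mC}^{-1}$ to this identity yields $u\in\ker(I-T_{\Gamma\b\mC}^{-1}K)$. Conversely, any $u$ in that kernel has $u_{|\Gamma\b\mC}=0$, so by the uniqueness provided by \eqref{i20b} it equals $(I-T_\mC^{-1}K)^{-1}L_\mC(u_{|\mC})$, placing $u_{|\mC}\in\ker A$. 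These maps are linear and mutually inverse, so $\dim\ker A=\dim\ker(I-T_{\Gamma\b\mC}^{-1}K)$. The preceding compactness proposition applied with $\mC$ replaced by $\Gamma\b\mC$ yields compactness of $KT_{\Gamma\b\mC}^{-1}$ on $L^p(X\times V)$, and the standard identity $\dim\ker(I-AB)=\dim\ker(I-BA)$ for compact $AB$ delivers \eqref{ker_dim}. The main technical obstacle is the compactness of the single-collision term $(T_\mC^{-1}KL_\mC)_{|\Gamma\b\mC}$ in the weighted topology $L^p(\cdot,\tau d\xi)$: it does not factor through the compact $KT_\mC^{-1}$, and reconciling the $M$-compactness on $W^p_-$ with the fact that $L_\mC g$ need not lie in $W^p_-$ for $g$ merely in $L^p(\mC,\tau d\xi)$ requires a careful finite-rank approximation together with attention to grazing rays where $\tau\to 0$.
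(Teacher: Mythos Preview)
Your kernel identification and the passage to \eqref{ker_dim} via $\dim\ker(I-T_{\Gamma\b\mC}^{-1}K)=\dim\ker(I-KT_{\Gamma\b\mC}^{-1})$ are correct and coincide with the paper's argument.

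The gap is precisely where you flag it. Your route to the Fredholm property requires $R$ to be compact, and the second (multi-collision) piece is indeed compact because it factors through $KT_\mC^{-1}$. But the single-collision piece $(T_\mC^{-1}KL_\mC)_{|\Gamma\b\mC}$ does not, and your proposed fix via finite-rank $k_n$ and the averaging lemma breaks down exactly as you suspect: for $g\in L^p(\mC,\tau d\xi)$ one only has $L_\mC g\in\tilde W^p$, while the compactness of $M$ established in the paper (via \cite{GLPS-JFA-88}) is on $W^p_-$, which requires unweighted boundary traces in $L^p(d\xi)$. No averaging-lemma statement for the weaker space $\tilde W^p$ is available here, and a cutoff in $\tau$ does not yield norm-convergence of the truncation because $\|g\chi_{\{\tau<\varepsilon\}}\|_{L^p(\tau d\xi)}$ need not vanish uniformly. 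So the decomposition $A=A_0+R$ with $R$ compact is not justified under the stated hypotheses.

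The paper avoids this obstacle by not attempting to exhibit $A$ as an isomorphism plus compact. Instead it proves finite-dimensionality of $\ker A$ exactly as you do, then uses the Green identity \eqref{green_a} to show $\ker A^*=\gamma_{\Gamma\b\mC}(\ker A_{\rm back})$, identifies $\ker A_{\rm back}$ with $\ker\big(I-(T_{\Gamma\b\mC}^{-1})^*K^*\big)$, and finally uses that $KT_{\Gamma\b\mC}^{-1}$ is compact on $L^p(X\times V)$ to get $\dim\ker A=\dim\ker A^*$. Closedness of the range then follows from the relation $(I-T_{\Gamma\b\mC}^{-1}K)u=L_{\Gamma\b\mC}(Ag)$: since $I-T_{\Gamma\b\mC}^{-1}K$ is Fredholm on $L^p$ (its square contains the compact $KT_{\Gamma\b\mC}^{-1}$), its range is closed, and $\mathrm{Ran}\,A=L_{\Gamma\b\mC}^{-1}\big(\mathrm{Ran}(I-T_{\Gamma\b\mC}^{-1}K)\big)$ is the preimage of a closed set under a bounded map. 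The duality route thus trades the delicate boundary-to-boundary compactness you need for the already-established interior compactness of $KT_{\Gamma\b\mC}^{-1}$.
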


\begin{proof}
We first prove that the albedo operator has a finite dimensional kernel. By the last proposition we know that $(T_{\Gamma\b\mC}^{-1}K)^2$ is a compact operator in $L^p(X\times V)$. Hence $(I-T_{\Gamma\b\mC}^{-1}K)$ has a finite dimensional kernel. Moreover for $u\in \textrm{ker}(I-T_{\Gamma\b\mC}^{-1}K)$, we have
$u=T_{\Gamma\b\mC}^{-1}Ku\in \tilde W^p(X\times V)$, 
\begin{eqnarray}
&&A(u_{|\mC})=u_{|\Gamma\b\mC}=0,\label{i40a}\\
&& v\cdot \nabla_x u+\sigma u=Ku.\label{i40b}
\end{eqnarray}
Hence, the operator $\Phi:\textrm{ker}(I-T_{\Gamma\b\mC}^{-1}K)\to \textrm{ker}A$ defined by
$
\Phi u:=u_{|\mC}
$
is well defined and actually an isomorphism. It is one-to-one, since by \eqref{i40b} and assumption \eqref{i20b}, we have
$u=(I-T_{\mC}^{-1}K)L_\mC u_{|\mC}$ for any $u\in \textrm{ker}(I-T_{\Gamma\b\mC}^{-1}K)$.

It is also onto. Take $g\in L^p(\mC,\tau d\xi)$ and consider the solution $u\in \tilde W^p(X\times V)$ of the stationary Boltzmann equation \eqref{boltz} with boundary condition given by $g$. Then 
$
u=T_{\Gamma\b\mC}^{-1}Ku+L_{\Gamma\b \mC}Ag,
$
and we obtain that $u\in \textrm{ker}(I-T_{\Gamma\b\mC}^{-1}K)$ when $Ag=0$. Hence
$$
{\rm dim}({\rm ker}A)={\rm dim}({\rm ker}(I-T_{\Gamma\b\mC}^{-1}K)).
$$

We now prove that the cokernel of $A$ is finite-dimensional. It is equivalent to proving that the kernel of $A^*$ is finite-dimensional. By \eqref{green_a} we actually have that 
\begin{equation}
\textrm{ker}A^*=\gamma_{\Gamma\b \mC}\big(\textrm{ker}A_{\rm back}\big).\label{ker_A}
\end{equation}
Indeed we have
\begin{eqnarray*}
&&\int_{\mC}\psi (A_{\rm back}\phi) \tau d\xi =-[-\int_{\mC_+}\gamma_\mC(\psi)A_{\rm back}(\phi)\tau d\xi+\int_{\mC_-}\gamma_\mC(\psi)(A_{\rm back}\phi)\tau d\xi]\\
&=&-[\int_{\Gamma_+\b\mC_+}(A\gamma_\mC(\psi))\phi \tau d\xi-\int_{\Gamma_-\b\mC_-}(A\gamma_\mC(\psi))\phi\tau d\xi] \
=\ -\int_{\Gamma\b\mC}\gamma_\mC(\psi)A^*(\gamma_{\Gamma\b\mC}(\phi))\tau d\xi.
\end{eqnarray*}
Then as above we have that ${\rm ker} A_{\rm back}$ is isomorphic to ${\rm ker}(I-(T_{\Gamma\b\mC}^{-1})^*K^*)$. The latter kernel is finite dimensional since  $(T_{\Gamma\b\mC}^{-1})^*K^*=\big(KT_{\Gamma\b\mC}^{-1}\big)^*$ is a compact operator and
\begin{equation}
{\rm dim}({\rm ker}A^*)={\rm dim}({\rm ker}(I-{T_{\Gamma\b\mC}^{-1}}^*K^*)).\label{i80}
\end{equation}
It remains to prove that
$$
{\rm dim}({\rm ker}A^*)={\rm dim}({\rm ker}(I-T_{\Gamma\b\mC}^{-1}K))= {\rm dim}({\rm ker}(I-{T_{\Gamma\b\mC}^{-1}}^*K^*))={\rm dim}({\rm ker}A).
$$
Since $(T_{\Gamma\b\mC}^{-1})^*K^*$ is a compact operator, then $I-((T_{\Gamma\b\mC}^{-1})^*K^*)$ is a Fredholm operator of index $0$ \cite[Page 208]{Es} and
$
{\rm dim}\big({\rm ker}(I-{T_{\Gamma\b\mC}^{-1}}^*K^*)\big)={\rm dim}\big({\rm ker}(I-KT_{\Gamma\b\mC}^{-1})\big).
$
Therefore it remains to prove that 
\begin{equation}
{\rm dim}\big({\rm ker}(I-KT_{\Gamma\b\mC}^{-1})\big)={\rm dim}\big({\rm ker}(I-T_{\Gamma\b\mC}^{-1}K)\big).\label{i60}
\end{equation}
We just have to prove that
$T_{\Gamma\b\mC}^{-1}$ defines an isomorphism from ${\rm ker}(I-KT_{\Gamma\b\mC}^{-1}\big)$ to  ${\rm ker}(I-T_{\Gamma\b\mC}^{-1}K\big)$.
Indeed we have $T_{\Gamma\b\mC}^{-1}u=T_{\Gamma\b\mC}^{-1}KT_{\Gamma\b\mC}^{-1}u$ when $u=KT_{\Gamma\b\mC}^{-1}u$ and 
$v\cdot\nabla_x T_{\Gamma\b\mC}^{-1}u+\sigma T_{\Gamma\b\mC}^{-1}u=u $, which proves that $T_{\Gamma\b\mC}^{-1}$ is one-to-one.
For $w\in {\rm ker}(I-T_{\Gamma\b\mC}^{-1}K\big)$ then $w=T_{\Gamma\b\mC}^{-1}Kw\in \tilde W^p(X\times V)$ and 
$v\cdot\nabla_x w+\sigma w=Kw$. Hence set $u=Kw$ and we have $KT_{\Gamma\b\mC}^{-1}u=KT_{\Gamma\b\mC}^{-1}Kw=Kw=u$. Hence $T_{\Gamma\b\mC}^{-1}$ is onto from ${\rm ker}(I-KT_{\Gamma\b\mC}^{-1})$ to  ${\rm ker}(I-T_{\Gamma\b\mC}^{-1}K)$.
Identity \eqref{i60} is proved.
\end{proof}

\begin{theorem}
Assume that \eqref{i20a} and \eqref{hyp_comp} holds. Then the albedo operator $A\in \mathcal{L}(L^p(\mC, d\xi),$ 
$L^p(\Gamma\b\mC, d\xi))$ is a Fredholm operator of index 0 and \eqref{ker_dim} holds.
\end{theorem}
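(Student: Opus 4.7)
The argument parallels that of Theorem \ref{thm_kerA}, with the pair $(\tilde W^p, L^p(\cdot, \tau d\xi))$ replaced throughout by $(W^p, L^p(\cdot, d\xi))$, hypothesis \eqref{i20b} replaced by \eqref{i20a}, and Green's formula \eqref{green_a} replaced by \eqref{green_b}. Under \eqref{i20a}, the forward solution $u=(I-T_\mC^{-1}K)^{-1}L_\mC g$ lies in $\tau^{1/p}L^p(X\times V)\cap W^p$ for any $g\in L^p(\mC,d\xi)$, while Jacobson's lemma together with the backward theorem associated to \eqref{green_b} supplies the matching backward well-posedness in $W^{p/(p-1)}$ with $A_{\rm back}\in\mathcal{L}(L^{p/(p-1)}(\Gamma\b\mC,d\xi),L^{p/(p-1)}(\mC,d\xi))$.

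For $\ker A$, the restriction $u\mapsto u|_\mC$ is, exactly as in the proof of Theorem \ref{thm_kerA}, an isomorphism from $\ker(I-T_{\Gamma\b\mC}^{-1}K)$ (taken in $\tau^{1/p}L^p$) onto $\ker A$: injectivity uses the invertibility in \eqref{i20a}, surjectivity uses the identity $u=T^{-1}_{\Gamma\b\mC}Ku+L_{\Gamma\b\mC}Ag$. Since $\tau$ is bounded, $\tau^{1/p}L^p(X\times V)$ embeds continuously into $L^p(X\times V)$, and the proposition above shows that $(T_{\Gamma\b\mC}^{-1}K)^2$ is compact on $L^p$. The classical Fredholm alternative applied to $I-T_{\Gamma\b\mC}^{-1}K$ on $L^p$ (via its compact square) therefore yields a finite-dimensional kernel there, and hence a fortiori in the subspace $\tau^{1/p}L^p$.

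For $\ker A^*$, Green's formula \eqref{green_b} used verbatim as in the original proof gives $\ker A^*=\gamma_{\Gamma\b\mC}(\ker A_{\rm back})$, and a backward analogue of the same isomorphism identifies $\ker A_{\rm back}$ with $\ker(I-(T_{\Gamma\b\mC}^{-1})^*K^*)$ inside $\tau^{1-1/p}L^{p/(p-1)}\hookrightarrow L^{p/(p-1)}$, which is finite-dimensional because $((T_{\Gamma\b\mC}^{-1}K)^2)^*$ is compact on $L^{p/(p-1)}$.

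Finally, the closing chain of isomorphisms from the proof of Theorem \ref{thm_kerA} transfers without change: $T_{\Gamma\b\mC}^{-1}$ intertwines $\ker(I-KT_{\Gamma\b\mC}^{-1})$ with $\ker(I-T_{\Gamma\b\mC}^{-1}K)$, and the Fredholm alternative for the compact operator $KT_{\Gamma\b\mC}^{-1}$ on $L^p$ gives $\dim\ker(I-KT_{\Gamma\b\mC}^{-1})=\dim\ker((KT_{\Gamma\b\mC}^{-1})^*)$. Combining these equalities with the two preceding paragraphs yields $\dim\ker A=\dim\ker A^*=\dim\ker(I-KT_{\Gamma\b\mC}^{-1})$, so $A$ is Fredholm of index $0$ and \eqref{ker_dim} holds. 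The main subtlety throughout is the bookkeeping of function spaces: reconciling the natural habitat $\tau^{1/p}L^p$ of the forward solution (dictated by \eqref{i20a}) with the $L^p$-framework in which the compactness of $(T^{-1}K)^2$ is phrased; the continuous inclusion $\tau^{1/p}L^p\hookrightarrow L^p$, a consequence of $\tau$ being bounded, is the bridge that makes this transcription possible.
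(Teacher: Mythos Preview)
Your proof is correct and follows essentially the same route as the paper: transcribe the proof of Theorem~\ref{thm_kerA} with $(W^p,L^p(d\xi),\eqref{i20a},\eqref{green_b})$ in place of $(\tilde W^p,L^p(\tau d\xi),\eqref{i20b},\eqref{green_a})$, establish the isomorphism $\Phi:u\mapsto u_{|\mC}$ between $\ker(I-T_{\Gamma\b\mC}^{-1}K)$ and $\ker A$, and close with the same chain of dimension equalities via the intertwining map $T_{\Gamma\b\mC}^{-1}$ and the Fredholm alternative for $KT_{\Gamma\b\mC}^{-1}$.

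One minor point of comparison: where you bridge the $\tau^{1/p}L^p$ and $L^p$ frameworks solely via the inclusion $\tau^{1/p}L^p\hookrightarrow L^p$, the paper is slightly more explicit. The inclusion alone gives only containment of kernels, not equality; what closes the gap is the regularizing property of $T_{\Gamma\b\mC}^{-1}$ (it maps $L^p$ into $\tau L^p\subset\tau^{1/p}L^p$), so any $u\in L^p$ with $u=T_{\Gamma\b\mC}^{-1}Ku$ automatically lies in $\tau^{1/p}L^p$, and similarly for the adjoint. The paper records this explicitly (``$\ker(I-(T_{\Gamma\b\mC}^{-1})^*K^*)$ does not depend on the ambient space $\tau^{1-1/p}L^{p/(p-1)}$ or $L^{p/(p-1)}$ because of nice properties of $(T_{\Gamma\b\mC}^{-1})^*$''), and also asserts compactness of $(T_{\Gamma\b\mC}^{-1}K)^2$ directly on $\tau^{1/p}L^p$ rather than on $L^p$. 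Your version is equivalent once this observation is made, but stating it would make the dimension-matching in your final paragraph watertight.
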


\begin{proof}
The proof follows the same lines as the proof of Theorem \ref{thm_kerA}.
By the last proposition $(T_{\Gamma\b\mC}^{-1}K)^2$ is a compact operator in $\tau^{1\over p}L^p(X\times V)$. Hence $(I-T_{\Gamma\b\mC}^{-1}K)$ has a finite dimensional kernel. Moreover for $u\in \textrm{ker}(I-T_{\Gamma\b\mC}^{-1}K)$, we have
$u=T_{\Gamma\b\mC}^{-1}Ku\in W^p_{\Gamma\b \mC}(X\times V)$ and \eqref{i40a} and \eqref{i40b} hold.
Hence the operator $\Phi:\textrm{ker}(I-T_{\Gamma\b\mC}^{-1}K)\to \textrm{ker}A$ defined by
$
\Phi u:=u_{|\mC}
$
is well defined and it is actually an isomorphism. Use \eqref{i20a}, $W^p(X\times V)$, $L^p(\mC,d\xi)$ in place of \eqref{i20b}, $\tilde W^p(X\times V)$, $L^p(\mC,\tau d\xi)$ appearing in the proof of Theorem \ref{thm_kerA}. Hence
$$
{\rm dim}({\rm ker}A)={\rm dim}({\rm ker}(I-T_{\Gamma\b\mC}^{-1}K)).
$$

The cokernel of $A$ is finite-dimensional (replace $\tau d\xi$ by $d\xi$ in the Green's formula in the proof of Theorem \ref{thm_kerA}), and
$$
{\rm dim}({\rm ker}A^*)={\rm dim}({\rm ker}(I-{T_{\Gamma\b\mC}^{-1}}^*K^*)).
$$
Note that ${\rm ker}(I-(T_{\Gamma\b\mC}^{-1})^*K^*)$ does not depend on the ambient space $\tau^{1-{1\over p}}L^{p\over p-1}(X\times V)$ or $L^{p\over p-1}(X\times V)$ because of nice properties of $(T_{\Gamma\b\mC}^{-1})^*$.
We conclude as in the end of the previous proof.
\end{proof}

\section{Control of solutions on convex subdomains}
\label{sec:convex}

We now consider boundary control problems for the transport equation. By boundary control, we mean control of the transport solution on $\Gamma\backslash\mC$ from prescriptions on $\mC$. We primarily consider two situations. The first one, treated in this section, concerns the control of a transport solution on $X_0\subset X$ from the incoming conditions on $\Gamma_-(X)$. Denoting by $Y=X\backslash X_0$, this is a problem of control of $\Gamma(Y)\backslash\mC$ from $\mC\subset\Gamma(Y)$.  The second one is the control on the outgoing set $\Gamma_+$ from the prescription of the incoming conditions on $\Gamma_-$. It will be analyzed in the next section.

\subsection{Extension result}

We start with an extension result, which is a relatively direct consequence of the forward transport theory developed in section \ref{sec:forward}.

Let $Z$ and $Y$ be bounded open domains of class $C^1$, $Z\cap Y=\emptyset$ with $Z$ convex and $\pa Z\subset \pa Y$. 
Define $Z_{\rm ext}=\bar Z\cup Y$ the open domain containing $Z, Y$. The objective is to write any transport solution on $Z$ as the restriction of a transport solution on $Z_{\rm ext}$. We will see that such extended solutions are not unique.

We assume that $\sigma\in L^\infty(Y\times V)$. Let $1\le p<\infty$. We consider the boundary value problem \eqref{boltz} in $Z$ where the boundary conditions are taken at ``$(\mC_-,\mC_+)=(\Gamma_-(Z),\emptyset)$".
For $g\in L^p(\Gamma_-(Z),d\xi)$, let us assume that there exists $u_0\in W^p(Z\times V)$ solution of the Boltzmann equation in $Z\times V$ with ${u_0}_{|\Gamma_-(Z)}=g$. Existence of  $u_0$ is granted when the condition \eqref{i20b} related to $(Z, \Gamma_-(Z),\emptyset)$ holds.

Set
\begin{eqnarray*}
\mG_-(Z_{\rm ext},Z)=\{(x,v)\in \Gamma_-(Z_{\rm ext})\ | \ x+tv\not\in Z,\ t\in [0,\tau_+(x,v)] \},\\
\mC_{-,1}=\mG_-(Z_{\rm ext},Z)\cup \Gamma_+(Z),\quad 
\mC_{+,1}=\Gamma_-(Z),\quad \mC_1=\mC_{-,1}\cup\mC_{+,1}.
\end{eqnarray*}
Note that $\Gamma_\pm(Y)\cap(\pa Z\times V)=\Gamma_\mp(Z)$. 
Let $\gamma\in L^p(\mG_-(Z_{\rm ext},Z),d\xi)$ and assume that there exists a solution $u_1\in W^p(Y\times V)$ of the Boltzmann equation in $Y\times V$ with ${u_1}_{|\mG_-(Z_{\rm ext},Z)}=\gamma$ and ${u_1}_{|\Gamma(Z)}={u_0}_{|\Gamma(Z)}$. 
Existence of $u_1$ is granted when the related condition \eqref{i20b} holds for $(Y,\mC_1)$, for instance when $Y$ is `sufficiently small'.
Set now 
\begin{equation}
u(x,v):=\left\lbrace
\begin{array}{l}
u_0(x,v),\ (x,v)\in Z\times V,\\
u_1(x,v),\ (x,v)\in Y\times V.
\end{array}
\right.
\end{equation}

\begin{lemma}
\label{lem_ext}
The function $u$ belongs to $W^p(Z_{\rm ext}\times V)$ and satisfies the Boltzmann equation in $Z_{\rm ext}\times V$.
\end{lemma}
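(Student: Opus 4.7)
The plan is to deduce both conclusions—the Boltzmann equation on $Z_{\rm ext}\times V$ and membership in $W^p(Z_{\rm ext}\times V)$—simultaneously via a distributional gluing argument across the common interface $\partial Z$. The mechanism is that the matching hypothesis $u_1|_{\Gamma(Z)}=u_0|_{\Gamma(Z)}$ kills the interface trace term that would otherwise arise when differentiating a piecewise-defined function along $v$.

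First I would verify the weighted integrability. Since $Z$ and $Y$ both sit inside $Z_{\rm ext}$, any line segment meeting $Z$ (respectively $Y$) is at least as long when measured inside $Z_{\rm ext}$ as inside $Z$ (respectively $Y$); hence $\tau_{Z_{\rm ext}}\ge\tau_Z$ on $Z\times V$ and $\tau_{Z_{\rm ext}}\ge\tau_Y$ on $Y\times V$, so
$$\|\tau_{Z_{\rm ext}}^{-1/p}u\|_{L^p(Z_{\rm ext}\times V)}^p\le \|\tau_Z^{-1/p}u_0\|_{L^p(Z\times V)}^p+\|\tau_Y^{-1/p}u_1\|_{L^p(Y\times V)}^p<\infty.$$

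Next I would compute $v\cdot\nabla_x u$ distributionally on $Z_{\rm ext}\times V$. For $\varphi\in C_c^\infty(Z_{\rm ext}\times V)$, splitting the pairing $-\int u\,(v\cdot\nabla_x\varphi)$ into integrals over $Z\times V$ and $Y\times V$ and applying Green's formula on each piece---legitimate since $u_0,u_1$ admit $L^p$ boundary traces by Lemma 2.1---yields
\begin{align*}
\langle v\cdot\nabla_x u,\varphi\rangle &= \int_{Z\times V}(v\cdot\nabla_x u_0)\varphi\,dxdv+\int_{Y\times V}(v\cdot\nabla_x u_1)\varphi\,dxdv\\
&\quad -\int_{\partial Z\times V}(v\cdot\nu_Z)u_0\varphi\,d\mu(x)dv-\int_{\partial Y\times V}(v\cdot\nu_Y)u_1\varphi\,d\mu(x)dv.
\end{align*}
The portion of the last integral over $\partial Z_{\rm ext}$ vanishes since $\varphi$ has compact support; the remaining portion lives on $\partial Z\subset\partial Y$, where $\nu_Y=-\nu_Z$. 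The two interface terms then combine into $\int_{\partial Z\times V}(v\cdot\nu_Z)(u_1-u_0)\varphi\,d\mu(x)dv$, which vanishes by the matching hypothesis.

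Combining this with $v\cdot\nabla_x u_j+\sigma u_j=Ku_j$ on the respective domains shows $v\cdot\nabla_x u=-\sigma u+Ku$ a.e.\ on $Z_{\rm ext}\times V$, which is \eqref{boltz} on the enlarged domain; multiplying by $\tau_{Z_{\rm ext}}^{1-1/p}$ and using $\tau\sigma\in L^\infty$ together with the $L^p$ mapping bounds on $K$ from Lemma \ref{lem_K} places $v\cdot\nabla_x u$ in $\tau_{Z_{\rm ext}}^{1/p-1}L^p$, completing membership in $W^p(Z_{\rm ext}\times V)$. The main delicate point is justifying Green's formula at the $W^p$ level rather than for $C^1$ functions; this is handled by approximating $u_0$ and $u_1$ in their respective graph norms by smooth functions whose traces converge in $L^p(\Gamma,d\xi)$ thanks to Lemma 2.1, after which the interface cancellation is immediate.
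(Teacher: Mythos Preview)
Your proof is correct and follows essentially the same route as the paper's: first bound $\tau_{Z_{\rm ext}}^{-1/p}u$ in $L^p$ via the monotonicity $\tau_{Z_{\rm ext}}\ge\max(\tau_Z,\tau_Y)$, then compute the distributional $v\cdot\nabla_x u$ by splitting over $Z$ and $Y$, integrating by parts, and cancelling the interface terms on $\partial Z$ using the matching condition and the opposite orientation of the normals. The paper organizes the boundary terms slightly differently (splitting into $\Gamma_\pm(Z)$ and $\Gamma_\pm(Y)\cap(\partial Z\times V)$ with the signed measure $d\xi$ rather than writing $(v\cdot\nu)\,d\mu\,dv$), and it leaves the verification that $\tau^{1-1/p}(Ku-\sigma u)\in L^p$ implicit, whereas you spell it out via Lemma~\ref{lem_K}; but the substance is identical.
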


\begin{proof}
We have $u_0\in W^p(Z\times V)$ and $u_1\in W^p(Y\times V)$. Hence we have $\tau^{-{1\over p}}u\in L^p(Z_{\rm ext}\times V)$ (indeed we always have $\max(\tau(Y), \tau(Z))\le \tau(Z_{\rm ext})$, where $\tau(X)$ is the function $\tau$ related to a bounded open set $X$).
We now prove that the weak derivative $v\cdot \nabla_x u$ exists and is equal to $Ku-\sigma u$, which will prove that $u\in W^p(Z_{\rm ext}\times V)$ and that $u$ solves the Boltzmann equation in $Z_{\rm ext}\times V$.
Indeed, for $\phi\in C^1_0(Z_{\rm ext}\times V)$ 
\begin{eqnarray*}
&&\int_{Z_{\rm ext}\times V}u(v\cdot \nabla_x \phi)dx dv=\int_{Z\times V}u_0(v\cdot \nabla_x \phi)dx dv
+\int_{Y\times V}u_1(v\cdot \nabla_x \phi)dx dv\\
&=&-\int_{Z\times V}(v\cdot \nabla_x)u_0\phi dx dv+\int_{\Gamma_+(Z)}u_0\phi d\xi-\int_{\Gamma_-(Z)}u_0\phi d\xi\\
&&-\int_{Y\times V}(v\cdot \nabla_x)u_1\phi dx dv
-\int_{\Gamma_-(Y)\cap(\pa Z\times V)}u_1\phi d\xi+\int_{\Gamma_+(Y)\cap(\pa Z\times V)}u_1\phi d\xi\\
&=&-\int_{Z\times V}(Ku_0-\sigma u_0)\phi dx dv-\int_{Y\times V}(Ku_1-\sigma u_1)\phi dx dv
= -\int_{Z_{\rm ext}\times V}(Ku-\sigma u)\phi dx dv.
\end{eqnarray*}
\end{proof}

\begin{remark}
The extension result may fail when $\pa Y\cap \pa Z$ is not the boundary of the convex hull of $Z$. In that case, the solution of the Boltzmann equation in $Z$ must satisfy additional compatibility conditions at the boundary $\pa Z \cap \pa Y$.
\end{remark}

Note that the extension result was very specific to the transport equations. Another way of stating the above result is to observe that any transport solution on a convex domain $Z$ may be seen as the restriction of a transport solution on a larger domain. This is clearly not true for solutions of elliptic or even wave equations, where restrictions to subsets of solutions defined on large domains have regularity properties that arbitrary solutions on such subsets do not need to possess. Such extensions are clear in the absence of scattering (only for convex domains $Z$). Our main result is that they continue to hold in the presence of scattering.

\subsection{Control by a layer peeling argument}
In this section, we still assume $1\le p<\infty$. Let $\rho\in C^2(\R^d)$ so that 
\begin{equation}
{\rm Hess}\ \rho(x)>0\ x\in \R^d\b\{0\},\ \nabla \rho(0)=0 \textrm{ and }\rho(0)=-1. 
\end{equation}
We also assume that $|\rho(x)|\to\infty$ as $|x|\to\infty$.
For $-1<s\leq 1$, we denote by $Z_s$ the convex and bounded domain $\{x\in \R^d\ |\ \rho(x)<s\}$.

For $-1<s_1<s_2\le 1$, we denote
\begin{equation}
\mG_-(s_2,s_1):=\{(x,v)\in \Gamma_-(Z_{s_2})\ |\ (x+\R v)\cap Z_{s_1}=\emptyset\},\ 1\le k\le N.\label{defg}
\end{equation}

Consider $X:=Z_1\b\bar Z_0$. Then the following result holds.

\begin{theorem}
\label{thm_peel}
Let $\sigma\in L^\infty(X\times V)$. Let $k\in L^\infty\big(X, L^p(V_v,L^{p\over p-1}(V_{v'})\big)$ when $p>1$ and $k\in L^\infty(X\times V\times V)$ when $p=1$.
Then there exists $\ep>0$ so that for any $(\beta,\gamma)\in L^p(\pa Z_0\times V,d\xi)\times L^p(\mG_-(1,1-\ep),d\xi)$  there exists a solution of the linear Boltzmann transport equation $u\in W^p(X \times V)$ with the following boundary conditions 
\begin{equation}
u_{|\pa Z_0\times V}=\beta,\ u_{|\mG_-(1,1-\ep)}=\gamma.
\end{equation}
\end{theorem}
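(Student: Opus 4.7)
The plan is a layer peeling argument: partition $X=Z_1\b\bar Z_0$ into finitely many thin concentric shells between successive sublevel sets of $\rho$, solve the Boltzmann equation on each shell by Theorem \ref{thm_solv}, and glue the pieces together via the Green-identity computation used to prove Lemma \ref{lem_ext}.

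First I pick a subdivision $0=s_0<s_1<\cdots<s_{N-1}<s_N=1$ with $s_{N-1}=1-\ep$ such that each layer $L_k:=Z_{s_k}\b\bar Z_{s_{k-1}}$ is thin enough for the forward Boltzmann theory to apply on $L_k$ with boundary data prescribed on
\[
\mC^{(k)}:=(\pa Z_{s_{k-1}}\times V)\cup \mG_-(s_k,s_{k-1}).
\]
A brief case analysis shows $\mC^{(k)}$ is admissible in the sense of Section \ref{sec:forward}: a chord of $L_k$ whose line misses $Z_{s_{k-1}}$ is labeled at its incoming point in $\mG_-(s_k,s_{k-1})$, while a chord whose line meets $Z_{s_{k-1}}$ splits into two segments of $L_k$ that are labeled respectively at the outgoing and incoming crossings of $\pa Z_{s_{k-1}}$. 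Since $\mathrm{Hess}\,\rho>0$ away from $0$, the maximum chord length through $L_k$ is $O(\sqrt{s_k-s_{k-1}})$ uniformly in $k$, so by refining the partition I can make $\|\tau_{L_k}\|_\infty$ as small as desired; together with estimate \eqref{i2} and the smallness lemma stated after Theorem \ref{thm_solv}, this yields invertibility of $I-T_{\mC^{(k)}}^{-1}K$ in $L^p(L_k\times V)$ for each $k$ (the only $k$-dependent quantity is $\|\tau_{L_k}\|_\infty$, since the hypothesis on $k$ bounds $K$ uniformly).

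Second I construct the solution inductively. For $k=1$, set $\phi^{(1)}:=\beta$ on $\pa Z_0\times V$ and $\phi^{(1)}:=0$ on $\mG_-(s_1,0)$; Theorem \ref{thm_solv} produces $u_1\in W^p(L_1\times V)$ with $u_{1|\mC^{(1)}}=\phi^{(1)}$. For $2\le k\le N-1$, the trace of $u_{k-1}$ on $\pa Z_{s_{k-1}}\times V$ belongs to $L^p(\pa Z_{s_{k-1}}\times V,d\xi)$ by the trace lemma of Section \ref{sec:forward}, so I set $\phi^{(k)}$ equal to this trace on $\pa Z_{s_{k-1}}\times V$ and equal to $0$ on $\mG_-(s_k,s_{k-1})$, and apply Theorem \ref{thm_solv} to obtain $u_k\in W^p(L_k\times V)$. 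For $k=N$ I do the same but with $\gamma$ in place of $0$ on $\mG_-(1,1-\ep)$.

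Third I glue. Define $u:=u_k$ on $L_k$. At each internal interface $\pa Z_{s_{k-1}}\times V$ the full traces of $u_{k-1}$ and $u_k$ coincide by construction, because $\mC^{(k)}$ prescribes both the incoming and the outgoing parts there. Iterating the Green-identity computation of Lemma \ref{lem_ext} along the chain, the internal boundary contributions cancel pairwise, while external contributions on $\pa Z_0$ and $\pa Z_1$ vanish against any test function $\phi\in C^1_0(X\times V)$; hence $u\in W^p(X\times V)$ solves \eqref{boltz} on $X\times V$. The boundary identities $u_{|\pa Z_0\times V}=\beta$ and $u_{|\mG_-(1,1-\ep)}=\gamma$ are built in at the first and last step. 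The main technical point is arranging the partition so Theorem \ref{thm_solv} applies uniformly; this reduces to making $\|\tau_{L_k}\|_\infty$ small, which, by the geometric bound above, is achieved by finitely many layers whose coarseness depends only on $\sigma$, $k$, and $\rho$.
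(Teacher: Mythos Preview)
Your proposal is correct and follows essentially the same layer-peeling argument as the paper: there the subdivision is taken uniform, $s_k=k/N$, the chord-length estimate $\|\tau_k\|_\infty\le C_\rho/\sqrt{N}$ is isolated as Lemma~\ref{lem_phi}, and the gluing step is phrased as a citation of Lemma~\ref{lem_ext}, but the substance is identical to what you wrote. Your explicit check that $\mC^{(k)}$ parametrizes the chords of $L_k$ and your remark that it is the Green-identity computation (rather than the literal hypothesis ``$Z$ convex'' in Lemma~\ref{lem_ext}) that drives the gluing are both accurate and slightly more careful than the paper's shorthand.
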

The theorem should be interpreted as follows. A transport solution is constructed on the domain $X\times V$ with boundary conditions arbitrarily prescribed on the whole boundary of $Z_0$, i.e., on $\Gamma_+(Z_0)\cup\Gamma_-(Z_0)$, as well as on the incoming directions of $Z_1$ that are sufficiently grazing. How grazing, i.e., how small $\eps$ has to be, depends on the scattering coefficients.
 

The proof uses the following Lemma.
Let $N\in \N$, and denote 
$$
Y_{k,N}:=\{x\in X\ |\ {k-1\over N}<\rho(x)<{k\over N}\},\ 1\le k\le N.
$$
In each layer $Y_{k,N}$ we denote $\tau_k$ the function $\tau$ related to $Y_{k,N}$,
$$
\mC_{-,k}=\mG_-\big({k\over N},{k-1\over N}\big)\cup \Gamma_+(Z_{k-1\over N}),\ 
\mC_{+,k}=\Gamma_-(Z_{k-1\over N}),\ \mC_k=\mC_{-,k}\cup\mC_{+,k},
$$
and we denote $L_{\mC_k}$, $T_{\mC_k}^{-1}$, $K$, $A_{\mC_k}$ the operators  ``$L_{\mC}$, $T_{\mC}^{-1}$, $K$ and $A$" introduced in section \ref{sec:forward} for the couple ``domain--boundary conditions $ (X,\mC)"= (Y_{k,N},\mC_k)$. 

\begin{lemma}
\label{lem_phi}
There exists a constant $C_\rho$ so that
\begin{equation}
\sup_{N\in \N\atop 1\le k\le N}\|\tau_k\|_{L^\infty(Y_{k,N}\times V)}\le {C_\rho\over \sqrt{N}}.\label{phi}
\end{equation}
\end{lemma}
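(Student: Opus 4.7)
The plan is to exploit the uniform positive definiteness of $\mathrm{Hess}\,\rho$ on the compact set $\bar X$ to show that any chord through a point of $Y_{k,N}$ which stays inside $Y_{k,N}$ has ``time-length'' of order $1/\sqrt{N}$, uniformly in $k$ and $N$. This quadratic scaling reflects at the level of chords the fact that two nearby strictly convex level sets of $\rho$ peel off from each other at rate $O(\sqrt{\Delta\rho})$.

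First I would observe that, since $\rho(0)=-1$ while $\rho\ge 0$ on $X=Z_1\setminus\bar Z_0$, the origin lies at positive distance from $\bar X$. The hypothesis $\mathrm{Hess}\,\rho(y)>0$ for $y\ne 0$, together with continuity and compactness of $\bar X$, then yields a constant $\lambda>0$ with $\xi^\top \mathrm{Hess}\,\rho(y)\,\xi \ge \lambda|\xi|^2$ for all $y\in\bar X$ and $\xi\in\R^d$. Since $V$ avoids a neighborhood of $0$, there is also $v_{\min}>0$ such that $|v|\ge v_{\min}$ on $V$.

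Next, for any $(x,v)\in Y_{k,N}\times V$, I would let $I\subset\R$ be the connected component containing $0$ of $\{s\in\R : x+sv\in Y_{k,N}\}$, so that $|I|=\tau_k(x,v)$ by definition of $\tau_k=\tau_-+\tau_+$. Setting $\gamma(s):=\rho(x+sv)$, the chain rule gives
$$\gamma''(s)=v^\top \mathrm{Hess}\,\rho(x+sv)\,v \;\ge\; \lambda|v|^2 \;\ge\; \lambda v_{\min}^2=:c>0$$
on $I$, so $\gamma$ is uniformly convex there. Moreover $\gamma(I)\subset [(k-1)/N,\, k/N]$, so the oscillation of $\gamma$ on $I$ is at most $1/N$.

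Finally, I would pick a minimizer $s^*\in\bar I$ of $\gamma$ and combine uniform convexity with the sign condition $\gamma'(s^*)(s-s^*)\ge 0$ (valid whether $s^*$ is interior, or on the left or right endpoint of $\bar I$) to deduce, via the second-order Taylor formula with integral remainder,
\[
\gamma(s)-\gamma(s^*)\ge \tfrac{c}{2}(s-s^*)^2,\qquad s\in I.
\]
Coupling this with the oscillation bound $\gamma(s)-\gamma(s^*)\le 1/N$ forces $|s-s^*|\le\sqrt{2/(cN)}$ for every $s\in I$, whence $\tau_k(x,v)=|I|\le 2\sqrt{2/(cN)}=C_\rho/\sqrt{N}$ with $C_\rho:=2\sqrt{2/(\lambda v_{\min}^2)}$ independent of $(x,v,k,N)$, giving \eqref{phi}. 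The argument presents no serious obstacle; the only point requiring mild care is the uniform Hessian lower bound, which rests on the geometric observation that $\bar X$ stays away from the unique singular point of $\nabla\rho$.
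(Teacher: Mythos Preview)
Your proof is correct and follows essentially the same strategy as the paper: both restrict $\rho$ to the line $s\mapsto x+sv$, use the uniform positive lower bound on $\mathrm{Hess}\,\rho$ over $\bar X$ (together with $|v|\ge v_{\min}>0$) to obtain $\gamma''\ge c>0$, and then convert the oscillation bound $\mathrm{osc}\,\gamma\le 1/N$ into the chord bound $|I|\lesssim 1/\sqrt{N}$. The only difference is tactical: the paper locates the point $s_0$ minimizing $|\gamma'|$ and splits into the cases $|\gamma'(s_0)|\le N^{-1/2}$ (handled by a Taylor expansion with the Hessian term) and $|\gamma'(s_0)|\ge N^{-1/2}$ (handled by monotonicity of $\gamma$), whereas you locate the minimizer $s^*$ of $\gamma$ itself and use the single inequality $\gamma(s)-\gamma(s^*)\ge \tfrac{c}{2}(s-s^*)^2$, which absorbs both cases at once. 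Your route is slightly more streamlined and yields the explicit constant $C_\rho=2\sqrt{2/(\lambda v_{\min}^2)}$; the paper's route makes the two mechanisms (small gradient vs.\ large gradient) more visible but is otherwise equivalent.
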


\begin{proof}[Proof of Lemma \ref{lem_phi}]
Let $(x,v)\in \Gamma_-(Y_{k,N})$. Let $s_0\in[0,\tau_k(x,v)]$ so that $|\nabla \rho(x+s_0v)\cdot v|=\inf_{0<s<\tau_k(x,v)} |\nabla \rho(x+sv)\cdot v|$.
First assume that $|\nabla \rho(x+s_0v)\cdot v|\le {1\over \sqrt{N}}$. Then
\begin{eqnarray*}
{1\over N}&\ge&|\rho(x+\tau_k(x,v) v)-\rho(x+s_0v)|
=(\tau_k(x,v)-s_0)\Big|\nabla\rho(x+s_0v)\cdot v\nonumber\\
&&+(\tau_k(x,v)-s_0)\int_0^1(1-\ep){\rm Hess}\rho(x+s_0v+\ep(\tau_k(x,v)-s_0)v)(v,v)d\ep\Big|\\
&\ge&C_1 (\tau_k-s_0)^2(x,v)-{1\over \sqrt{N}}(\tau_k(x,v)-s_0),
\end{eqnarray*}
where $C_1=\inf_{(z,w)\in X\times V}{\rm Hess} \rho(z)(w,w)$, and we obtain that $\tau_k(x,v)-s_0\le {1\over \sqrt{N}}{1+\sqrt{1+4C_1}\over 2C_1}$.
Similarly replacing $x+\tau_k(x,v)v$ by $x$, we obtain  that $s_0\le {1\over \sqrt{N}}{1+\sqrt{1+4C_1}\over 2C_1}$, and we get $\tau_k(x,v)\le {2\over \sqrt{N}}{1+\sqrt{1+4C_1}\over 2C_1}$.
Now assume that $|\nabla \rho(x+s_0v)\cdot v|\ge {1\over \sqrt{N}}$. Then 
$
{1\over N}=|\rho(x+\tau_k(x,v) v)-\rho(x)|\ge{\tau_k(x,v)\over \sqrt{N}}.
$
\end{proof}

\begin{proof}[Proof of Theorem \ref{thm_peel}]
Let $N\in \N$, $N\ge 3$ so that
\begin{equation}
\big\|\int_V\big(\int_V|k|^{p\over p-1}(.,v',v)dv'\big)^{p-1}dv\big\|_{L^\infty(X)}^{1\over p}\le {\sqrt{N}e^{-{C_\rho\|\sigma\|_\infty\over\sqrt{N}}}\over C_\rho}
\end{equation} 
when $p>1$, or
\begin{equation}
\|k\|_{L^\infty}\le {\sqrt{N}e^{-{C_\rho\|\sigma\|_\infty\over\sqrt{N}}}\over C_\rho |V|}
\end{equation}
when $p=1$ ($|V|$ is the nonzero Lebesgue measure of $V$ as a $(n-1)$-dimensional closed hypersurface or as a bounded open set in $\R^n$ excluding $0$).
We have 
\begin{equation}
X=Y_{1,N}\cup Y_{N,N}\cup(\cup_{k=2}^{N-1}\bar Y_{k,N}).
\end{equation}
Using Lemma \ref{lem_phi}, it follows that estimate \eqref{i70} is fulfilled in each subdomain $Y_{k,N}$, $1\le k\le N$ when $p>1$. When $p=1$ the estimate \eqref{i70b} is fulfilled instead.
We first consider \eqref{boltz} in $Y_{1, N}$ and obtain that there exists a unique solution $u_1\in W^p(Y_{1,N}\times V)$ so that
${u_1}_{|\Gamma(Z_{1\over N})}=\beta$, ${u_1}_{|\mG_{{1\over N},0}}=0$, where $\mG_{{k\over N},{k-1\over N}}$ is defined by \eqref{defg}. 
Then inductively we consider the unique solution $u_k\in W^p(Y_{k,N}\times V)$ of \eqref{boltz} in $Y_{k,N}$ with boundary conditions
${u_k}_{|\Gamma(Z_{k-1\over N})}={u_{k-1}}_{|\Gamma(Z_{k-1\over N})}$ and 
${u_k}_{|\mG_-({k\over N},{k-1\over N})}=0$ for $k<N$. For $k=N$ we consider the  unique solution $u_N\in W^p(Y_{N,N}\times V)$ of \eqref{boltz} in $Y_{N,N}$ with boundary conditions ${u_N}_{|\Gamma(Z_{N-1\over N})}={u_{N-1}}_{|\Gamma(Z_{N-1\over N})}$ and 
${u_N}_{|\mG_-({k\over N},{k-1\over N})}=\gamma$.
Then using the extension Lemma \ref{lem_ext} it follows that the function $u$ equal to $u_k$ in each $Y_{k,N}$, $1\le k\le N$, solves the transport equation with boundary condition 
$u_{|\pa Z_0\times V}=\beta$, $u_{|\mG_-(1,{N-1\over N})}=\gamma$, $\ep={1\over N}$.
\end{proof}

A first conclusion of this theorem is that a transport solution posed on a convex domain $Z_0$ can be controlled from the boundary of a larger convex domain $Z_1$. This generalizes results obtained in \cite{BCS-SIMA-16} and finds applications in a class of hybrid inverse problems for the transport equation; see \cite{BCS-SIMA-16}.

Another conclusion concerns the violation of a  Unique Continuation Property (UCP) for the transport equation in the following sense. No matter ``how big" the scattering coefficient is, this does not imply a unique continuation property. Indeed let $u_1$ and $u_2$ be solutions of the transport equation with boundary conditions 
${u_1}_{|\Gamma\cap (\pa Z_0\times V)}={u_2}_{|\Gamma\cap (\pa Z_0\times V)}$ and $ {u_i}_{|\mG_\ep}=\gamma_i$, $i=1,2$, $\gamma_1\not=\gamma_2$.
Thus, $w=u_1-u_2$ is a non zero solution of the transport equation with zero incoming and outgoing boundary conditions at $\pa Z_0$. Therefore $w$ can be continued by zero in $Z_0$ and the continuation remains a solution of the Boltzmann transport equation in $Z_1$.

The above results should be contrasted with the case of second-order scalar elliptic equations. For such equations, the UCP holds and is equivalent to a Runge Approximation stating that any elliptic solution on a (not necessarily convex) subdomain may be {\em approximately} controlled from the boundary. On the other hand, transport solutions in the presence of large scattering (small mean free paths $\eps$) are well approximated by diffusion solutions, with a vanishing error as $\eps\to0$. The latter limit is therefore singular as $\eps\to0$. For each $\eps>0$, we obtain an exact controllability of the transport solution on a convex sub-domain while UCP does not hold. In the limit `$\eps=0$' of the diffusion equation, the control is only approximate and a result of the UCP.

Of course, the above exact control has to become unstable in the limit $\eps\to0$. This instability is analyzed in the next section.

\subsection{Example of boundary control in the diffusive regime}
\label{sec:diffcontrol}
In this subsection the domain $X$ is the annulus $B(0,2)\b \overline{B(0,1)}$, and the velocity space $V$ is the $(d-1)$-dimensional unit sphere $\S^{d-1}$. 
We assume that the absorption and scattering coefficients are constant: $\sigma=\ep^{-1}$ and $k=|V|^{-1}\ep^{-1}$. Hence the linear Boltzmann equation is 
$$
v\cdot \nabla_x u(x,v)+{1\over \ep}(u(x,v)-\bar u(x))=0,\ (x,v)\in X\times V.
$$ 
Here $\bar u$ is the mean value of $u$ over $V$.

We also slightly change the setting of the previous subsection: $Z_0$ is the inner ball $B(0,1)$ of center 0 and radius 1 and the layers $Y_{k,N}$ are now the annuli of thickness ${1\over N}$ $B(0,{k\over N}+1)\b\overline{B(0,{k-1\over N}+1)}$ and $Z_{k\over N}=B(0,{k\over N}+1)$ while the sets $\mG_-({k\over N},{k-1\over N})$ are defined by \eqref{defg}.
In this new setting, Lemma \ref{lem_phi} is replaced by the explicit estimates: For $(x,v)\in Y_{k,N}\times V$
$$
\tau_k(x,v)\in [0,{2\over N}\sqrt{2N+2k-1}],\ \sup_{N\in \N\atop 1\le k\le N}\|\tau_k\|_{L^\infty(Y_{k,N}\times V)}\le {4\over \sqrt{N}}.
$$
Conditions \eqref{i70} and \eqref{i70b} are satisfied inside any layer $Y_{k,N}$ when
$$
\|\sigma_s\|_\infty=\|\kappa_p(x)\|_{L^\infty(X)}^{1\over p}= \ep^{-1}< {\sqrt{N}\over 4}<{e^{-\|\tau\sigma\|_\infty}\over \|\tau\|_\infty},
$$
or equivalently when $N> {16 \over \ep^2}$. Therefore we set now
$$
N=\lfloor 16\ep^{-2}\rfloor +1.
$$

We construct a solution $u$ as in the proof of Theorem \ref{thm_peel}. We set $\beta=0$ on $\pa Z_0\times V$ which means that our constructed solution vanishes in the inner ball $Z_0$, and we place a boundary source $\phi$ at the boundary of the first layer:
$$
u_{|\mG_-({1\over N},0)}=\phi
$$
where $\phi\in L^\infty\big(\mG_-({1\over N},0)\big)$. The incoming boundary condition on each other layers $Y_{k,N}$ is given by 
$$
u_{|\mG_-({k\over N},{k-1\over N})}=0,\ k=2\ldots N.
$$

The next Lemma describes the solution $u$ at the boundaries of the layers.
For $k=1\ldots N$, $s={k\over N}$, we define
\begin{equation}
\mU_{+,s}=\{(x,v)\in \Gamma_+(Z_s)\ |\ (x-t v,v) \in \mG_-({1\over N},0) \textrm{ for some positive }t\},
\end{equation}
and 
$$
d_{-,{1\over N}}=\tau_-(Z_{1\over N}),
$$
as well as 
$$
d_{-,s}(x,v)=\tau_-(Z_s\b\bar Z_{1\over N})(x,v)+\tau_-(Z_{1\over N})(x-\tau_-(Z_s\b\bar Z_{1\over N})(x,v)v,v)\textrm{ for }(x,v)\in \mU_{+,s},\ k\ge 2.
$$
Also, $C_p$ and $C_{p,\ep}$, $1\le p<d$, denote the constants:
\begin{eqnarray}
C_p=\sup_{k=1\ldots N}\|A_{\mC_k}\|_{\mathcal L\big(L^p(\mC_k,d\xi),L^p(\Gamma(Y_{k,N})\b \mC_k,d\xi)\big)},\ \nonumber \\
C_{p,\ep}=\sup_{k=1\ldots N}\vertiii{\big[(I-T_{\mC_k}^{-1}K)^{-1}T_{\mC_k}^{-1}KL_{\mC_k}\Big]_{|\Gamma(Z_{k\over N})\b\mC_k}}\label{B0}
\end{eqnarray}
where $\vertiii{.}$ denotes either the uniform norm of linear bounded operators from $L^1(\mC_k,d\xi)$ to $L^p(\Gamma(Z_{k\over N})\b\mC_k,d\xi)$ in dimension $d=2$
or the uniform norm of linear bounded operators from $L_*(\Gamma(Z_{k\over N}))$ to $L^p(\Gamma(Z_{k\over N})\b\mC_k,d\xi)$ in dimension $d\ge 3$. 

\begin{lemma}
\label{lem_trace_layer}
Let $1\le p<d$. For $k=1\ldots N$, $s={k\over N}$, we have
$$
u_{|\Gamma(Z_s)}=f_0^{(k)}+f_1^{(k)}, \qquad \mbox{ with } \quad
$$
\begin{eqnarray}
f_0^{(k)}(x,v)&=&\exp^{\tau_+(Z_s\b\bar Z_{1\over N})(x,v)\over \ep}\phi(x+\tau_+(Z_s\b\bar Z_{1\over N})(x,v)v,v)  \nonumber \\ && +\chi_{\mU_{+,s}}(x,v)e^{-{d_{-,s}(x,v)\over \ep}}\phi(x-d_{-,s}(x,v)v,v)\label{f0i}
\end{eqnarray}
for $(x,v)\in \Gamma(Z_s)$ and 
$$
f_1^{(k)}\in L^p\Big(\Gamma(Z_s), d\xi\Big),
$$
where the source $\phi\in L^\infty\Big(\mG_-({1\over N},0)\Big)$ is extended by $0$ on $\cup_{l=1}^N\pa Z_{l\over N}\times V$. Both $f_0^{(k)}$ and $f_1^{(k)}$ vanish on $\mG_-(s,s-{1\over N})$ when $k\ge 2$.

In addition 
\begin{equation}
\|f_0^{(k)}\|_k\le \big(1+e^{2\sqrt{(s-{1\over N})(2+s+{1\over N})}\over\ep}\big)\|\phi\|_1, \label{f0e}
\end{equation}
\begin{equation}
\|f_1^{(k)}\|_{L^p\big(\Gamma(Z_s)),d\xi\big)}\le  C_{p,\ep}C_p \big(C_p^{k-1}+{C_p^{k}-1\over C_p-1}(1+e^{2\sqrt{3+{2\over N}}\over\ep})\big)\|\phi\|_1,\label{f1e}
\end{equation}
where $\|\cdot\|_k=\|\cdot\|_{L^1(\Gamma(Z_s),d\xi)}$ in dimension $d=2$ and $\|\cdot\|_k=\|\cdot\|_{L_*(\Gamma(Y_{k,N}))}$ in dimension $d\ge 3$. 
\end{lemma}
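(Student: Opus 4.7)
The plan is to induct on $k\in\{1,\ldots,N\}$, simultaneously establishing the decomposition $u|_{\Gamma(Z_s)}=f_0^{(k)}+f_1^{(k)}$ and the bounds \eqref{f0e}, \eqref{f1e}. In each layer $Y_{k,N}$ the solution obeys $u_k=L_{\mC_k}g_k+(I-T_{\mC_k}^{-1}K)^{-1}T_{\mC_k}^{-1}KL_{\mC_k}g_k$, which is well posed because the choice $N=\lfloor 16\ep^{-2}\rfloor+1$ enforces \eqref{i20a} on each layer. The datum $g_k$ equals $\phi$ on $\mG_-(1/N,0)$ together with $0$ on $\pa Z_0\times V$ for $k=1$, and, by the inductive hypothesis, $f_0^{(k-1)}+f_1^{(k-1)}$ on $\Gamma(Z_{(k-1)/N})$ together with $0$ on $\mG_-(k/N,(k-1)/N)$ for $k\ge 2$.

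In the base case $k=1$, the restriction of $L_{\mC_1}g_1$ to $\Gamma(Z_{1/N})$ equals $\phi$ on $\mG_-(1/N,0)$ and $e^{-\tau_-(Z_{1/N})(\cdot)/\ep}\phi(\cdot)$ on $\mU_{+,1/N}$, and vanishes elsewhere since such rays trace either to $\beta=0$ on $\pa Z_0$ or to the zero-data parametrization on $\mC_{+,1}$. Noting $\tau_+(Z_s\b\bar Z_{1/N})=0$ and $d_{-,1/N}=\tau_-(Z_{1/N})$ at $s=1/N$, this reproduces $f_0^{(1)}$ exactly, while Lemma \ref{lem_1p} controls the scattered trace by $\|f_1^{(1)}\|_{L^p(\Gamma(Z_{1/N}),d\xi)}\le C_{p,\ep}\|\phi\|_1$, compatible with \eqref{f1e} at $k=1$.

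For the inductive step $k\to k+1$, split the albedo as $A_{\mC_{k+1}}=L_{\mC_{k+1}}|_{\Gamma\b\mC_{k+1}}+\mathcal{S}_{k+1}$, where $\mathcal{S}_{k+1}:=[(I-T_{\mC_{k+1}}^{-1}K)^{-1}T_{\mC_{k+1}}^{-1}KL_{\mC_{k+1}}]|_{\Gamma\b\mC_{k+1}}$. By linearity and the inductive hypothesis, $L_{\mC_{k+1}}(f_0^{(k)})|_{\Gamma\b\mC_{k+1}}$ augmented by the zero grazing datum on $\mG_-((k+1)/N,k/N)$ assembles into $f_0^{(k+1)}$: for outgoing $(x,v)\in\mU_{+,(k+1)/N}$, backward propagation through $Y_{k+1,N}$ reaches an inner point in $\mU_{+,k/N}$ contributing $e^{-d_{-,k/N}/\ep}\phi$, with additive transit distances producing $e^{-d_{-,(k+1)/N}/\ep}\phi$; for non-grazing incoming $(x,v)\in\Gamma_-(Z_{(k+1)/N})\b\mG_-$, forward propagation yields the amplification $e^{+\tau_+(Z_s\b\bar Z_{1/N})/\ep}$, and $\phi$ is recovered precisely when the traced ray reaches the source $\mG_-(1/N,0)$, all other outcomes contributing zero via the extension of $\phi$ across intermediate boundaries. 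The remaining terms reorganize into $f_1^{(k+1)}=A_{\mC_{k+1}}(f_1^{(k)})|_{\Gamma\b\mC_{k+1}}+\mathcal{S}_{k+1}(f_0^{(k)})+0\cdot\chi_{\mG_-((k+1)/N,k/N)}$.

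Estimate \eqref{f0e} is then read off directly from \eqref{f0i}: the second term's attenuation factor is at most $1$ and its pullback preserves the relevant $L^1$ or $L_*$ norm by Liouville invariance on grazing rays, while the first term contributes the amplification bounded by $e^{2\sqrt{(s-1/N)(2+s+1/N)}/\ep}$, since $2\sqrt{(s-1/N)(2+s+1/N)}$ is the length of the longest chord of $Z_s\b\bar Z_{1/N}$ tangent to $\pa Z_{1/N}$. For \eqref{f1e}, Lemma \ref{lem_1p} together with the definition \eqref{B0} gives $\|f_1^{(k+1)}\|_{L^p}\le C_p\|f_1^{(k)}\|_{L^p}+C_{p,\ep}\|f_0^{(k)}\|_k$; combined with the uniform bound $\|f_0^{(k)}\|_k\le 1+e^{2\sqrt{3+2/N}/\ep}$ just established and a geometric-sum unrolling, this recovers \eqref{f1e}. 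The main obstacle is the geometric identification $L_{\mC_{k+1}}f_0^{(k)}+0\cdot\chi_{\mG_-((k+1)/N,k/N)}=f_0^{(k+1)}$, which requires precise tracking of rays across nested concentric annular layers, additive composition of attenuation/amplification exponents along straight-line segments, and exploitation of the zero-extension of $\phi$ across all interior layer boundaries other than $\mG_-(1/N,0)$.
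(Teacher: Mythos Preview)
Your proof follows essentially the same inductive architecture as the paper's: the same base case, the same splitting $f_0^{(k+1)}=L_{\mC_{k+1}}f_0^{(k)}$ and $f_1^{(k+1)}=A_{\mC_{k+1}}f_1^{(k)}+\mathcal S_{k+1}f_0^{(k)}$, and the same recursion $\|f_1^{(k+1)}\|_{L^p}\le C_p\|f_1^{(k)}\|_{L^p}+C_{p,\ep}\|f_0^{(k)}\|_k$ leading to \eqref{f1e}.

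The one substantive difference is in how you reach \eqref{f0e}. You bound the two pieces of \eqref{f0i} separately, invoking ``Liouville invariance'' for the pullback and bounding the amplification by the tangent-chord length. The paper instead observes that the iterated ballistic maps compose into a single ballistic map on the full annulus $Z_{s}\setminus\bar Z_{1/N}$, namely $f_0^{(k)}=[L_{\mC}f_0^{(1)}]_{|\Gamma(Z_{s})}$ with $\mC$ defined on that annulus, and then applies the $L_*\to L_*$ bound for $[L_\mC]_{|\Gamma}$ from Lemma~\ref{lem_1p}, which directly produces the factor $1+e^{\|\tau\|_\infty\|\sigma\|_\infty}=1+e^{2\sqrt{(s-1/N)(2+s+1/N)}/\ep}$. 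Your route works cleanly in $d=2$ (where $\|\cdot\|_k$ is $L^1$ and the measure-preservation argument is standard), but in $d\ge3$ the $L_*$ norm is a supremum over $2$-planes of curve integrals, and ``Liouville invariance'' alone does not obviously control it; what is actually needed is exactly the plane-by-plane reduction carried out in the proof of Lemma~\ref{lem_1p}. Also, your geometric justification is slightly off: for rays in the first term that actually hit $\partial Z_{1/N}$ (hence contribute), the travel time $\tau_+(Z_s\setminus\bar Z_{1/N})$ is at most $\sqrt{(s-1/N)(2+s+1/N)}$, not twice that; the full tangent chord $2\sqrt{\cdots}$ arises in the paper only because Lemma~\ref{lem_1p} uses the crude global bound $e^{\|\tau\|_\infty\|\sigma\|_\infty}$. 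This does not affect correctness (your sharper exponent would still imply \eqref{f0e}), but the stated reason is inaccurate.
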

This lemma is proved in appendix \ref{sec:proofeps}.

\paragraph{Construction of the source.} Let $\eta\in (0,1)$ and $\rho\in C_0^\infty(\R,\R)$, $\rho\ge 0$, ${\rm supp}\rho\subseteq(-1,1)$ with $\int_{-1}^1\rho(s)ds=1$ and $\rho$ even. We set 
$$
\rho_\eta(x,v)= \eta^{-1}\rho\big({1+{1\over 2N}-|x-(x\cdot v) v|\over \eta}\big),\ (x,v)\in \R^d\times V,
$$
and define
$$
\mU_\eta=\{(x,v)\in \Gamma_-(Z_1)\ |\ |x-(x\cdot v)v|\in (-\eta+1+{1\over 2N},\eta+1+{1\over 2N} ) \}.
$$
When $\eta\le {1\over 2N}$, then any ray $x+\R v$, $(x,v)\in {\rm supp}\rho_\eta$, intersects the ball $Z_{1\over N}$ without penetrating $Z_0$.
We denote by $u_\eta$ the transport solution in Lemma \ref{lem_trace_layer} where ``$\phi$"$=\phi_\eta=[\rho_\eta]_{|\Gamma_-(Z_{1\over N})}$.

We obtain the following result.
\begin{theorem}
\label{thm_eta}
Let $N\ge 2$. The source $\phi_\eta$ satisfies
\begin{equation}
|V||\S^{d-2}|\le \|\phi_\eta\|_{L^1(\Gamma_-(Z_{1\over N}),d\xi)}\le |V||\S^{d-2}|\big({3\over 2}\big)^{d-2}.\label{B30a}
\end{equation}
(In dimension $d=2$ this is an equality.)
When $d=2$ and $1<p<2$, $q^{-1}+p^{-1}=1$ and   
\begin{equation}
\eta\le {e^{ q\over 2\ep}\over C C_{p,\ep}^qC_p^q \big(C_p^{N-1}+{C_p^N-1\over C_p-1}(1+e^{4\over\ep})\big)^q},\label{B30b}
\end{equation}
for some universal constant $C$ then
\begin{equation}
\int_{\mU_\eta} u_\eta d\xi\ge e^{ 1\over 2\ep}|V||\S^{d-2}|.\label{B30c}
\end{equation}
When $d\ge 3$ and  $2<p<d$, $2^{-1}<q^{-1}<1-d^{-1}$ and  
\begin{equation}
\eta\le {e^{ q\over (2-q)\ep}(|V||\S^{d-2}|)^{2q\over 2-q}\over \big(C^{1\over q} \tilde C C_{p,\ep}C_p\big)^{2q\over 2-q}\big(C_p^{N-1}+{C_p^N-1\over C_p-1}(1+e^{4\over\ep})\big)^{2q\over 2-q}} \label{B30d}
\end{equation}
for some universal constant $C$ and a constant $\tilde C$ which depends only on $\rho$, then the lower bound \eqref{B30c} holds.

\end{theorem}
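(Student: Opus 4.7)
The plan is to decompose $u_\eta|_{\Gamma(Z_1)} = f_0^{(N)} + f_1^{(N)}$ via Lemma~\ref{lem_trace_layer} with $k = N$, $s = 1$, extract a large ballistic contribution from $f_0^{(N)}$ on $\mU_\eta$, and control the scattered remainder $f_1^{(N)}$ by H\"older's inequality; the required smallness of $\eta$ in \eqref{B30b}/\eqref{B30d} will then be exactly what forces the scattered term below half of the ballistic one.

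To obtain \eqref{B30a}, parametrize lines through $X$ by $(v, y) \in V \times v^{\perp}$ with $|y|$ equal to the impact parameter; then $d\xi = dy\,dv$ on the corresponding slice of $\Gamma_-(Z_{1/N})$, and since $\rho_\eta$ depends only on $|y|$, polar coordinates in $v^\perp$ give
\begin{equation*}
\|\phi_\eta\|_{L^1(\Gamma_-(Z_{1/N}),d\xi)} = |V||\S^{d-2}| \int_0^{1+1/N} \eta^{-1}\rho\bigl(\tfrac{1+1/(2N) - b}{\eta}\bigr) b^{d-2}\, db.
\end{equation*}
For $\eta \leq 1/(2N)$ the integrand is supported in $b \in [1, 1+1/N] \subset [1, 3/2]$ (using $N \geq 2$), hence $1 \leq b^{d-2} \leq (3/2)^{d-2}$; combined with $\int \eta^{-1}\rho((c - b)/\eta)\,db = 1$ this gives the two-sided bound.

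Next, on $\mU_\eta \subset \Gamma_-(Z_1)$ the indicator $\chi_{\mU_{+,1}}$ in the second term of \eqref{f0i} vanishes, because $\mU_{+,1} \subset \Gamma_+(Z_1)$ is disjoint from $\Gamma_-(Z_1)$. For $(x,v) \in \mU_\eta$ the impact parameter $b$ lies in $(1, 1+1/N)$, so the characteristic meets $\pa Z_{1/N}$ at $y_1 = x + \tau_+(Z_1 \setminus \bar Z_{1/N})(x,v)\,v$ and $(y_1, v) \in \mG_-(1/N, 0)$. A direct computation shows $\tau_+(Z_1\setminus \bar Z_{1/N})(x, v) = \sqrt{4-b^2} - \sqrt{(1+1/N)^2 - b^2}$ is monotone increasing in $b$, with minimum at $b = 1$ equal to $\sqrt{3} - \sqrt{2/N + 1/N^2} \geq 1/2$ for $N \geq 2$. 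Since the line parametrization identifies $\mU_\eta$ and the support of $\phi_\eta$ in $\mG_-(1/N, 0)$ as parametrizations of the same set of lines with the common measure $dy\,dv$,
\begin{equation*}
\int_{\mU_\eta} f_0^{(N)}\,d\xi \geq e^{1/(2\eps)} \int_{\mG_-(1/N,0)} \phi_\eta\, d\xi = e^{1/(2\eps)} \|\phi_\eta\|_{L^1} \geq e^{1/(2\eps)}\,|V||\S^{d-2}|.
\end{equation*}
For the scattered piece, H\"older's inequality and $|\mU_\eta| \leq 2\eta|V||\S^{d-2}|(3/2)^{d-2}$ (from the same parametrization) combine with \eqref{f1e} and $2\sqrt{(1-1/N)(3+1/N)} \leq 4$ to yield
\begin{equation*}
\Bigl|\int_{\mU_\eta} f_1^{(N)}\,d\xi\Bigr| \leq \bigl(2\eta|V||\S^{d-2}|(3/2)^{d-2}\bigr)^{1/q}\,C_{p,\eps}C_p\Bigl(C_p^{N-1} + \tfrac{C_p^N-1}{C_p-1}(1 + e^{4/\eps})\Bigr)\|\phi_\eta\|_1.
\end{equation*}

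When $d = 2$, $\|\phi_\eta\|_1 = \|\phi_\eta\|_{L^1}$ is already controlled by \eqref{B30a}, and demanding the scattered term be at most $\tfrac12 e^{1/(2\eps)}|V||\S^{d-2}|$ yields exactly \eqref{B30b}. When $d \geq 3$, $\|\phi_\eta\|_1 = \|\phi_\eta\|_{L_*}$ must be estimated separately: fixing $v_-$ and $(\omega, y) \in {\mathcal M}_{v_-}$, parametrize the circle $\gamma_{v_-,\omega,y} = P \cap \pa Z_{1/N}$ by $b(\theta) = \sqrt{|y|^2 + ((1+1/N)^2 - |y|^2)\sin^2\theta}$ and perform the successive changes of variables $\theta \to b$ and $u = (1 + 1/(2N) - b)/\eta$ to reduce the inner line integral to $C\int_{-1}^{1} \rho(u)(\delta(|y|) + \eta(1-u))^{-1/2}\,du$, where $\delta(|y|) = \max(1 + 1/(2N) - \eta - |y|, 0)$. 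A case distinction on $\delta \gtreqless \eta$ shows this is uniformly bounded in $|y|$ by $\tilde C/\sqrt{\eta}$ with $\tilde C$ depending only on $\rho$, so $\|\phi_\eta\|_{L_*} \leq \tilde C|V|/\sqrt{\eta}$. Substituting into the H\"older estimate leaves a factor $\eta^{1/q - 1/2}$ on the left, and the condition ``scattered $\leq$ ballistic/$2$'' rearranges algebraically to \eqref{B30d}, producing exponent $q/((2-q)\eps)$ and power $(2q)/(2-q)$. The principal technical obstacle is precisely this $L_*$-norm estimate: the integrand on the boundary circle develops a borderline-integrable singularity $1/\sqrt{b-|y|}$ near $b = |y|$, and the sharp $\eta^{-1/2}$ bound must be extracted uniformly in $|y|$ — it is this singularity that is responsible for the worse exponent in \eqref{B30d} compared to \eqref{B30b}.
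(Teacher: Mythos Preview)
Your proposal is correct and follows essentially the same route as the paper: split $u_\eta|_{\Gamma(Z_1)}=f_0^{(N)}+f_1^{(N)}$ via Lemma~\ref{lem_trace_layer}, bound the ballistic integral below using $\tau_+(Z_1\setminus\bar Z_{1/N})\ge 1-\tfrac1N\ge\tfrac12$ on $\mU_\eta$, control $f_1^{(N)}$ by H\"older together with \eqref{f1e}, and in dimension $d\ge3$ establish $\|\phi_\eta\|_{L_*}\lesssim\eta^{-1/2}$ by the same change of variables on the boundary circle. The only cosmetic differences are that the paper parametrises $\partial Z_s$ by spherical angles rather than by impact parameter, and writes out the $L_*$ computation more explicitly; your sketched integrand $(\delta(|y|)+\eta(1-u))^{-1/2}$ is not literally correct in the regime $\delta(|y|)=0$ (the upper integration limit drops below $1$), but the uniform bound $\sup_\sigma\int\rho(u)|u-\sigma|^{-1/2}du<\infty$ that you invoke covers this case as well, so the conclusion stands.
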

The theorem is also proved in appendix \ref{sec:proofeps}. Its interpretation is as follows. For any source $\phi_\eta$ generating a solution of order $O(1)$ in the vicinity of the ball $B(0,1)$, while the solution is exactly $0$ inside that ball, then the transport solution inside the ball $B(0,2)$ is necessarily exponentially large (see \eqref{B30c}) close to $|x|=2$. Equivalently, by linearity of the transport equation, the above rescaled control of order $O(1)$ at the boundary $|x|=2$ generates a source in the vicinity of $|x|=1$ of order at most $e^{-\frac1{2\eps}}$. This provides a quantitative example of the instability of the boundary control in the diffusive regime that is consistent with the unique continuation principle that applies in the diffusion limit.

\section{Control of outgoing boundary conditions}
\label{sec:outgoing}

We now consider the control problem aiming to find incoming conditions on $\Gamma_-$ such that the outgoing conditions on $\Gamma_+$ are prescribed. This is a question on the range of the albedo operator. We obtained in an earlier section that the albedo operator was a Fredholm operator with vanishing index. We now show that the dimensions of its kernels and co-kernels, which have to be equal, do not necessarily vanish. In that case, some outgoing conditions cannot be controlled from $\Gamma_-$, answering the control problem negatively. By duality, this also shows the existence of non-trivial incoming conditions such that nothing comes out of the domain (the trace of the solution on $\Gamma_+$ vanishes.

\subsection{Selfadjoint operators}
In this section we assume that $p=2$ and consider the setting with $\mC=\Gamma_-$ and hence $\Gamma\b\mC=\Gamma_+$. We denote by $L^2_{\rm even}(X\times V)$, resp. $L^2_{\rm odd}(X\times V)$, the closed subspace of $L^2(X\times V)$ that consists of functions that are even, resp. odd, in the $v$-variable:
$$L^2_{\rm even}(X\times V)=\{f\in L^2(X\times V)\ |\ f(x,v)=f(x,-v) \textrm{ a.e. }(x,v)\in X\times V\},$$
and we denote by $P_{\rm even}$, resp. $P_{\rm odd}$, the orthogonal projection onto that subspace $L^2_{\rm even}(X\times V)$, resp. $L^2_{\rm odd}(X\times V)$. 

We start with  the following lemma:
\begin{lemma}
Assume that 
\begin{equation}
\sigma(x,v)=\sigma(x,-v) \textrm{ a.e. }(x,v)\in X\times V.\label{H1a}
\end{equation}
Then
\begin{equation}
\big(T_{\Gamma_+}^{-1}\big)^*f(x,-v)=T_{\Gamma_+}^{-1}f(x,v),\ \textrm{ a.e. }(x,v)\in X\times V,\label{i100}
\end{equation}
and $f\in L^2(X\times V)$. As a consequence 
$$P_{\rm even}\big(T_{\Gamma_+}^{-1}\big)^*=P_{\rm even}T_{\Gamma_+}^{-1},\ P_{\rm odd}\big(T_{\Gamma_+}^{-1}\big)^*=-P_{\rm odd}T_{\Gamma_+}^{-1}$$ 
and $P_{\rm even}T_{\Gamma_+}^{-1}P_{\rm even}$ is a selfadjoint operator in $L^2(X\times V)$ and $P_{\rm odd}T_{\Gamma_+}^{-1}P_{\rm odd}$ is skew-Hermitian.
\end{lemma}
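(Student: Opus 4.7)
The strategy is a direct pointwise computation: I would compare the explicit formula for $(T_{\Gamma_+}^{-1})^*$ established in section \ref{sec:forward} with that of $T_{\Gamma_+}^{-1}$ after substituting $v\mapsto -v$, and show that hypothesis \eqref{H1a} reduces one to the other at the level of the integrating factor $E_+$. The projection identities and the (skew-)selfadjointness claims then follow by elementary algebra of the parity projections and the velocity-reflection operator $R$ defined by $Rf(x,v)=f(x,-v)$, which satisfies $R^*=R=R^{-1}$ and $R^2=I$.

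For the core identity \eqref{i100}, I specialize the formula for $(T_{\mC}^{-1})^*$ to $\mC=\Gamma_+$: only the second branch survives, yielding
\[
(T_{\Gamma_+}^{-1})^* f(x,v) \;=\; -\int_0^{\tau_-(x,v)} E_+(x-tv,v,t)\, f(x-tv,v)\, dt.
\]
Evaluating at $(x,-v)$ and using $\tau_-(x,-v)=\tau_+(x,v)$ rewrites this as an integral on $[0,\tau_+(x,v)]$ with weight $E_+(x+tv,-v,t)$. The crucial simplification is
\[
E_+(x+tv,-v,t)=\exp\!\Big(\!\int_0^t \sigma(x+(t-s)v,-v)\,ds\Big)=\exp\!\Big(\!\int_0^t \sigma(x+rv,v)\,dr\Big)=E_+(x,v,t),
\]
obtained by the change of variable $r=t-s$ combined with \eqref{H1a}. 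Comparison with $T_{\Gamma_+}^{-1} f(x,v)=-\int_0^{\tau_+(x,v)} E_+(x,v,t)\, f(x+tv,v)\,dt$ then yields \eqref{i100}.

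For the consequences, I would read \eqref{i100} as the operator equality $R\,(T_{\Gamma_+}^{-1})^*=T_{\Gamma_+}^{-1}$, i.e.\ $(T_{\Gamma_+}^{-1})^*=R\,T_{\Gamma_+}^{-1}$. Since $P_{\rm even}=\frac12(I+R)$ and $P_{\rm odd}=\frac12(I-R)$ satisfy $P_{\rm even}R=RP_{\rm even}=P_{\rm even}$ and $P_{\rm odd}R=RP_{\rm odd}=-P_{\rm odd}$, premultiplying by either projection immediately gives the two displayed projection identities. Using self-adjointness of $P_{\rm even}$ and $P_{\rm odd}$,
\[
(P_{\rm even}T_{\Gamma_+}^{-1}P_{\rm even})^*=P_{\rm even}(T_{\Gamma_+}^{-1})^*P_{\rm even}=P_{\rm even}T_{\Gamma_+}^{-1}P_{\rm even},
\]
and likewise $(P_{\rm odd}T_{\Gamma_+}^{-1}P_{\rm odd})^*=-P_{\rm odd}T_{\Gamma_+}^{-1}P_{\rm odd}$, establishing the last two assertions.

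The main obstacle is the change-of-variable bookkeeping in the integrating factor: tracking the signs, the substitution $r=t-s$, and the identification $\tau_-(x,-v)=\tau_+(x,v)$ must be done consistently so that hypothesis \eqref{H1a} can be invoked at precisely the right place; everything subsequent is formal Hilbert-space algebra.
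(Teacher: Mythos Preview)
Your proposal is correct and follows essentially the same route as the paper: recall the explicit formulas for $T_{\Gamma_+}^{-1}$ and $(T_{\Gamma_+}^{-1})^*$, use $\tau_-(x,-v)=\tau_+(x,v)$, and reduce the comparison to the integrating-factor identity $E_+(x+tv,-v,t)=E_+(x,v,t)$, which is exactly the single computation the paper records. Your treatment of the consequences via the reflection operator $R$ and the relations $P_{\rm even}R=P_{\rm even}$, $P_{\rm odd}R=-P_{\rm odd}$ spells out in clean operator language what the paper leaves implicit.
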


\begin{proof}
We recall that
$$
T_{\Gamma_+}^{-1}f(x,v)=
-\int_0^{\tau_+(x,v)}E_+(x,v,t)f(x+tv,v)dt,\ (x,v)\in X\times V,
$$
and 
$$
\big(T_{\Gamma_+}^{-1}\big)^*f(x,v)=
-\int_0^{\tau_-(x,v)}E_+(x-tv,v,t)f(x-tv,v)dt\ (x,v)\in X\times V.
$$
From \eqref{H1a}, it follows that $E_+(x+tv,-v,t)=E_+(x,v,t)$, which proves the Lemma.
\end{proof}

Let $(x_0,y_0)\in X^2$, $x_0\not=y_0$, and let $\eta\in (0,{|x_0-y_0|\over 4})$ so that the Euclidean balls $B(x_0,\eta)$ and $B(y_0,\eta)$ centered at $x_0$ and $y_0$ with radius $\eta$ are included in $X$. Then let 
\begin{equation}
\psi=K(\chi_{B(x_0,\eta)}-\chi_{B(y_0,\eta)})=\big(\chi_{B(x_0,\eta)}-\chi_{B(y_0,\eta)}\big)\sigma'_s.\label{i101}
\end{equation}

Performing the change of variables ``$y=x+tv$", we have
\begin{eqnarray*}
&&\l T_{\Gamma_+}^{-1}\psi,\psi\r 
=-\int_{X\times V}\int_0^{\tau_+(x,v)}E_+(x,v,t)\psi(x+tv,v)dt \psi(x,v)dx dv\\
&=&-\int_{X^2}{e^{|y-x|\int_0^1\sigma(x+\ep(y-x),\widehat{y-x})d\ep}\over |y-x|^{d-1}}\psi(y,\widehat{y-x}) \psi(x,\widehat{y-x})dx dy\\
&=&-\int_{B(x_0,\eta)^2\cup B(y_0,\eta)^2}{e^{|y-x|\int_0^1\sigma(x+\ep(y-x),\widehat{y-x})d\ep}\sigma'_s(x,\widehat{y-x})\sigma'_s(y,\widehat{y-x})dxdy\over |y-x|^{d-1}}\\
&&+\int_{\big(B(x_0,\eta)\times B(y_0,\eta)\big)\cup\big(B(y_0,\eta)\times B(x_0,\eta)\big)}
\hskip-1cm{e^{|y-x|\int_0^1\sigma(x+\ep(y-x),\widehat{y-x})d\ep}\sigma'_s(x,\widehat{y-x})\sigma'_s(y,\widehat{y-x})dxdy\over |y-x|^{d-1}}\\
&\ge &-4\eta e^{2\eta\|\sigma\|_\infty} {\rm Vol}(B(0,\eta))\sup_{W_\eta} (\sigma'_s)^2
+2e^{|x_0-y_0|\inf \sigma\over 2}{\rm Vol}(B(0,\eta))^2{2^{d-1}\over |x_0-y_0|^{d-1}}\inf_{W_\eta} (\sigma'_s)^2,
\end{eqnarray*}
where we have defined
$$
W_\eta=(B(x_0,\eta)\cup B(y_0,\eta))\times V.
$$
We used that 
\begin{eqnarray*}
&&\int_{B(x_0,\eta)^2}{dx dy\over|x-y|^{d-1}}\le 2\eta{\rm Vol}(B(0,\eta)),\\ 
&&\int_{\big(B(x_0,\eta)\times B(y_0,\eta)\big)}{dx dy\over |y-x|^{d-1}}dx dy\ge {\rm Vol}(B(0,\eta))^2{2^{d-1}\over |x_0-y_0|^{d-1}}. 
\end{eqnarray*}
Hence we arrive at the conclusion that
\begin{eqnarray}
\l T_{\Gamma_+}^{-1}\psi,\psi\r 
&\ge& {2^dc_d^2\eta^{2d}\over |x_0-y_0|^{d-1}}e^{|x_0-y_0|\inf \sigma\over 2}\inf_{W_\eta} (\sigma'_s)^2\label{i102}\\
&&\times\Big(1-{2^{2-d}|x_0-y_0|^{d-1}\sup_{W_\eta} (\sigma'_s)^2\over c_d\eta^{d-1}\inf_{W_\eta} (\sigma'_s)^2}e^{2\eta\|\sigma\|_\infty-{|x_0-y_0|\inf \sigma\over 2}}\Big),\nonumber
\end{eqnarray}
where $c_d={\rm Vol}(B(0,1))$.
Therefore when the right-hand side of the above equality is positive then the operator $P_{\rm even}T_{\Gamma_+}^{-1}P_{\rm even}$ has a positive eigenvalue by the min-max principle.

We also assume that 
$k\in C\big(\bar X, L^2(V^2)\big)$ (see \eqref{hyp_comp}). In particular
\begin{equation}
{\rm dim}({\rm ker}A)={\rm dim}\big({\rm ker}(I-KT_{\Gamma_+}^{-1})\big).\label{i110}
\end{equation}

\begin{lemma}
Assume that
\begin{equation}
k(x,v',v)=k(x,v,v')=k(x,v',-v)\textrm{ a.e. } (x,v',v)\in X\times V^2.\label{H1c}
\end{equation}
Then $K$ is a bounded selfadjoint operator in $L^2(X\times V)$ and  $K$ maps $L^2(X\times V)$ to $L^2_{\rm even}(X\times V)$.
\end{lemma}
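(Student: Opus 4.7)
The boundedness of $K$ on $L^2(X\times V)$ is already in hand: the standing continuity assumption $k\in C\big(\bar X, L^2(V^2)\big)$ implies that the function $\kappa_2(x)=\int_V\big(\int_V|k(x,v',v)|^2 dv'\big)dv$ is in $L^\infty(X)$, so hypothesis \eqref{i10a} for $p=2$ is satisfied and Lemma \ref{lem_K} yields $K\in\mathcal L(L^2(X\times V))$.

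For selfadjointness, recall from the display defining $K^*$ in section \ref{sec:forward} that
\begin{equation*}
K^*f(x,v)=\int_V k(x,v,v')f(x,v')dv', \qquad f\in L^2(X\times V).
\end{equation*}
The first symmetry in \eqref{H1c}, namely $k(x,v',v)=k(x,v,v')$, gives $Kf(x,v)=K^*f(x,v)$ pointwise almost everywhere, and hence $K=K^*$ on $L^2(X\times V)$.

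For the even-range statement, a direct computation using the second symmetry in \eqref{H1c} yields
\begin{equation*}
Kf(x,-v)=\int_V k(x,v',-v)f(x,v')dv'=\int_V k(x,v',v)f(x,v')dv'=Kf(x,v)
\end{equation*}
for a.e.\ $(x,v)\in X\times V$ and any $f\in L^2(X\times V)$, so $Kf\in L^2_{\rm even}(X\times V)$. No obstacle is anticipated: each assertion follows directly from the symmetry assumptions combined with the already established expressions for $K$ and $K^*$.
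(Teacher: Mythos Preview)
Your proof is correct; the paper in fact omits a proof of this lemma entirely, treating it as immediate from the symmetry assumptions and the standing hypothesis $k\in C(\bar X, L^2(V^2))$. Your argument spells out exactly the elementary verifications one would expect: boundedness via Lemma~\ref{lem_K}, selfadjointness from $k(x,v',v)=k(x,v,v')$ and the formula for $K^*$, and the even range from $k(x,v',-v)=k(x,v',v)$.
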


For the rest of this section we assume that hypotheses \eqref{H1a} and \eqref{H1c} hold and we assume that $K$ defines a positive operator:
$$
\l K\phi,\phi\r\ge 0,\ \phi\in L^2(X\times V).
$$
We denote by 
\begin{equation}\label{eq:R}
R= K^{\frac12}
\end{equation}
the non negative square root of the operator $K$. Note that $L^2_{\rm odd}(X\times V)\subseteq{\rm ker}K={\rm ker}R$.
We are interested in the existence of a positive eigenvalue for $RT_{\Gamma_+}^{-1}R$. 
Note that
\begin{equation}
\psi=R\phi,\ \phi:=R(\chi_{B(x_0,\eta)}-\chi_{B(y_0,\eta)}).
\end{equation}
Then 
\begin{equation*}
\|\phi\|_{L^2}^2=\l K(\chi_{B(x_0,\eta)}-\chi_{B(y_0,\eta)}), \chi_{B(x_0,\eta)}-\chi_{B(y_0,\eta)}\r
\le 2c_d\sup_{W_\eta}\sigma_s\eta^d.
\end{equation*}
Hence from \eqref{i102} it follows that
\begin{eqnarray}
{\l RT_{\Gamma_+}^{-1}R\phi,\phi\r\over \|\phi\|^2}&\ge &
{2^{d-1}c_d\eta^d\over |x_0-y_0|^{d-1}}e^{|x_0-y_0|\inf \sigma\over 2}{\inf_{W_\eta} \sigma_s^2\over \sup_{W_\eta}\sigma_s}
\label{i103}\\
&&\times
\Big(1-{2^{2-d}|x_0-y_0|^{d-1}\sup_{W_\eta} \sigma_s^2\over c_d\eta^{d-1}\inf_{W_\eta} \sigma_s^2}e^{2\eta\|\sigma\|_\infty-{|x_0-y_0|\inf \sigma\over 2}}\Big).\nonumber
\end{eqnarray}

\subsection{Non-controllability result}
Under the assumption
\begin{equation}
\sigma-\sigma_s\ge 0,\label{H1d}
\end{equation}
the transport equation is well-posed \cite{ES-ApplAnal-93} and the albedo operator is well-defined (here $\sigma_s=\sigma_s'$).

We also recall that pairs $(\sigma_i,k_i)\in L^\infty(X\times V)\times L^1(X\times V\times V)$, $i=1,2$, are said to be equivalent if there exists $\varphi\in {\tilde W}^\infty(X\times V)$, $\varphi>0$, $\varphi_{|\pa X\times V}=1$, so that
\begin{equation*}
\sigma_2(x,v)=\sigma_1(x,v)-v\cdot\nabla_x\ln\varphi(x,v),\ k_2(x,v',v)={\varphi(x,v)\over \varphi(x,v')}k_1(x,v',v).
\end{equation*}
Equivalent pairs define the same albedo operator \cite{ST-ProcAmer-09} when solvability conditions are satisfied.

\begin{theorem}
\label{thm_neg_eig}
Let $(M_1,M_2, M_3)\in (0,+\infty)^2$ and let $k\in C(\bar X,L^2(V^2))$ so that \eqref{H1c} holds and $K$ is non-negative and that
$$
M_1\le \sigma_s\le M_2 \textrm{ a.e. }
$$
Then there exists a constant $C\ge M_2$ that depends on $X$, $d$, $M_1,$ $M_2$ and $M_3$ so that for any $\sigma\in L^\infty(X\times V)$ satisfying \eqref{H1a},
\begin{equation}
 C\le \sigma\le C+M_3\textrm{ a.e.}\label{i105}
\end{equation}
the operator $KT_{\Gamma_+}^{-1}$ has a positive eigenvalue greater than or equal to 1. In particular for such a $\sigma$ then there exists $\lambda\in (0,1)$ so that the albedo operator defined with respect to the coefficients $(\sigma,\lambda k)$ (or any pair of coefficients equivalent to $(\sigma,\lambda k)$) has a non zero kernel. 
\end{theorem}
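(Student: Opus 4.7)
The strategy is to promote the quadratic form bound \eqref{i103} into an eigenvalue bound, then push it above $1$ by driving $\inf\sigma$ to infinity. I would fix distinct $x_0,y_0\in X$ and a radius $0<\eta<|x_0-y_0|/4$ small enough that $B(x_0,\eta)\cup B(y_0,\eta)\Subset X$; these choices depend only on $X$ and $d$. For any $(\sigma,k)$ satisfying the hypotheses with $C\le\sigma\le C+M_3$, I would insert $M_1\le\sigma_s\le M_2$ and the bounds on $\sigma$ into \eqref{i103} applied to $\phi=R(\chi_{B(x_0,\eta)}-\chi_{B(y_0,\eta)})$. The leading factor $e^{|x_0-y_0|C/2}$ then grows exponentially in $C$, while the correction term in brackets is $1-O\big(e^{(2\eta-|x_0-y_0|/2)C+2\eta M_3}\big)$ and tends to $1$ since $2\eta-|x_0-y_0|/2<0$. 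Thus for $C$ large enough (and $\ge M_2$ to ensure \eqref{H1d}), the Rayleigh quotient $\langle RT_{\Gamma_+}^{-1}R\phi,\phi\rangle/\|\phi\|^2$ exceeds $1$.

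For a bounded self-adjoint operator, $\sup\sigma(\cdot)$ equals the supremum of its Rayleigh quotient; therefore $\sup\sigma(RT_{\Gamma_+}^{-1}R)>1$. The non-zero spectrum of $RT_{\Gamma_+}^{-1}R$ coincides with that of $KT_{\Gamma_+}^{-1}$ via the classical identity $\sigma(AB)\setminus\{0\}=\sigma(BA)\setminus\{0\}$ taken with $A=R$, $B=RT_{\Gamma_+}^{-1}$, and $KT_{\Gamma_+}^{-1}$ is compact by the proposition of Section \ref{sec:albedo} (here $k\in C(\bar X,L^2(V^2))$). Hence the supremum is attained as a largest eigenvalue $\mu_{\max}>1$, which is simultaneously an eigenvalue of $KT_{\Gamma_+}^{-1}$. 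Rescaling $k\mapsto\lambda k$ scales $KT_{\Gamma_+}^{-1}$ by $\lambda$, so taking $\lambda=1/\mu_{\max}\in(0,1)$ makes $1$ an eigenvalue of $\lambda KT_{\Gamma_+}^{-1}$; by \eqref{i110}, $\dim\ker A\ge 1$ for the albedo operator associated to $(\sigma,\lambda k)$. Equivalent pairs share the same albedo operator, which yields the final assertion.

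The main obstacle is making the constant $C$ genuinely uniform in the data, i.e., depending only on $X$, $d$, $M_1$, $M_2$, $M_3$ and not on the particular $\sigma$ or $k$. All the intermediate quantities appearing in \eqref{i103} --- $\inf_{W_\eta}\sigma_s$, $\sup_{W_\eta}\sigma_s$, $\|\sigma\|_\infty$, $\inf\sigma$ --- must be estimated uniformly using only the a priori bounds in the statement, which is possible precisely because $x_0,y_0,\eta$ are chosen at the outset using only $X,d$. A minor side issue is the self-adjointness of $RT_{\Gamma_+}^{-1}R$, which one reads off by writing $\langle RT_{\Gamma_+}^{-1}R\phi,\phi\rangle=\langle P_{\rm even}T_{\Gamma_+}^{-1}P_{\rm even}R\phi,R\phi\rangle$ (using that $R\phi$ is even, since $L^2_{\rm odd}\subseteq\ker K=\ker R$) and invoking the self-adjointness of $P_{\rm even}T_{\Gamma_+}^{-1}P_{\rm even}$ established earlier.
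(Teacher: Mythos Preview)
Your proposal is correct and follows essentially the same route as the paper: fix $x_0,y_0,\eta$ depending only on $X,d$, feed the bounds $M_1\le\sigma_s\le M_2$ and $C\le\sigma\le C+M_3$ into \eqref{i103}, let $C\to\infty$ to force the Rayleigh quotient of the self-adjoint operator $RT_{\Gamma_+}^{-1}R$ above $1$, transfer the resulting eigenvalue $\mu_0>1$ to $KT_{\Gamma_+}^{-1}$ via $\sigma(AB)\setminus\{0\}=\sigma(BA)\setminus\{0\}$, and rescale by $\lambda=\mu_0^{-1}$ to invoke \eqref{i110}. Your extra care in justifying the self-adjointness of $RT_{\Gamma_+}^{-1}R$ (through $R$ ranging in $L^2_{\rm even}$ and the self-adjointness of $P_{\rm even}T_{\Gamma_+}^{-1}P_{\rm even}$) and the compactness needed to turn the spectral supremum into an attained eigenvalue only makes explicit what the paper takes for granted.
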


\begin{proof}
Fix $(x_0,y_0,\eta)$ as above and let $C\in (M_2,+\infty)$. Then for any $\sigma$ satisfying \eqref{i105} and for the function $\phi$ previously defined we have 
\begin{equation}
{\l RT_{\Gamma_+}^{-1}R\phi,\phi\r\over \|\phi\|^2}\ge 
{2^{d-1}c_d\eta^d e^{C|x_0-y_0|\over 2}\over |x_0-y_0|^{d-1}}{M_1^2\over M_2}
\Big(1-{2^{2-d}|x_0-y_0|^{d-1}M_2^2\over c_d\eta^{d-1}M_1^2}e^{{2\eta-|x_0-y_0|\over 2}C+2M_3\eta}\Big).
\end{equation}
The right-hand side goes to $\infty$ as $C\to +\infty$. Hence for $C$ large enough the right-hand side of the latter estimate is bigger than 1 and the selfadjoint operator $RT_{\Gamma_+}^{-1}R$ admits a positive eigenvalue $\mu_0$ bigger than one. And $\mu_0$ is also an eigenvalue for the operator $KT_{\Gamma_+}^{-1}=R^2T_{\Gamma_+}^{-1}$. Then rescaling $k$ by $\mu_0^{-1} k$ we obtain that $\mu_0^{-1}KT_{\Gamma_+}^{-1}$ has $1$ as an eigenvalue. By \eqref{i110} it follows that the albedo operator for the coefficients $(\sigma,\mu_0^{-1}k)$ has a nontrivial kernel.
\end{proof}

We have the following improvement of Theorem \ref{thm_neg_eig}.
\begin{theorem}
\label{thm_neg_eig2}
Let $(M_1,M_2, M_3)\in (0,+\infty)^3$, let $N\in \N$ and let $k\in C(\bar X,L^2(V^2))$ so that \eqref{H1c} holds and $K$ is non-negative and that
$$
M_1\le \sigma_s\le M_2 \textrm{ a.e. }
$$
Then there exists a constant $C\ge M_2$ that depends on $X$, $d$, $M_1,$ $M_2$, $M_3$ and $N$ so that for any $\sigma\in L^\infty(X\times V)$ satisfying \eqref{H1a},
\begin{equation}
 C\le \sigma\le C+M_3\textrm{ a.e.}\label{i106}
\end{equation}
the operator $KT_{\Gamma_+}^{-1}$ has $N$ positive eigenvalues greater or equal to 1. 
\end{theorem}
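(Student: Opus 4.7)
The plan is to apply the Courant--Fischer min-max principle to the compact self-adjoint operator $RT_{\Gamma_+}^{-1}R$ on $L^2(X\times V)$. Self-adjointness follows from the first lemma of this section since $R$ maps into $L^2_{\rm even}(X\times V)$ (indeed, \eqref{H1c} gives $K=P_{\rm even}KP_{\rm even}$, hence $R=P_{\rm even}RP_{\rm even}$), while $P_{\rm even}T_{\Gamma_+}^{-1}P_{\rm even}$ is self-adjoint. Because the nonzero spectra of $RT_{\Gamma_+}^{-1}R$ and $KT_{\Gamma_+}^{-1}$ coincide, it suffices to exhibit an $N$-dimensional subspace $V_N\subset L^2(X\times V)$ on which $\l RT_{\Gamma_+}^{-1}R\phi,\phi\r\geq\|\phi\|^2$.

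I generalize the one-pair construction of Theorem~\ref{thm_neg_eig} by aligning $N$ pairs of test balls along nearly-orthogonal axes through a common midpoint. Fix $N$ unit vectors $e_1,\ldots,e_N\in\R^d$ that are pairwise distinct and pairwise non-antipodal, and set
$\mu:=\min_{j\ne k}\min(|e_j-e_k|,|e_j+e_k|)>0$ and $\delta:=2-\max_{j\ne k}\max(|e_j-e_k|,|e_j+e_k|)>0$.
Pick $x_*\in X$ and $L>0$ with $\overline{B(x_*,2L)}\subset X$, then $\eta>0$ small enough that $2\eta<L\min(\mu,\delta,1)$. Define $x_k:=x_*-Le_k$, $y_k:=x_*+Le_k$, $F_k:=\chi_{B(x_k,\eta)}-\chi_{B(y_k,\eta)}$, $\phi_k:=RF_k$, $\psi_k:=KF_k$. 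The condition $2\eta<L\mu$ makes the $2N$ balls pairwise disjoint, so $\l\phi_k,\phi_j\r=\l KF_k,F_j\r=0$ for $j\ne k$. Hence $V_N:=\mathrm{span}(\phi_1,\ldots,\phi_N)$ has the $\phi_k$ as orthogonal basis, and the Rayleigh quotient of $RT_{\Gamma_+}^{-1}R$ on $V_N$ reduces to that of the symmetric matrix $M=(M_{kj})$, $M_{kj}:=\l T_{\Gamma_+}^{-1}\psi_k,\psi_j\r/(\|\phi_k\|\|\phi_j\|)$, giving $\lambda_N(RT_{\Gamma_+}^{-1}R)\geq\lambda_{\min}(M)$.

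The same change of variables $y=x+tv$ used to derive \eqref{i102} yields
\[
M_{kj}\|\phi_k\|\|\phi_j\|=I(x_j,y_k)+I(y_j,x_k)-I(x_j,x_k)-I(y_j,y_k),
\]
where $I(a,b):=\int_{B(a,\eta)\times B(b,\eta)}|y-x|^{1-d}E_+(x,\widehat{y-x},|y-x|)\sigma'_s(x,\widehat{y-x})\sigma'_s(y,\widehat{y-x})\,dx\,dy\geq 0$. Since $|x_k-y_k|=2L$, the lower-bound computation leading to \eqref{i103} furnishes $c_0>0$ (independent of $C$) with $M_{kk}\geq c_0\,e^{LC}$ for $C$ large. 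For $j\ne k$, all four cross distances equal $L|e_j\pm e_k|\leq L(2-\delta)$, so using $E_+\leq e^{|y-x|(C+M_3)}$ with $|y-x|\leq L(2-\delta)+2\eta$ on each region gives a common $C_0>0$ with $I(a,b)\leq C_0\,e^{[L(2-\delta)+2\eta](C+M_3)}$, whence
\[
\frac{|M_{kj}|}{M_{kk}}\leq \frac{4C_0}{c_0}e^{[L(2-\delta)+2\eta]M_3}\cdot e^{-(L\delta-2\eta)C}\xrightarrow[C\to\infty]{}0
\]
since $L\delta-2\eta>0$, while $M_{kk}\to\infty$. Choose $C$ large enough (depending only on $X,d,M_1,M_2,M_3,N$ and on the fixed data $e_k,L,\eta$) so that $M_{kk}\geq 2$ and $\sum_{j\ne k}|M_{kj}|\leq 1$; Gershgorin's theorem yields $\lambda_{\min}(M)\geq 1$, hence $\lambda_N(RT_{\Gamma_+}^{-1}R)\geq 1$, and therefore $KT_{\Gamma_+}^{-1}$ has at least $N$ positive eigenvalues $\geq 1$.

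The main obstacle is the off-diagonal estimate: a naive placement of $N$ pairs lets the cross-integrals in $M_{kj}$ be exponentially as large as the diagonal $M_{kk}$, because the worst-case distance $|y-x|$ appearing in cross regions can match the pair diameter $|x_k-y_k|$, and the sign-alternating sum $+I+I-I-I$ offers no free cancellation. The geometric choice of pairs as $\pm Le_k$ around the common midpoint $x_*$, with $e_k$ pairwise non-antipodal, forces every cross distance $L|e_j\pm e_k|$ to be strictly below the pair diameter $2L$ by the uniform margin $L\delta$; this is precisely what lets the diagonal exponential dominate the off-diagonal one as $C\to\infty$.
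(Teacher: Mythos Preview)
Your proof is correct and follows essentially the same strategy as the paper: place $N$ antipodal pairs of balls around a common center so that the pair diameter $2L$ strictly exceeds every cross distance, and use this exponential gap to make the diagonal of the Gram-type matrix dominate the off-diagonal as $C\to\infty$. The only cosmetic difference is that the paper chooses the $e_k$ explicitly on a circle and finishes with a min--max contradiction argument, whereas you leave the $e_k$ abstract and conclude via Gershgorin's disc theorem; the latter is a slightly cleaner packaging of the same estimate.
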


\begin{proof}
Fix $x_0\in X$ and let $r\in (0,+\infty)$ so that $B(x_0,2r)\subseteq X$. Consider a plane $P$ passing through $x_0$ and spanned by two unit vectors $v_1$ and $v_2$ orthogonal to each other. Consider the sequence of $N$ distinct points $z_i$ on the sphere $S(x_0,r)$:
$$
z_l:=x_0+r\cos({l\pi\over N})v_1+r\sin({l\pi\over N})v_2.
$$ 
and set 
$$
z_{N+l}:=2x_0-z_l,\ l=1\ldots N.
$$
Then there exists $\alpha(N,r)\in (0,r)$ which depends only on $N$ and $r$ so that
$$
2r=|z_{N+l}-z_l|=\max_{m,n=1\ldots 2N}|z_m-z_n|\ge \alpha(N,r)+\max_{m,n=1\ldots N\atop m \not=n+N\textrm{ or }
n\not=m+N}|z_m-z_n|,
$$
and
$
\inf_{m,n=1\ldots 2N\atop m\not=n}|z_m-z_n|\ge\alpha(N,r).
$
Consider the family of $N$ functions 
$$
\phi_l=R\big(\chi_{B(z_{l+N}, {\alpha(N,r)\over 4})}-\chi_{B(z_l, {\alpha(N,r)\over 4})}\big),\ l=1\ldots N.
$$
Then for any $\sigma$ satisfying \eqref{i105} and for the function $\phi_l$ we have 
\begin{equation}
{\l RT_{\Gamma_+}^{-1}R\phi_l,\phi_l\r\over \|\phi_l\|^2}\ge \gamma_1(C)
\end{equation}
where
\begin{equation}
\gamma_1(C):=
{2^{-2d}c_d\alpha(N,r)^d e^{Cr}\over r^{d-1}}{M_1^2\over M_2}
\Big(1-{2^{d-1}r^{d-1}M_2^2\over c_d\alpha(N,r)^{d-1}M_1^2}e^{{\alpha(N,r)-4r\over 4}C+{M_3\alpha(N,r)\over 2}}\Big).
\label{i121b}
\end{equation}

Then write
\begin{equation}
\psi_l=R(\phi_l).\label{i120}
\end{equation}
Performing a change of variables ``$y=x+tv$", we have for $l\not=k$
\begin{eqnarray*}
|\l T_{\Gamma_+}^{-1}\psi_l,\psi_k\r| 
&=&\int_{X^2}{e^{|y-x|\int_0^1\sigma(x+\ep(y-x),v)d\ep}\over |y-x|^{d-1}}\psi_l(y,v) \psi_k(x,v)dx dy\\
&\le&\max\Big[\int_{W_{1,k,l}}
{e^{|y-x|\int_0^1\sigma(x+\ep(y-x),v)d\ep}\sigma'_s(x,v)\sigma'_s(y,v)dx dy\over |y-x|^{d-1}},\\
&&\int_{W_{2,k,l}}
{e^{|y-x|\int_0^1\sigma(x+\ep(y-x),v)d\ep}\sigma'_s(x,v)\sigma'_s(y,v)dxdy\over |y-x|^{d-1}}\Big]\\
&\le &c_d^2e^{(r-{\alpha(N,r)\over 4})\|\sigma\|_\infty}2^{-3d}\alpha(N,r)^{d+1}\|\sigma'_s\|_\infty^2,
\end{eqnarray*}
where we used the notation $v$ for the unit vector $\widehat{y-x}$ inside the above integrals and we introduced the sets 
$$
W_{1,k,l}=\big(B(z_l,{\alpha(N,r)\over 4})\times B(z_{k+N},{\alpha(N,r)\over 4})\big)\cup\big(B(z_{l+N},{\alpha(N,r)\over 4})\times B(z_k,{\alpha(N,r)\over 4})\big),
$$
$$
W_{2,k,l}=\big(B(z_l,{\alpha(N,r)\over 4})\times B(z_k,{\alpha(N,r)\over 4})\big)\cup\big(B(z_{l+N},{\alpha(N,r)\over 4})\times B(z_{k+N},{\alpha(N,r)\over 4})\big).
$$
In addition,
\begin{eqnarray*}
\|\phi_l\|_{L^2}^2&=&\l K(\chi_{B(z_{l+N}, {\alpha(N,r)\over 4})}-\chi_{B(z_l, {\alpha(N,r)\over 4})}),\chi_{B(z_{l+N}, {\alpha(N,r)\over 4})}-\chi_{B(z_l, {\alpha(N,r)\over 4})}\r \\
&= &\int_{X\times V}\sigma_s'(x,v)(\chi_{B(z_{l+N}, {\alpha(N,r)\over 4})}+\chi_{B(z_l, {\alpha(N,r)\over 4})}) \ 
\ge  2^{1-2d}c_d\alpha(N,r)^d\inf \sigma_s'. 
\end{eqnarray*}
Hence we arrive at the conclusion that
\begin{eqnarray}
&&\big|\l RT_{\Gamma_+}^{-1}R{\phi_l\over \|\phi_l\|_{L^2}},{\phi_k\over \|\phi_k\|_{L^2}}\r\big| 
\le c_d 2^{-d-1}
e^{(r-{\alpha(N,r)\over 4})\|\sigma\|_\infty}\alpha(N,r){\|\sigma'_s\|_\infty^2\over \inf \sigma_s'}
\nonumber\\
&\le& c_d 2^{-d-1}
e^{(r-{\alpha(N,r)\over 4})(C+M_3)}\alpha(N,r){M_2^2\over M_1}.
\label{i121}
\end{eqnarray}
Comparing \eqref{i121} and \eqref{i121b}, we obtain
\begin{equation*}
{\l RT_{\Gamma_+}^{-1}R\phi_m,\phi_m\r\over \|\phi_m\|^2}
\ge \gamma_2(C)\big|\l RT_{\Gamma_+}^{-1}R{\phi_l\over \|\phi_l\|_{L^2}},{\phi_k\over \|\phi_k\|_{L^2}}\r\big|,
\end{equation*}
for $m,l,k=1 \ldots N$ with $l\not=k$, where
$$
\gamma_2(C)={2^{-d+1}\alpha(N,r)^{d-1} e^{-M_3r+{\alpha(N,r)\over 4}(C+M_3)}M_1^3\over r^{d-1}M_2^3}
\Big(1-{2^{d-1}r^{d-1}M_2^2e^{{\alpha(N,r)-4r\over 4}C+{M_3\alpha(N,r)\over 2}}\over c_d\alpha(N,r)^{d-1}M_1^2}\Big).
$$
Note that $\gamma_2(C)\to+\infty$ as $C\to +\infty$.  Then consider the $N$-dimensional vector space $E$ spanned by the orthonormal vectors $\phi_l$, $l=1\ldots N$ (the balls $B(z_i,{\alpha(N,r)\over 4})$ are mutually disjoint). Assume that the compact selfadjoint operator $RT_{\Gamma_+}^{-1}R$ has (only) $N-j$ positive eigenvalues greater than one with $j\geq1$. Consider the subspace $F$ generated by the $N-j$ eigenvectors related to these eigenvalues. Then there exists $(\beta_i)_{i=1\ldots N}\in \C^N$ so that
$$
\sum_{j=1}^N|\beta_j|^2=1\textrm{ and }w:=\sum_{j=1}^N\beta_j {\phi_j\over \|\phi_j\|_{L^2}}\in F^\bot\ \textrm{(orthogonal space to $F$)} 
$$
and we obtain that 
$$
\l RT_{\Gamma_+}^{-1}R w,w\r \le \|w\|_{L^2}^2 \leq N.
$$
However
\begin{eqnarray*}
\l RT_{\Gamma_+}^{-1}R w,w\r&=&\sum_{j=1}^N|\beta_j|^2\l RT_{\Gamma_+}^{-1}R{\phi_j\over \|\phi_j\|_{L^2}},{\phi_j\over \|\phi_j\|_{L^2}}\r
+\sum_{i,j=1\ldots N\atop i\not=j}\bar \beta_j\beta_i\l RT_{\Gamma_+}^{-1}R{\phi_i\over \|\phi_i\|_{L^2}},{\phi_j\over \|\phi_j\|_{L^2}}\r\\
&\ge&\gamma_1(C)(1-{N(N-1)\over 2\gamma_2(C)})\to +\infty,
\end{eqnarray*}
as $C\to \infty$.
Therefore, for $C$ sufficiently large,  $RT_{\Gamma_+}^{-1}R$ must have (at least) $N$ positive eigenvalues greater than one. As a consequence, $KT_{\Gamma_+}^{-1}$ also has $N$ positive eigenvalues greater than one.
\end{proof}


Theorem \ref{thm_neg_eig} shows that the albedo operator has a non-trivial kernel for specific choices of the scattering coefficients $(\sigma,k)$. Theorem \ref{thm_neg_eig2} states that $(\sigma,\lambda k)$ gives rise to non-trivial kernels for $N$ different values of $\lambda$ (counting multiplicities). Since the index of the albedo operator vanishes, we obtain by duality a corresponding number of incoming conditions on $\Gamma_-$ leading to vanishing outgoing solutions on $\Gamma_+$.

The next result shows that when the kernel is non-trivial, it is generically one-dimensional.


\subsection{Simple eigenvalues are generic.}

\begin{theorem}
\label{thm_simple}
Let $\sigma\in C(\bar X\times V)$ satisfy \eqref{H1a}. Let $H$ be the {\em subspace} of $C(X, L^2(V^2))$ defined by 
$$
H:=\{k\in C(\bar X\times V\times V )\ |\ k\textrm{ satisfies }\eqref{H1c}\textrm{ and }K \textrm{ nonnegative operator in }L^2(X\times V)\}.
$$
Then there is a dense subset $D$  of $H$ in $C(\bar X\times V\times V )$ so that for any $k\in D$ the non zero eigenvalues of the operator $T^{-1}_{\Gamma_+}K$ are simple. In addition all the eigenvalues of the operator  $T^{-1}_{\Gamma_+}K$ restricted to $L^2_{\rm even}(X\times V)$ are generically simple.
\end{theorem}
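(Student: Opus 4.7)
The plan is to combine a Baire category argument with Kato--Rellich analytic perturbation theory applied to the analytic family $t\mapsto T^{-1}_{\Gamma_+}(K + tH)$. From the discussion preceding the theorem, $K = R^2$ with $R = K^{1/2}$ selfadjoint non-negative and having range in $L^2_{\rm even}(X\times V)$ (since $L^2_{\rm odd} \subseteq \ker K$), and the operator $S := R\,T^{-1}_{\Gamma_+}R$ is compact and selfadjoint on $L^2(X\times V)$. By the standard $AB$/$BA$ identification (with $A = R$ and $B = T^{-1}_{\Gamma_+}R$) the non-zero spectra of $T^{-1}_{\Gamma_+}K$ and $S$ agree with algebraic multiplicities; in particular every non-zero eigenvalue of $T^{-1}_{\Gamma_+}K$ is semisimple, so Kato--Rellich regular-perturbation theory applies without Puiseux branches.

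For each $n\in\N$ introduce
\begin{equation*}
D_n := \{k\in H : \text{every eigenvalue } \mu \text{ of } T^{-1}_{\Gamma_+}K \text{ with } |\mu|\geq 1/n \text{ is simple}\}.
\end{equation*}
The map $k\mapsto T^{-1}_{\Gamma_+}K \in \mathcal{L}(L^2(X\times V))$ is linear and continuous (the estimate $\|K-K'\|_{\mathcal{L}(L^2)} \le \|k - k'\|_{L^\infty(X;L^2(V^2))}$ shows continuity from the $C(\bar X\times V\times V)$-topology), and simplicity of an isolated eigenvalue of a compact operator is preserved under small operator-norm perturbations, so each $D_n$ is open in $H$. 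Since $H$ is a closed subset of the Banach space $C(\bar X\times V\times V)$ (both \eqref{H1c} and the non-negativity $\langle K\phi,\phi\rangle \ge 0$ are preserved under uniform convergence of $k$), $H$ is a Baire space and it suffices to prove density of each $D_n$; then $D := \bigcap_n D_n$ is the desired residual (hence dense) subset.

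To show density of $D_n$, fix $k_0\in H$, $\epsilon>0$, and let $\mu_0$ be a non-zero geometrically $m$-fold eigenvalue of $T^{-1}_{\Gamma_+}K_0$ with $m\ge 2$ and $|\mu_0|\ge 1/n$; set $E := \ker(T^{-1}_{\Gamma_+}K_0 - \mu_0)$ with associated spectral projector $P$. Choose $h\in H$ whose integral operator $H$ is non-negative (so $k_0 + th \in H$ for $t\ge 0$), and form the linear family $K_t = K_0 + tH$. Kato--Rellich yields $m$ holomorphic eigenvalue curves $\mu_j(t)$ through $\mu_0$ at $t=0$, with $\mu_j'(0)$ equal to the eigenvalues of
\begin{equation*}
M_h := P\, T^{-1}_{\Gamma_+} H\,\big|_E : E\to E.
\end{equation*}
If $M_h$ has $m$ distinct eigenvalues, then $\mu_0$ splits into $m$ simple eigenvalues for all sufficiently small $t>0$, while by continuity every eigenvalue of $T^{-1}_{\Gamma_+}K_t$ of modulus $\ge 1/n$ that was already simple at $t=0$ remains simple. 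Iterating on the finitely many multiple eigenvalues in $\{|\mu|\ge 1/n\}$ produces a $k'\in D_n$ within distance $\epsilon$ of $k_0$.

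The hard part is the last step: showing that the linear map $h\mapsto M_h$, restricted to the admissible cone of perturbations, meets the open dense set of operators on $E$ with distinct spectrum. Writing
\begin{equation*}
(M_h)_{ij} = \int_X\int_V\int_V h(x,v',v)\, e_j(x,v')\, \Psi_i(x,v)\, dv\, dv'\, dx
\end{equation*}
for a basis $\{e_j\}$ of $E$ and functions $\Psi_i = (T^{-1}_{\Gamma_+})^* e_i^*$ built from left eigenvectors $\{e_i^*\}$ biorthogonal to $\{e_j\}$, I would restrict to separable non-negative perturbations of the form $h(x,v',v) = \alpha(x)(\beta(v)\beta(v')+\beta(-v)\beta(-v'))$ with $\alpha\ge 0$ (these satisfy \eqref{H1c} and make $H\ge 0$). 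By localizing $\alpha$ near $m$ well-chosen points $x_1,\ldots,x_m\in X$ and adjusting $\beta$, the diagonal of $M_h$ can be forced to take $m$ distinct values while the off-diagonal entries are made arbitrarily small, producing a matrix with distinct spectrum. Ruling out accidental degeneracies in the probing data $\int\beta(v)e_j(x,v)\,dv$ reduces to a unique-continuation-along-characteristics argument for the transport equation $(v\cdot\nabla_x + \sigma)e_j = \mu_0^{-1}Ke_j$ with vanishing trace on $\Gamma_-$, which ensures that $x\mapsto e_j(x,\cdot)$ are linearly independent in $L^2(V)$ on a dense open subset of $X$; this is the genuinely delicate technical point. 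The second assertion, on generic simplicity of all eigenvalues of $T^{-1}_{\Gamma_+}K$ restricted to $L^2_{\rm even}(X\times V)$, follows from the identical scheme applied to the operator $(P_{\rm even}T^{-1}_{\Gamma_+}P_{\rm even})\,K_e$ acting on $L^2_{\rm even}$: this operator is similar to the compact selfadjoint $R\,T^{-1}_{\Gamma_+}R$, hence semisimple, and the same Baire--Kato argument delivers simplicity of all its (non-zero) eigenvalues for a dense family of $k\in H$.
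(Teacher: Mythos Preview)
Your overall strategy---a Baire category argument combined with Kato--Rellich analytic perturbation theory, following Albert---matches the paper's. The paper also reduces to the self-adjoint problem via $R T^{-1}_{\Gamma_+} R$, just as you do. But the two implementations diverge at the crucial splitting step, and your version has a real gap there.

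The paper does \emph{not} perturb linearly inside the non-negative cone. Instead it deforms $R$ itself as $R_\tau = R + \tau K_\gamma R$ for an arbitrary $\gamma$ satisfying \eqref{H1c}, so that $R_\tau R_\tau^* = K_{k_\tau}$ with $k_\tau = k + \tau\gamma_4 + \tau^2\gamma_5$ automatically lies in $H$ for every real $\tau$; this frees the choice of $\gamma$ from any positivity constraint. Working with the self-adjoint analytic family $R_\tau^* T^{-1}_{\Gamma_+} R_\tau$, the first-order eigenvalue corrections $\alpha_j$ are computed explicitly as
\[
\alpha_j \delta_{j,l} = \int_{X\times V^2} \gamma(x,v',v)\,\big[(P_{\rm even}T^{-1}_{\Gamma_+}R u_j)(x,v')\,R u_l(x,v) + R u_j(x,v')\,(P_{\rm even}T^{-1}_{\Gamma_+}R u_l)(x,v)\big]\,dx\,dv\,dv'.
\]
The paper then \emph{chooses $\gamma$ built directly from the eigenfunctions themselves}: either $\gamma = R\tilde u_1\otimes R\tilde u_2 + R\tilde u_2\otimes R\tilde u_1$ or $\gamma = R\tilde u_1\otimes R\tilde u_1$, and proves by a direct Cauchy--Schwarz computation (Lemma~\ref{lem_choice}) that at least one of these two choices makes the $\alpha_j$'s non-constant. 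A separate regularity lemma (Lemma~\ref{lem_smooth}) is needed to ensure $R u_j \in C(\bar X\times V)$ so that these $\gamma$'s are admissible.

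Your splitting argument, by contrast, relies on localizing $\alpha(x)$ near well-chosen points and a ``unique-continuation-along-characteristics'' claim that the functions $x\mapsto e_j(x,\cdot)$ are linearly independent in $L^2(V)$ on a dense open set. This claim is not proved, and in fact the paper itself establishes in Section~\ref{sec:convex} that unique continuation \emph{fails} for transport equations; the eigenfunctions here vanish on $\Gamma_+$ (not $\Gamma_-$ as you wrote), and there is no a~priori reason their velocity profiles should be pointwise independent in $x$. So this is a genuine gap: you have not produced a perturbation that provably splits the eigenvalue.

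There is a second gap in the treatment of the restriction to $L^2_{\rm even}$. The statement asserts generic simplicity of \emph{all} eigenvalues there, including $0$. The paper handles this by a separate argument: it perturbs $K$ by $\tau K_w$ with $w(v',v) = \sum_n \gamma_n^{-1} e_n(v)e_n(v')$ for a smooth even orthonormal basis $(e_n)$ of $L^2_{\rm even}(V)$, which makes $K_\tau$ injective on $L^2_{\rm even}(X\times V)$ for every $\tau>0$, so $0$ is generically not an eigenvalue at all. Your final paragraph only addresses non-zero eigenvalues and omits this step.
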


\begin{proof}
We first prove that $0$ is {\em generically not} an eigenvalue of the operator $T^{-1}_{\Gamma_+}K$ restricted to $L^2_{\rm even}(X\times V)$.
Assume that $\lambda=0$ is an eigenvalue for $T^{-1}_{\Gamma_+}K$ restricted to $L^2_{\rm even}(X\times V)$. This is equivalent to saying that $0$ is an eigenvalue for $KP_{\rm even}$ since  $T^{-1}_{\Gamma_+}$ is one-to-one. Therefore we just need to consider an appropriate deformation of the nonnegative operator $K$.

Let $(e_n)_{n\in \N}$ be an orthornormal basis of $L^2_{\rm even}(V)=\{\phi\in L^2(V)\ |\ \phi(v)=\phi(-v)\}$ so that the basis is made of smooth functions. Then we would like to consider the following deformation
$$
K_\tau :=K+\tau K_w.
$$
where 
$$
w(v',v)=\sum_{n\in \N}{e_n(v)e_n(v')\over \gamma_n}
$$
and the $\gamma_n$ are appropriately chosen positive constants so that the series is absolutely convergent and $w\in C^\infty(V\times V)$.
Here
$$
K_w\phi(x,v)=\int_V w(v', v)\phi(x,v')dv'.
$$
We would like to prove that for small positive $\tau$, then $K_\tau$ is one-to-one on $L^2_{\rm even}(X\times V)$.
We have 
$$
\l K_\tau \phi,\phi\r\ge \tau \sum_{n\in \N}\gamma_n^{-1}\int_{X}|\l e_n,\phi(x,.)\r_{L^2(V)}|^2,
$$
for any $\phi\in L^2_{\rm even}(X\times V)$. Therefore, for $\phi\in L^2_{\rm even}(X\times V)$ and $\tau>0$, $K_\tau \phi=0$ and we have 
$\sum_{n\in \N}\gamma_n^{-1}\int_{X}|\l e_n,\phi(x,.)\r_{L^2(V)}|^2dx=0$, which proves that $\phi=0$. Hence $0$ is not an eigenvalue of the operator $T^{-1}_{\Gamma_+}K_\tau$ restricted to $L^2_{\rm even}(X\times V)$.
This proves the second statement of the Theorem assuming the first one is proved.


For $\gamma\in C(\bar X\times V\times V)$ satisfying \eqref{H1c}, we denote by $K_{\gamma}$ the  bounded self-adjoint operator in $L^2(X\times V)$
$$
K_{\gamma}\phi(x,v)=\int_V \gamma(x,v',v)\phi(x,v')dv',\ (x,v)\in X\times V, \phi\in L^2(X\times V).
$$
For $(\gamma_1,\gamma_2)\in C(\bar X\times V\times V)^2$ both satisfying \eqref{H1c}, the following operator identity holds
$$
K_{\gamma_1}K_{\gamma_2}+K_{\gamma_2}K_{\gamma_1}=K_{\gamma_3},
$$
where $\gamma_3\in C(\bar X\times V\times V)$ also satisfies \eqref{H1c} and is defined by
$$
\gamma_3(x,v',v)=\int_V(\gamma_1(x,v'',v)\gamma_2(x,v',v'')+\gamma_2(x,v'',v)\gamma_1(x,v',v''))dv''.
$$

We also consider  a selfadjoint square root $R$ of the operator $K$, see \cite[Theorem VI.9]{RS}. We recall that
$$
R=\sqrt{M}\sum_{n=0}^\infty c_n\big(I-{K\over M}\big)^n,
$$
for $M$ large enough where the $c_i$'s are all negative numbers for $i>0$ and their series is convergent.  Here the operator $I-{K\over M}$ is a selfadjoint bounded and nonnegative operator and its uniform norm is $1$ by assumption on the function $k\in H$. 
We note that for $(\phi,\psi)\in L^2(X\times V)^2$
$$
\int_V (K\phi)(x,v)\psi(x,v)dv=\int_V\phi(x,v)(K\psi)(x,v)dv \textrm{ a.e. }x\in X.
$$
Hence we obtain that
\begin{eqnarray}
\int_V (R\phi)(x,v)\psi(x,v)dv=\int_V\phi(x,v)(R\psi)(x,v)dv \textrm{ a.e. }x\in X.\label{i302}
\end{eqnarray}

Now we prove the first statement of the Theorem. 
We consider the bounded operator 
$$
A=K_{\gamma}R\textrm{ for some }\gamma\in C(\bar X\times V\times V) \textrm{ satisfying }\eqref{H1c},\label{i300}
$$
and the analytic deformation of $R$ given by
$R_{\tau}=(R+\tau A)$.
Hence
$$
AR+RA^*=K_{\gamma}K+KK_{\gamma}=K_{\gamma_4},\ AA^*=K_{\gamma}KK_{\gamma}=K_{\gamma_5}
$$
where  $\gamma_4$ and $\gamma_5$ belong to $C(\bar X\times V\times V)$, also satisfy \eqref{H1c}, and are defined by
$$
\gamma_4(x,v',v)=\int_V(\gamma(x,v'',v)k(x,v',v")+k(x,v'',v)\gamma(x,v',v''))dv'',
$$
$$
\gamma_5(x,v',v)=\int_{V^2}\gamma(x,v_1,v)k(x,v_2,v_1)\gamma(x,v',v_2)dv_1dv_2,
$$
for $(x,v',v)\in \bar X\times V\times V$.
Therefore  $R_{\tau}R_{\tau}^*$ is the analytic deformation of $K=R^2$ given by 
$$
R_{\tau}R_{\tau}^*=K+\tau(AR+RA^*)+\tau^2(AA^*)=K_{k_\tau},\ k_{\tau}=k+\tau\gamma_4+\tau^2\gamma_5.
$$
By construction the function $k_\tau$ belongs to $H$ for any real $\tau$. 
The operator $T^{-1}_{\Gamma_+}K_{k_\tau}=T^{-1}_{\Gamma_+}R_\tau R_\tau^*$ also defines a $\tau$-analytic deformation of $T^{-1}_{\Gamma_+}K$.

We then apply Albert's approach \cite{A} on the analytic deformation $R_\tau^* T^{-1}_{\Gamma_+}R_\tau$.

Let $\lambda$ be a nonnegative (isolated) eigenvalue of $T^{-1}_{\Gamma_+}K=T^{-1}_{\Gamma_+}R^2$ with multiplicity $h>1$. Then $\lambda$ is also a nonnegative eigenvalue of $RT^{-1}_{\Gamma_+}R$ with multiplicity $h$.
By Rellich's Theorem \cite[Chapter 2, Section 2]{R}, there exist analytic deformations $\lambda_j(\tau)$ and $u_j(\tau)$, $j=1\ldots h$, so that $\lambda_j(0)=\lambda$ and $(u_j(\tau))_{j=1\ldots h}$ is an orthonormal family of eigenvectors of $R_\tau^* T^{-1}_{\Gamma_+}R_\tau$, $R_\tau^* T^{-1}_{\Gamma_+}R_\tau u_j(\tau)
=\lambda_j(\tau)u_j(\tau)$. 
We expand $R_{\tau}^*T^{-1}_{\Gamma_+}R_{\tau}$ and use \eqref{i300} to obtain
$$
R_{\tau}^*T^{-1}_{\Gamma_+}R_{\tau}=RT^{-1}_{\Gamma_+}R+\tau(RK_{\gamma}T^{-1}_{\Gamma_+}R+RT^{-1}_{\Gamma_+}K_{\gamma}R)+\tau^2(A^*T^{-1}_{\Gamma_+}A).
$$
We write
$\lambda_j(\tau)=\lambda+\tau \alpha_j+O(\tau^2)$ and $u_j(\tau)\ =\ u_j+\tau v_j+O(\tau^2)$.
We obtain
$$
\alpha_j  u_j+(\lambda-R T^{-1}_{\Gamma_+} R) v_j-(RK_{\gamma}T^{-1}_{\Gamma_+}R+RT^{-1}_{\Gamma_+}K_{\gamma}R)u_j=0.
$$
We note that 
$
(\lambda-R T^{-1}_{\Gamma_+} R) v_j\in {\rm ker}(\lambda-R T^{-1}_{\Gamma_+} R)^\bot.
$
Hence we obtain
\begin{equation}
\alpha_j \delta_{j,l}
=\l (K_{\gamma}T^{-1}_{\Gamma_+}+T^{-1}_{\Gamma_+}K_{\gamma})Ru_j,Ru_l\r=\l K_{\gamma}T^{-1}_{\Gamma_+}Ru_j,Ru_l\r+\l K_{\gamma}Ru_j,T^{-1}_{\Gamma_+}Ru_l\r.\label{i303}
\end{equation}
We use  that $R$ maps $L^2(X\times V)$ to $L^2_{\rm even}(X\times V)$  and we use selfadjointness of the operator $P_{\rm even}T^{-1}_{\Gamma_+}P_{\rm even}$ in the last identity.
We finally obtained that 
\begin{equation}
\alpha_j \delta_{j,l}
=\int_{X\times V^2}\gamma\big((P_{\rm even}T^{-1}_{\Gamma_+}Ru_j)(x,v')Ru_l(x,v)+Ru_j(x,v')(P_{\rm even}T^{-1}_{\Gamma_+}Ru_l)(x,v)\big) dx dvdv'.\label{i301}
\end{equation}
Above, $\gamma=\gamma(x,v',v)$.
Our goal is to prove that we can choose the function $\gamma$ in the definition of the operator $A$ so that there exists at least a couple of distinct values of $\alpha_j$'s, which proves that for small non zero $\tau$, $\lambda_1(\tau)$ has at most multiplicity $h-1$. This implies the analog of  \cite[Theorem 2]{A} in our setting. Then the conclusion of the proof of genericity of simple non zero eigenvalues of $T^{-1}_{\Gamma_+}K$ follows by reproducing the reasoning in \cite{A}.

We now proceed by contradiction and follow \cite[Lemma]{A}. Assume that the function $\gamma$ is such that all $\alpha_j$'s have the same value $\alpha$. Then it follows from \eqref{i303}  that for any couple $(\tilde u_1, \tilde u_2)$ of orthonormal real valued eigenvectors of ${\rm ker}(RT^{-1}_{\Gamma_+}R-\lambda)$
\begin{equation}
0
=\l (K_{\gamma}T^{-1}_{\Gamma_+}+T^{-1}_{\Gamma_+}K_{\gamma})R \tilde u_1,R \tilde u_2\r=\l K_{\gamma}T^{-1}_{\Gamma_+}R\tilde u_1,R\tilde u_2\r+\l K_{\gamma}R\tilde u_1,T^{-1}_{\Gamma_+}R\tilde u_2\r.\label{i304}
\end{equation}

\begin{lemma} 
\label{lem_smooth}
Under the assumptions of Theorem \ref{thm_simple}, 
$R u$ is continuous on $\bar X\times V$ for any $u\in {\rm ker}(RT^{-1}_{\Gamma_+}R-\lambda)$.
\end{lemma}
The proof of Lemma \ref{lem_smooth} is given in Appendix \ref{sec:five}.

\begin{lemma}
\label{lem_choice}
Let  $(\tilde u_1, \tilde u_2)$ be two orthonormal real valued eigenvectors of 
${\rm ker}(RT^{-1}_{\Gamma_+}R-\lambda)$.
One of the following statements is satisfied: 
\begin{itemize}

\item[i.] Identity \eqref{i304} does not hold when
$$
\gamma(x,v',v)=R\tilde u_1(x,v')R\tilde u_2(x,v)+R\tilde u_1(x,v)R\tilde u_2(x,v'),\  (x,v',v)\in X\times V\times V,
$$

\item[ii.]Identity \eqref{i304} does not hold when
$$
\gamma(x,v',v)=\  R\tilde u_1(x,v')R\tilde u_1(x,v),\ (x,v',v)\in X\times V\times V,
$$
and the couple $(\tilde u_1, \tilde u_2)$ is replaced by the couple $({\tilde u_1-\tilde u_2\over \sqrt{2}},{\tilde u_1+\tilde u_2\over \sqrt{2}})$ in \eqref{i304}. 
\end{itemize}
\end{lemma}
The proof of Lemma \ref{lem_choice} is given in Appendix \ref{sec:five}.
\end{proof}

\begin{remark}
The kernel of $T^{-1}_{\Gamma_+}K$ is infinite dimensional with the assumptions of Theorem \ref{thm_simple}.
For $k(x,v',v)=k(x,v,-v')$ and $u\in L^2_{\rm odd}(X\times V)$, we indeed have
$Ku=0,\ T^{-1}_{\Gamma_+}Ku=0.$
\end{remark}

\subsection{High-dimensional obstruction to controllability.}

We now show that in specific, highly symmetric, situations, the kernel of the albedo operator may have an arbitrarily large dimension. For the rest of this section, we consider the case $X=B(0,1)$ the unit Euclidean Ball of dimension $d$ centered at $0$, $\sigma$ is a constant and the function $k$ is identically equal to 1.

The operator of interest is $KT_{\Gamma_+}^{-1}$ on $L^2(X\times V)$. We restrict $KT_{\Gamma_+}^{-1}$ to $L^2_{\rm odd}(X)=\{g\in L^2(X)\ |\ g(-.)=-g\}$, and we denote by $F$ this restriction. 
Then 
$$
Ff(x)=-\int_X{e^{\sigma|x-y|}\over |x-y|^{d-1}}f(y),\ \textrm{ a. e. }x\in X,\ f\in L^2_{\rm odd}(X).
$$
The main properties of $F$ are invariance by complex conjugation and by orthogonal transformations of $\R^d$. 
The operator $F$ is a compact self-adjoint operator on $L^2_{\rm odd}(X)$. 

Consider the space $H_l(V)$ of spherical harmonics of degree $l$, $l\in \N$. The spherical harmonics of degree $l$ are the traces of harmonic homogeneous polynomials in $\R^d$ of degree $l$. We denote by $N_l$ the dimension of $H_l(V)$ and recall that $N_l=\big({d+l-1\atop d-1}\big)-\big({d+l-3\atop d-1}\big)$ for $l\ge 2$, $N_1=d$, $N_0=1$ (see \cite{BS}). We also denote by $(Y_{1,l},\ldots,Y_{N_l,l})$ an orthonormal basis for  $H_l(V)$ endowed with the $L^2$-product on $V$.

Let $l$ be odd and $L_{k,l}$ be the subspace of $L^2_{\rm odd}(X)$ consisting of square summable functions $f$ written in spherical coordinates as
$$
f(r\omega)=g(r)Y_{k,l}(\omega)\textrm{ a.e. }(r,\omega)\in (0,1)\times V,
$$
for some function $g\in L^2([0,1], r^{d-1}dr)$. 
The space $L_{k,l}$ can be identified to  $L^2([0,1], r^{d-1}dr)$.

\begin{lemma}
\label{lem_spec}
The operator $F$ leaves stable the subspace $L_{k,l}$. For any 
$g\in L^2([0,1], r^{d-1}dr),$
$$
F(gY_{k,l})=F_l(g)Y_{k,l},
$$
where $F_l$ is a selfadjoint compact (Hilbert-Schmidt) operator in $L^2([0,1], r^{d-1}dr)$.

\end{lemma}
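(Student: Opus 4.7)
The strategy is to exploit the rotational invariance of $F$: since its integral kernel $\kappa(x,y)=-e^{\sigma|x-y|}/|x-y|^{d-1}$ depends only on $|x-y|$, $F$ commutes with the natural unitary action of $O(d)$ on $L^2(X)$. Because the $O(d)$-isotypic decomposition of $L^2(X)$ reads $\bigoplus_{l,k}L_{k,l}$, invariance of each $L_{k,l}$ under $F$ is automatic. The substantive work is then to identify the radial operator $F_l$ explicitly and verify its Hilbert--Schmidt character.

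\medskip

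Concretely, I would pass to spherical coordinates $y=s\omega'$ and write
\begin{equation*}
F(gY_{k,l})(r\omega)=-\int_0^1 s^{d-1}g(s)\Big(\int_V\psi_{r,s}(\omega\cdot\omega')Y_{k,l}(\omega')d\omega'\Big)ds,
\end{equation*}
with $\psi_{r,s}(t):=e^{\sigma\sqrt{r^2+s^2-2rst}}/(r^2+s^2-2rst)^{(d-1)/2}$. For a.e.\ $(r,s)\in(0,1)^2$ the function $\psi_{r,s}$ is integrable on $[-1,1]$ against the Gegenbauer weight $(1-t^2)^{(d-3)/2}dt$, so the Funk--Hecke theorem produces a scalar $\mu_l(r,s)$ (independent of $k$) such that
\begin{equation*}
\int_V\psi_{r,s}(\omega\cdot\omega')Y_{k,l}(\omega')d\omega'=\mu_l(r,s)\,Y_{k,l}(\omega).
\end{equation*}
Setting $F_lg(r):=-\int_0^1\mu_l(r,s)g(s)s^{d-1}ds$ then gives $F(gY_{k,l})=(F_lg)Y_{k,l}$, proving both the stability of $L_{k,l}$ and the fact that $F_l$ does not depend on $k$. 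Self-adjointness of $F_l$ on $L^2([0,1],r^{d-1}dr)$ is immediate from the symmetry $\psi_{r,s}=\psi_{s,r}$, which forces $\mu_l(r,s)=\mu_l(s,r)$.

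\medskip

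For the Hilbert--Schmidt bound I would change variables $u=\sqrt{r^2+s^2-2rst}$ in the Funk--Hecke integral, so that $u$ runs over $[|r-s|,r+s]$ and
\begin{equation*}
\mu_l(r,s)=\frac{c_{d,l}}{(rs)^{d-2}}\int_{|r-s|}^{r+s}\frac{e^{\sigma u}}{u^{d-2}}C_l^{(d-2)/2}\Big(\frac{r^2+s^2-u^2}{2rs}\Big)\big[(u^2-(r-s)^2)((r+s)^2-u^2)\big]^{(d-3)/2}du,
\end{equation*}
with an analogous Chebyshev formula in $d=2$. The Gegenbauer factor is uniformly bounded on $[-1,1]$ and the exponential by $e^{2\sigma}$, so the singular behavior is carried entirely by the product $u^{-(d-2)}[u^2-(r-s)^2]^{(d-3)/2}$ near the lower endpoint. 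A direct local analysis at $u=|r-s|$ produces the bound $|\mu_l(r,s)|\le C_{l,\sigma,d}(1+|\log|r-s||)$, and this log-squared singularity is integrable against $r^{d-1}s^{d-1}dr\,ds$ on $(0,1)^2$. Hence $F_l$ admits a symmetric kernel in $L^2((0,1)^2,r^{d-1}s^{d-1}dr\,ds)$ and is Hilbert--Schmidt.

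\medskip

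The main obstacle I anticipate is precisely this logarithmic kernel estimate. The bare kernel $|x-y|^{-(d-1)}$ on $X\times X$ is not Hilbert--Schmidt for $d\ge 2$, so the gain must come entirely from the angular averaging against $Y_{k,l}$. The case $d=2$ needs a separate treatment because the Jacobian carries an extra $(\sin\phi)^{-1}$, and for $d\ge 4$ the vanishing of the Gegenbauer weight at the endpoints must be balanced carefully against the $u^{-(d-1)}$ blow-up of $\psi_{r,s}$ before the endpoint contribution can be controlled.
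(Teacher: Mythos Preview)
Your overall strategy matches the paper's: reduce via Funk--Hecke to a radial integral operator with kernel expressed through Gegenbauer polynomials, then bound the kernel to obtain the Hilbert--Schmidt property. The paper keeps the variable $t=\cos\theta$ rather than passing to $u=|x-y|$, but that is cosmetic.

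There is, however, a genuine gap in your kernel estimate. Your own formula carries an explicit $(rs)^{-(d-2)}$ prefactor for $d\ge 3$, which you silently drop when asserting $|\mu_l(r,s)|\le C_{l,\sigma,d}(1+|\log|r-s||)$. That bound is false near the origin: already for $d=3$, $l=1$ one checks directly that $\mu_1(\epsilon,2\epsilon)\sim c\,\epsilon^{-2}$ as $\epsilon\to 0$, since the remaining integral reduces to $\int_{-1}^{1} t\,(5-4t)^{-1}\,dt\neq 0$, whereas your bound would give $O(\log(1/\epsilon))$. In $d=2$ the prefactor is absent, but the $u$-integral itself produces a factor $(r+s)^{-1}$ that you also omit. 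The correct statement is the \emph{weighted} one
\[
\max(r,s)^{d-1}\,|\mu_l(r,s)|\ \le\ C\big(1+|\log|r-s||\big),
\]
which is exactly what the paper proves, via the elementary lower bound $(r^2\sin^2\theta+(s-r\cos\theta)^2)^{(d-2)/2}\ge (r\sin\theta)^{d-2}$ applied on the region $s<r$. This weight is precisely what the Hilbert--Schmidt check needs: since $(s/r)^{d-1}\le 1$ on $\{s<r\}$, the integrand $|\mu_l|^2(rs)^{d-1}$ is then dominated by $(1+|\log|r-s||)^2$, which is integrable on $(0,1)^2$. Your route through the $u$-variable can be salvaged, but only by retaining the $(rs)$-powers throughout and pairing them with the measure at the end rather than discarding them in the pointwise estimate.
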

The proof of the Lemma is given in Appendix \ref{sec:five}.
Note that the orthogonal sum
$$
\oplus_{l\in  \N,\ l\textrm{ odd },\ k=1\ldots N_l} L_{k,l}
$$
is dense in $L^2_{\rm odd}(X)$. Hence Lemma \ref{lem_spec} provides a spectral decomposition of the operator $F$ once each $F_l$'s are diagonalized. 
The eigenvalues of $F$ have a multiplicity greater than or equal to $d$ since $N_l\ge d$ for any $l\ge 1$. There does not exist a (uniform) bound on the multiplicity of eigenvalues for the operator $F$.

\begin{lemma}
\label{lem_specbd}
Let $l\in \N$, $l$ odd. When the constant $\sigma$ is large enough then the operator $F_l$ in $L^2([0,1], r^{d-1}dr)$ has a positive eigenvalue $\mu$ greater than 1 and $\mu$ is also an eigenvalue for the operator $F$ with multiplicity at least $N_l$.
\end{lemma}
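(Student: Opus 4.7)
The plan is to apply the Courant--Fischer min-max principle to the compact self-adjoint operator $F_l$ provided by Lemma \ref{lem_spec}. Through the isometric identification $g \mapsto gY_{k,l}$ between $L^2([0,1], r^{d-1}dr)$ and $L_{k,l}$, the top eigenvalue of $F_l$ coincides with $\sup_{\phi \in L_{k,l},\, \phi \neq 0} \l F\phi, \phi\r_{L^2(X)}/\|\phi\|_{L^2(X)}^2$, so it suffices to construct a test function $\phi \in L_{k,l}$ with $\l F\phi, \phi\r > \|\phi\|^2$ once $\sigma$ is taken large enough.

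I would choose $\phi(x) = g(|x|) Y_{k,l}(\hat{x})$ with $g = \chi_{[1-\eta, 1]}$ for some small fixed $\eta \in (0,1)$ and $Y_{k,l}$ a real spherical harmonic; since $l$ is odd, $\phi$ lies in $L^2_{\rm odd}(X) \cap L_{k,l}$. Writing $x = r\omega$, $y = s\omega'$ and applying the Funk--Hecke formula to the rotation-invariant kernel $\frac{e^{\sigma|r\omega-s\omega'|}}{|r\omega-s\omega'|^{d-1}}$ gives
\begin{equation*}
\l F\phi,\phi\r = -\int_{1-\eta}^1\!\!\int_{1-\eta}^1 \lambda_l(r,s)\, r^{d-1} s^{d-1}\, dr\, ds,
\end{equation*}
where, with $\alpha = (d-2)/2$,
\begin{equation*}
\lambda_l(r,s) = |\S^{d-2}| \int_{-1}^1 \frac{e^{\sigma\sqrt{r^2+s^2-2rst}}}{(r^2+s^2-2rst)^{(d-1)/2}}\, \frac{C_l^\alpha(t)}{C_l^\alpha(1)}\, (1-t^2)^{(d-3)/2}\, dt.
\end{equation*}

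The technical core is to show $\lambda_l(r,s) < 0$ with $|\lambda_l(r,s)| \gtrsim \sigma^{-(d-1)/2} e^{\sigma(r+s)}$ uniformly for $r,s \in [1-\eta,1]$ and $\sigma$ large. The $t$-integrand is maximized at $t=-1$, where $\sqrt{r^2+s^2-2rst}=r+s$. Since $l$ is odd, $C_l^\alpha$ is an odd polynomial with $C_l^\alpha(-1)=-C_l^\alpha(1)<0$, hence bounded away from $0$ with a fixed sign on some interval $[-1,-1+\delta_l]$. Performing the Laplace-type substitution $t=-1+v/\sigma$ and letting $\sigma\to\infty$ extracts the leading order
\begin{equation*}
\lambda_l(r,s) \sim c_{l,d}\, C_l^\alpha(-1)\, \sigma^{-(d-1)/2}\, e^{\sigma(r+s)},
\end{equation*}
while the contribution of $t\in[-1+\delta_l,1]$ is exponentially suppressed by a factor $e^{-c\sigma\delta_l}$ coming from the strict monotonicity of $t\mapsto \sqrt{r^2+s^2-2rst}$.

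Substituting back gives $\l F\phi,\phi\r \gtrsim \sigma^{-(d-1)/2}\left(\int_{1-\eta}^1 e^{\sigma r} r^{d-1}\,dr\right)^2 \gtrsim \sigma^{-(d+3)/2}\, e^{2\sigma}$, whereas $\|\phi\|_{L^2(X)}^2 = \int_{1-\eta}^1 r^{d-1}dr$ is a constant independent of $\sigma$. Thus the Rayleigh quotient for $F_l$ diverges as $\sigma\to\infty$, so for $\sigma$ sufficiently large the top eigenvalue $\mu$ of $F_l$ exceeds $1$. Finally, since $F$ acts as $F_l$ on each of the $N_l$ mutually orthogonal invariant subspaces $L_{k,l}$ for $k=1,\ldots,N_l$ by Lemma \ref{lem_spec}, any eigenvector $g$ of $F_l$ for $\mu$ produces $N_l$ linearly independent eigenvectors $gY_{k,l}$ of $F$, giving multiplicity at least $N_l$. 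The main obstacle is the asymptotic analysis of $\lambda_l$: one must verify both the correct sign coming from $C_l^\alpha$ near $-1$ and a matching lower bound of order $e^{\sigma(r+s)}/\sigma^{(d-1)/2}$, which requires carefully localizing the integrand on the scale $1+t\sim O(1/\sigma)$ so that the zeros of $C_l^\alpha$ away from $-1$ cannot spoil the estimate.
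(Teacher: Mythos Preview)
Your proposal is correct and follows essentially the same approach as the paper: both arguments use the Rayleigh quotient, exploit that for odd $l$ the Gegenbauer polynomial satisfies $G_l^{(d-2)/2}(-1)=-1$ so that it has a definite sign on an interval $[-1,-1+\delta_l]$, and then show that the contribution of this interval to the $t$-integral dominates exponentially as $\sigma\to\infty$, forcing the top eigenvalue of $F_l$ to diverge. The only differences are cosmetic: the paper takes a test function $g$ supported in a small interval $[r_0-\delta,r_0+\delta]\subset(\eta,1)$ rather than your $\chi_{[1-\eta,1]}$, and instead of your Laplace-type asymptotics $\lambda_l(r,s)\sim c\,\sigma^{-(d-1)/2}e^{\sigma(r+s)}$ it uses a cruder lower bound obtained by restricting to a subinterval $[-1+\eta/2,-1+\eta]$ (thereby avoiding the endpoint weight $(1-t^2)^{(d-3)/2}$) and then shows the remaining piece $t\in[-1+\eta,1]$ is exponentially subdominant, handling the integrable singularity at $t=1$, $s=r$ separately. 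Your Laplace approach is slightly sharper but requires the additional care you flag about the scale $1+t\sim 1/\sigma$; the paper's interval-splitting is coarser but sidesteps that localization entirely.
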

The proof of the Lemma is given in Appendix \ref{sec:five}.
We observe that the dimension $N_l$ grows to $+\infty$ as $l\to +\infty$ when the spatial dimension $d$ is greater or equal to 3. As a corollary of Lemma \ref{lem_specbd}, we deduce the existence of kernels of the albedo operator with arbitrary large dimension when $d\ge 3$.

\section*{Acknowledgments}

GB's research was partially supported by the National Science Foundation, Grants DMS-1908736 and EFMA-1641100 and by the Office of Naval Research, Grant N00014-17-1-2096.

\appendix

\section{Proof of Lemma \ref{lem_1p}}
\label{sec:app1p}

We start with  the proof of the statements for the operator $T_{\mC}^{-1}K$. Let $\phi$ be a bounded mesurable function on $\pa X\times V$ and $(x,v)\in X\times V$. It follows from the definition of $T_{\mC}^{-1}$  and $K$:
\begin{eqnarray*}
&&T_{\mC}^{-1}K L_\mC\phi(x,v)\\
&=&\chi_{\mC_-}(x-\tau_-(x,v)v,v)\int_{\S}\int_0^{\tau_-(x,v)}\!\!\!\!\!\!\!\!\! E_-(x,v,t)\int_V k(x-tv,v_-,v)L_\mC\phi(x-tv,v_-)dv_-dt\\
&&-\chi_{\mC_+}(x+\tau_+(x,v)v,v)\int_{\S}\int_0^{\tau_+(x,v)}\!\!\!\!\!\!\!\!\! E_+(x,v,t)\int_V k(x+tv,v_-,v)L_\mC \phi(x+tv,v_-)dv_- dt.
\end{eqnarray*}
Hence 
\begin{eqnarray*}
\big|T_{\mC}^{-1}K\phi(x,v)\big|
\le \|k\|_\infty\chi_{\mC_-}(x-\tau_-(x,v)v,v)\int_{\S}\int_0^{\tau_-(x,v)}\int_V |L_\mC \phi|(x-tv,v_-)dv_-dt\\
+\|k\|_\infty e^{\|\sigma\|_\infty  \tau_+(x,v)}\chi_{\mC_+}(x+\tau_+(x,v)v,v)\int_{\S}\int_0^{\tau_+(x,v)}\int_V |L_\mC \phi|(x+tv,v_-)dv_- dt.
\end{eqnarray*}
From the definition  of $L_\mC$, we have
\begin{eqnarray*}
|L_{\mC} \phi(y,v_-)|&\le &
\left\lbrace 
\begin{array}{l}
|\phi|(y-\tau_-(y,v_-)v_-,v_-),\ (y-\tau_-(y,v_-)v_-,v_-)\in \mC_-,\\
e^{\|\sigma\|_\infty \tau_+(y,v)}|\phi|(y+\tau_+(y,v_-)v_-,v_-),\ (y+\tau_+(y,v_-)v_-,v_-)\in \mC_+,
\end{array}
\right.
\end{eqnarray*}
for a.e. $(y,v_-)\in X\times V$. Now set
$$
I_\pm^-(\phi, x,v)=\int_{\S}\int_0^{\tau_\pm(x,v)}\int_V |\phi|(x\pm tv-\tau_-(x\pm tv,v_-)v_-,v_-)dv_-dt,
$$
$$
I_\pm^+(\phi, x,v)=\int_{\S}\int_0^{\tau_\pm(x,v)}\int_V |\phi|(x\pm tv+\tau_+(x\pm tv,v_-)v_-,v_-)dv_-dt.
$$

We obtain 
\begin{eqnarray*}
\big|T_{\mC}^{-1}K\phi(x,v)\big|&\le& \|k\|_\infty\big[\chi_{\mC_-}(x-\tau_-(x,v)v,v)(I_-^-+e^{\|\sigma\|_\infty  \|\tau_+\|_\infty}I_-^+)\\
&&+\chi_{\mC_+}(x+\tau_+(x,v)v,v)e^{\|\sigma\|_\infty  \|\tau_+\|_\infty}(I_+^-+e^{\|\sigma\|_\infty  \|\tau_+\|_\infty}I_+^+)\Big].
\end{eqnarray*}
We omitted the arguments $(\phi, x,v)$ in front of $I_-^\pm$ and $I_+^\pm$ for clarity. We first find a bound for $I_-^-$. The study of $I_\pm^+$ and $I_+^-$ is similar. We make the change of variables
$$
\gamma_{v_-,v,x}(t)=x-tv-\tau_-(x-tv,v_-)v_-,\ dt= \big|{ \nu_{P_{v_-,v,x}}(\gamma_{v_-,v,x}(t))\cdot v_- \over v_-\cdot v^\bot}\big|d\gamma_{v_-,v,x}(t),
$$ 
where $P_{v_-,v,x}$ denotes the 2-dimensional plane passing through $x$ with directions $(v,v_-)$ for a.e. $v_-$ and where $\gamma_{v_-,v,x}$ is the intersection curve between $\pa X$ and the plane $P_{v_-,v,x}$ and where $ \nu_{P_{v_-,v,x}}(z)$ is the outward unit normal at the boundary point $z$ of the 2-dimensional domain $P_{v_-,v,x}\cap X$. Here $v^\bot$ denotes one unit vector orthogonal to $v$ in the vector plane spanned by $v$ and $v_-$. We set 
$$
\pa X_{-,v_-,v,x}=\{ x- tv-\tau_-(x - tv,v_-)v_-\ |\ t\in (0,\tau_-(x,v))\}\subset \pa X\cap P_{v_-,v,x}.
$$

We obtain 
\begin{equation}
I_-^-(\phi, x,v)
=\int_{\S^{d-1}}\int_{\gamma_{v_-,v,x}}\!\!\!\!\!\!\!\!\!\!\!\!\!{\chi_{\pa X_{-,v_-,v,x}}(\gamma_{v_-,v,x})|\phi|(\gamma_{v_-,v,x},v_-) |\nu_{P_{v_-,v,x}}(\gamma_{v_-,v,x})\cdot v_- |d\gamma_{v_-,v,x} dv_-\over |v_-\cdot v^\bot|}.
\label{A0}
\end{equation}

\paragraph{Dimension $d=2$.}
In that case, $\pa X$ is 1-dimensional and $X\cap P_{v_-,v,x}=X$ for a.e. $(v,v_-)$. Hence \eqref{A0} reduces to 
$$
I_-^-(\phi, x,v)
=\int_{\mC_-}{\chi_{\pa X_{-,v_-,v,x}}(x_-)\over |v_-\cdot v^\bot|}|\phi(x_-,v_-)|d\xi(x_-,v_-).
$$

Let $p\in (1,2-r)$ for some small positive number $r$ and denote its conjugate by $q$, $p^{-1}+q^{-1}=1$. We are going to prove that 
\begin{equation}
\|I_-^-(\phi, .,.)\|_{L^p(X\times V)}\le C_p\|\phi\|_{L^1(\mC, d\xi)},\ \|I_-^-(\phi, .,.)\|_{L^p(\Gamma\b\mC,d\xi) }\le C_p\|\phi\|_{L^1(\mC, d\xi)}.\label{A3}
\end{equation}
Similar estimates are derived for $I_\pm^+$ and $I_-^+$ which prove our statements for the dimension $d=2$.
We use the estimates 
\begin{eqnarray}
\sup_{x_-\in \pa X}\int_{\pa X}\chi_{\pa X_{-,v_-,v,x}}(x_-)|\nu(x)\cdot v|d\mu(x) &\le& C_X|v\cdot v_-^\bot|,\nonumber\\
\sup_{x_-\in \pa X}\int_X\chi_{\pa X_{-,v_-,v,x}}(x_-)dx &\le& C_X|v\cdot v_-^\bot|\label{A2}
\end{eqnarray}
for a constant $C_X$ and for a.e. $(v,v_-)\in V^2$, and
\begin{equation}
\sup_{v_-\in V}\int_V|v_-\cdot v^\bot|^{1-p}dv=\int_0^{2\pi}|\sin(\theta)|^{1-p}d\theta<\infty.\label{A4}
\end{equation}
A proof of \eqref{A2} is postponed to the end of this section.

Let us start with the first estimate in \eqref{A3}. We first prove that
\begin{equation}
\int_{X\times V}\int_{\mC_-}{\chi_{\pa X_{-,v_-,v,x}}(x_-)\over |v_-\cdot v^\bot|^p}|\phi(x_-,v_-)|d\xi(x_-,v_-) dx dv\le \tilde C_p\|\phi\|_{L^1(\mC, d\xi)}^p.\label{A5}
\end{equation}
Indeed  we use  Tonelli's theorem on nonnegative measurable functions and the second estimate in \eqref{A2} and \eqref{A4} to obtain 
\begin{eqnarray*}
&&\int_{X\times V}\int_{\mC_-}{\chi_{\pa X_{-,v_-,v,x}}(x_-)\over |v_-\cdot v^\bot|^p}|\phi(x_-,v_-)|d\xi(x_-,v_-) dx dv\\
&\le &C_X\int_{\mC_-} \int_V{dv\over |v_-\cdot v^\bot|^{p-1}}|\phi(x_-,v_-)|d\xi(x_-,v_-)\le C_X \int_0^{2\pi}|\sin(\theta)|^{1-p}d\theta \|\phi\|_{L^1(\mC,d\xi)}.
\end{eqnarray*}
Hence 
$$
\tilde C_p=C_X \int_0^{2\pi}|\sin(\theta)|^{1-p}d\theta<\infty
$$ 
and the integrand in \eqref{A5}  belongs to $L^1$ and we can apply H\"older estimate below
\begin{eqnarray*}
&&\|I_-^-(\phi, .,.)\|_{L^p(X\times V)}^p=\int_{X\times V}\Big|\int_{\mC_-}{\chi_{\pa X_{-,v_-,v,x}}(x_-)\over |v_-\cdot v^\bot|}|\phi(x_-,v_-)|d\xi(x_-,v_-)\Big|^p dx dv\\
&\le &\int_{X\times V}\Big[\int_{\mC_-}{\chi_{\pa X_{-,v_-,v,x}}(x_-)\over |v_-\cdot v^\bot|^p}|\phi(x_-,v_-)|d\xi(x_-,v_-) 
\\
&&\times\big(\int_{\mC_-}\chi_{\pa X_{-,v_-,v,x}}(x_-)|\phi(x_-,v_-)|d\xi(x_-,v_-)\big)^{p\over q}\Big]dxdv\\
&\le &\int_{X\times V}\int_{\mC_-}{\chi_{\pa X_{-,v_-,v,x}}(x_-)\over |v_-\cdot v^\bot|^p}|\phi(x_-,v_-)|d\xi(x_-,v_-)dx dv
 \|\phi\|_{L^1(\mC,d\xi)}^{p\over q}
\le\tilde C_p \|\phi\|_{L^1(\mC,d\xi)}^p,
\end{eqnarray*}
which proves the first estimate. The second estimate in \eqref{A3} is proved in the same way (use the first estimate in \eqref{A2} instead) and we start from the estimate
$$
\|I_-^-(\phi, .,.)\|_{L^p(\Gamma\b\mC,d\xi)}^p\le\int_{\pa X\times V}\Big|\int_{\mC_-}{\chi_{\pa X_{-,v_-,v,x}}(x_-)\over |v_-\cdot v^\bot|}|\phi(x_-,v_-)|d\xi(x_-,v_-)\Big|^p d\xi(x,v).
$$

\paragraph{Dimension $d\ge 3$.}
Let $1\le p< d$. We first prove that $I_-^-(\phi,.,.)\in L^p(X\times V)$ and 
\begin{equation}
\|I_-^-(\phi,.,.)\|_{L^p(X\times V)}\le C_p\|\phi\|_{L_*(\Gamma)},\label{A6}
\end{equation}
for some constant $C_p$ depending only on $p$ and $X$. This reduces to proving \eqref{A6} when $\phi$ is replaced by $|\phi|$.
By \eqref{A0} and by H\"older's inequality
\begin{eqnarray*}
&&\|I_-^-(|\phi|,.,.)\|_{L^p(X\times V)}^p\\
&=&\int_{X\times V}\Big|\int_V\int_{\gamma_{v_-,v,x}}{\chi_{\pa X_{-,v_-,v,x}}(\gamma)| \nu_{P_{v_-,v,x}}(\gamma)\cdot v_- ||\phi|(\gamma,v_-)d\gamma dv_-\over |v_-\cdot v^\bot|}\Big|^p dx dv\\
&\le& \int_{X\times V} \Big[\int_V\int_{\gamma_{v_-,v,x}}{\chi_{\pa X_{-,v_-,v,x}}(\gamma)| \nu_{P_{v_-,v,x}}(\gamma)\cdot v_- |\over |v_-\cdot v^\bot|^p}|\phi|(\gamma,v_-)d\gamma dv_- \\
&&\times  \big(\int_V\int_{\gamma_{v_-,v,x}}| \nu_{P_{v_-,v,x}}(\gamma)\cdot v_- ||\phi|(\gamma,v_-)d\gamma dv_-\big)^{p\over q}\Big]dx dv.
\end{eqnarray*}
We use the notation $\gamma$ for $\gamma_{v_-,v,x}$ in the integrands above and below.
By definition of the norm $\|\phi\|_{L_*(\Gamma)}$ we have
$$
\int_V\int_{\gamma_{v_-,v,x}}| \nu_{P_{v_-,v,x}}(\gamma)\cdot v_- ||\phi|(\gamma,v_-)d\gamma dv_-\le \|\phi\|_{L_*(\Gamma)},
$$
and we have 
\begin{eqnarray*}
&&\|I_-^-(|\phi|,.,.)\|_{L^p(X\times V)}^p\\
&\le& \|\phi\|_{L_*(\Gamma)}^{p\over q}\int_{X\times V} \Big[\int_V\int_{\gamma_{v_-,v,x}}{\chi_{\pa X_{-,v_-,v,x}}(\gamma)| \nu_{P_{v_-,v,x}}(\gamma)\cdot v_- |\over |v_-\cdot v^\bot|^p}|\phi|(\gamma,v_-)d\gamma dv_-\Big]dx dv.
\end{eqnarray*}

We introduce the spherical coordinates for the $v$-variable: 
$$
v(\theta,\omega)=\sin(\theta)v_-+\cos(\theta)\omega,\ (\theta,\omega)\in (-{\pi\over 2},{\pi\over 2})\times\S^{d-2}_{v_-}
$$ 
where 
$
 \S^{n-2}_{v_-}=\{\omega\in \S^{d-1}\ |\ v_-\cdot \omega=0\}.
$
We recall that $\gamma_{v_-,v,x}$ denotes the curves that intersect $\pa X$  and the 2-dimensional plane $P_{v_-,v,x}$ passing through $x$ with directions $(v,v_-)$ (or equivalently $(\omega,v)$ for a.e. $v$). Hence 
$$
\gamma_{v_-,v,x}= \gamma_{v_-,\omega,x},\ P_{v_-,v,x}=P_{v_-,\omega,x}\textrm{ for a.e. }(v,v_-)\in (\S^{d-1})^2.
$$
In addition
$$
|v_-\cdot v^\bot|= \cos(\theta)\textrm{ in the vector plane spanned by }\omega\textrm{ and }v_-,
$$
and we obtain 
\begin{eqnarray*}
\|I_-^-(|\phi|,.,.)\|_{L^p(X\times V)}^p
&\le&\|\phi\|_{L_*(\Gamma)}^{p\over q} \int_V\int_{X\times \S^{d-2}_{v_-}}\int_{-{\pi\over 2}}^{\pi\over 2}(\cos\theta)^{d-2-p}\int_{\gamma_{v_-,\omega,x}}\chi_{\pa X_{-,x,v(\theta,\omega),v_-}}(\gamma) \\
&&\times|\nu_{P_{v_-,\omega,x}}(\gamma)\cdot v_- ||\phi|(\gamma,v_-)d\gamma d\theta d\omega dx dv_-.
\end{eqnarray*}

We denote $H_{v_-,\omega}$ the $(d-2)$-dimensional space orthogonal to $v_-$ and $\omega$. We introduce the change of variables 
$$
x=y+tv_-+s\omega,\ y\in H_{v_-,\omega},\ |y|\le r_1,\ (s,t)\in (-r_1,r_1)^2,\ dx=dt ds dy.
$$
Hence $P_{v_-,\omega,x}=P_{v_-,\omega,y}$ and $\gamma_{v_-,\omega,x}=\gamma_{v_-,\omega,y}$ and we obtain
\begin{eqnarray*}
&&\|I_-^-(|\phi|,.,.)\|_{L^p(X\times V)}^p\\
&\le& \|\phi\|_{L_*(\Gamma)}^{p\over q}\int_V\int_{ \S^{d-2}_{v_-}}\int_{y \in H_{v_-,\omega}\atop |y|\le r_1}\int_{-{\pi\over 2}}^{\pi\over 2}(\cos\theta)^{d-2-p}\int_{(-r_1,r_1)^2}\chi_X(y+tv_-+s\omega)\\
&&\times
\int_{\gamma_{v_-,\omega,y}}\!\!\!\!\!\!\!\!\!\!\chi_{\pa X_{-,y+tv_-+s\omega,v(\theta,\omega),v_-}}(\gamma) 
|\nu_{P_{v_-,\omega,y}}(\gamma)\cdot v_- ||\phi|(\gamma,v_-)d\gamma  dt ds d\theta d\omega  dy dv_-.
\end{eqnarray*}
We use the following estimate (see above for the $2$-dimensional case)
\begin{eqnarray}
&&\sup_{y\in H_{v_-,\omega}}\int_{(-r_1,r_1)^2}\chi_X(y+tv_-+s\omega)\chi_{\pa X_{-,y+tv_-+s\omega,v(\theta,\omega),v_-}}(\gamma_{v_-,\omega,y})ds dt
\le C_X|v_-\cdot v^\bot|\nonumber\\
&=&C_X\cos(\theta)\label{A200}
\end{eqnarray}
for a constant $C_X$ which depends only on $X$ and for a.e. $v_-,v$, and we note that 
$$
C_p=\int_{-{\pi\over 2}}^{\pi\over 2}(\cos\theta)^{d-1-p}d\theta<\infty \textrm{ for }p<d,
$$
and we obtain that
\begin{eqnarray*}
&&\|I_-^-(|\phi|,.,.)\|_{L^p(X\times V)}^p\\
&&\hskip-0.5cm \le \|\phi\|_{L_*(\Gamma)}^{p\over q} C_X \int_V\int_{ \S^{d-2}_{v_-}}\int_{y \in H_{v_-,\omega}\atop |y|\le r_1}\int_{-{\pi\over 2}}^{\pi\over 2}(\cos\theta)^{d-1-p}\\
&&\times\int_{\gamma_{v_-,\omega,y}} |\nu_{P_{v_-,\omega,y}}(\gamma)\cdot v_- |
|\phi|(\gamma,v_-)d\gamma  d\theta d\omega  dy dv_-\\
&&\hskip-0.5cm \le \|\phi\|_{L_*(\Gamma)}^{p\over q} C_XC_p \int_V\int_{ \S^{d-2}_{v_-}}\int_{y \in H_{v_-,\omega}\atop |y|\le r_1}\int_{\gamma_{v_-,\omega,y}} |\nu_{P_{v_-,\omega,y}}(\gamma)\cdot v_- |
|\phi|(\gamma,v_-)d\gamma  d\omega  dy dv_-\\
&&\hskip-0.5cm\le C_XC_p |V||\S^{d-2}| |B_{d-2}(0,r_1)| \|\phi\|_{L_*(\Gamma)}^{{p\over q}+1}= C_XC_p |V||\S^{d-2}| |B_{d-2}(0,r_1)| \|\phi\|_{L_*(\Gamma)}^p.
\end{eqnarray*}
Here $|V|$, $|\S^{d-2}|$ $|B_{d-2}(0,r_1)|$ denote the $(d-1)$-dimensional volume of $V$, the $(d-2)$-dimensional volume of $\S^{d-2}$ and the $(d-2)$-dimensional volume of the Euclidean Ball of center $0$ and radius $r_1$ in $\R^{d-2}$ respectively. This concludes the proof on boundedness of $T_{\mC}^{-1}KL_\mC$ from $L_*(\Gamma)$ to $L^p(X\times V)$.

Similarly we prove that $I_-^-(\phi,.,.)\in L^p(\Gamma, d\xi)$ and 
\begin{equation}
\|I_-^-(\phi,.,.)\|_{L^p(\Gamma,d\xi)}\le C_p\|\phi\|_{L_*(\Gamma)},\label{A6b}
\end{equation}
Indeed we repeat the first lines of the proof above to obtain
\begin{eqnarray*}
&&\hskip-0.8cm\|I_-^-(|\phi|,.,.)\|_{L^p(\Gamma,d\xi)}^p
\le \|\phi\|_{L_*(\Gamma)}^{p\over q}\int_V\int_{\pa X\times \S^{d-2}_{v_-}}\int_{-{\pi\over 2}}^{\pi\over 2}(\cos\theta)^{d-2-p}\int_{\gamma_{v_-,\omega,x}}\!\!\!\!\!\chi_{\pa X_{-,x,v(\theta,\omega),v_-}}(\gamma_{v_-,\omega,x}) \\
&&\times|\nu_{P_{v_-,\omega,x}}(\gamma_{v_-,\omega,x})\cdot v_- ||\phi|(\gamma_{v_-,\omega,x},v_-)d\gamma_{v_-,\omega,x} |\nu(x)\cdot v(\theta,\omega)| d\theta d\omega d\mu(x) dv_-.
\end{eqnarray*}
Then recalling that the variable $x$ lives on the boundary $\pa B(0,r_1)\cup \pa B(0,r_2)$, we introduce the change of variables 
$$
x(r,y,\theta')=y+\sqrt{r^2-|y|^2}(\cos(\theta')\omega+\sin(\theta')v_-),\ y\in H_{v_-,\omega},\ 0\le |y|< r_1,\ \theta'\in (0,2\pi),
$$
$$
dx=r d\theta' dy.
$$
and  $r=r_1\textrm{ when }r_2<|y|<r_1$ while $\textrm{either  }r=r_1 \textrm{ or }r=r_2 \textrm{ when }0\le |y|<r_2$.
We omit the dependance of $x(r,y,\theta)$ with respect to $v_-$ and $\omega$. Note that $P_{v_-,\omega,x}=P_{v_-,\omega,y}$ and $\gamma_{v_-,\omega,x}=\gamma_{v_-,\omega,y}$. We obtain 
\begin{eqnarray*}
&&\|I_-^-(|\phi|,.,.)\|_{L^p(\Gamma,d\xi)}^p
\le\|\phi\|_{L_*(\Gamma)}^{p\over q} \int_V\sum_{i=1}^2 r_i\int_{\S^{d-2}_{v_-}}\int_{y\in H_{v_-,\omega}\atop|y|<r_i}\int_0^{2\pi}\int_{-{\pi\over 2}}^{\pi\over 2}(\cos\theta)^{d-2-p}\\&&\times \int_{\gamma_{v_-,\omega,y}}\chi_{\pa X_{-,x,v(\theta,\omega),v_-}}(\gamma_{v_-,\omega,y})|\nu(x)\cdot v(\theta,\omega)| \\
&&\times|\nu_{P_{v_-,\omega,y}}(\gamma_{v_-,\omega,y})\cdot v_- ||\phi|(\gamma_{v_-,\omega,y},v_-)d\gamma_{v_-,\omega,y}\Big|_{x=x(r_i,y,\theta')} d\theta d\omega d\theta' dy dv_-.
\end{eqnarray*}
We also use the following estimates (see the end of this section for their proof)
\begin{eqnarray}
&&\sup_{y\in H_{v_-,\omega},\ 0\le |y|<r_i}\int_{(0,2\pi)}\chi_{\pa X_{-,x,v(\theta,\omega),v_-}}(\gamma_{v_-,\omega,y})|\nu(x)\cdot v(\theta,\omega)|\Big|_{x=x(r_i,y,\theta')}d\theta'\nonumber\\
&&\le C_X|v_-\cdot v^\bot|=C_X\cos(\theta),\ i=1,2.\label{A201}
\end{eqnarray}
Hence we obtain as above 
\begin{eqnarray*}
\|I_-^-(|\phi|,.,.)\|_{L^p(\Gamma,d\xi)}^p
\le \|\phi\|_{L_*(\Gamma)}^{p\over q}C_X\int_V\Big(\sum_{i=1}^2 r_i\int_{\S^{d-2}_{v_-}}\int_{y\in H_{v_-,\omega}\atop|y|<r_i}\int_{-{\pi\over 2}}^{\pi\over 2}(\cos\theta)^{d-1-p} d\theta\\
\times\int_{\gamma_{v_-,\omega,y}}|\nu_{P_{v_-,\omega,y}}(\gamma_{v_-,\omega,y})\cdot v_- ||\phi|(\gamma_{v_-,\omega,y},v_-)d\gamma_{v_-,\omega,y}  d\omega dy dv_-\\
\le 2r_1C_X C_p|V||\S^{d-2}||B_{d-2}(0,r_1)| \|\phi\|_{L_*(\Gamma)}^p,
\end{eqnarray*}
which conclude the proof on boundedness of $T_{\mC}^{-1}KL_\mC$ from $L_*(\Gamma)$ to $L^p(\Gamma\b\mC,d\xi)$.

\paragraph{Boundedness of $\Big[(I-T_{\mC}^{-1}K)^{-1}(T_{\mC}^{-1}K)L_{\mC}\Big]_{|\Gamma\b\mC}$ under condition \eqref{i20a}.}
Under condition \eqref{i20a} we write 
$$
(I-T_{\mC}^{-1}K)^{-1}(T_{\mC}^{-1}K)L_{\mC}\phi=(T_{\mC}^{-1}K)L_{\mC}\phi+(I-T_{\mC}^{-1}K)^{-1}(T_{\mC}^{-1}K)^2L_{\mC}\phi,
$$
for any $\phi\in L^\infty(\mC,d\xi)$. 
Let us consider dimension $d=2$. The higher dimensional case $d\ge 3$ is handled similarly: replace $L^1(\mC,d\xi)$ by $L_*(\Gamma(X))$.

We have already proved that $T_{\mC}^{-1}KL_{\mC}$ defined a bounded operator from $L^1(\mC,d\xi)$ to $L^p(\Gamma\b\mC,d\xi)$.
It remains to prove that the second term also defines a bounded operator from $L^1(\mC,d\xi)$ to $L^p(\Gamma\b\mC,d\xi)$.

We have proved above that $T_{\mC}^{-1}KL_{\mC}$ defined a bounded operator from $L^1(\mC,d\xi)$ to $L^p(X\times V)$. Then by Lemmas \ref{lem_bound1} and \ref{lem_K},
$(T_{\mC}^{-1}K)^2L_{\mC}$ defines a bounded operator from $L^1(\mC,d\xi)$ to $W^p(X\times V)$. Hence we obtain that
$(I-T_\mC^{-1}K)(T_{\mC}^{-1}K)^2L_{\mC}$ is bounded from $L^1(\mC,d\xi)$ to $W^p(X\times V)$ and Lemma \ref{lem:lift} concludes the proof.

\paragraph{Boundedness of $\big[L_{\mC}]_{|\Gamma}$ on $L_*(\Gamma)$ in dimension $d\ge 3$.} Let us recall first a formula for a 2-dimensional $C^1$ bounded domain $X$, a velocity space $V$ and a boundary set $\mC$: For $f\in L^\infty(\mC)$
\begin{equation}
\int_{\pa X\b\mC_v} [J_\mC f]_{|\Gamma\b\mC}(x,v)|\nu(x)\cdot v|d\mu(x)= \int_{\mC_v} f(x,v)|\nu(x)\cdot v|d\mu(x)\label{A20}
\end{equation}
for a.e. $v\in \S^1$ where $\mC_v=\{x\in \pa X\ |\ (x,v)\in \mC\}$. The proof of \eqref{A20} follows from the identity
\begin{eqnarray*}
\int_V g(v)\big(\int_{\pa X\b\mC_v} [J_\mC f]_{|\Gamma\b\mC}(x,v)|\nu(x)\cdot v|d\mu(x)\big)dv=\int_{\Gamma\b\mC}[J_\mC f]_{|\Gamma\b\mC}(gf)d\xi=\int_{\mC}gfd\xi\\
=\int_Vg(v)\big(\int_{\mC_v}  f(x,v)|\nu(x)\cdot v|d\mu(x)\big)dv,
\end{eqnarray*}
for any $g\in L^1(V)$. 
Now let us return to dimension $d\ge 3$.
Let $\phi \in L^\infty(\pa X\times V)$ and let $(v_-,\omega,y)\in {\mathcal M}$ so that $\gamma_{v_-,\omega,y}$ is either a circle or the union of 2 circles and $X\cap P_{v_-,\omega,y} $ is either a disk or a planar annulus depending on the value of $y$ (either $0\le |y|< r_2$ or $r_2<|y|<r_1$). We denote by $\S_{\omega,v_-}$ the unit sphere in the vector plane spanned by $(\omega,v_-)$ and  $\Gamma_{v_-,\omega,y}=\gamma_{v_-,\omega,y}\times  \S_{\omega,v_-}$. The subsets $C_{\pm,v_-,\omega,y}=\Gamma_{v_-,\omega,y}\cap C_\pm $ are measurable subsets of $\Gamma_{v_-,\omega,y}$ and their union  $\mC_{v_-,\omega,y}=\Gamma_{v_-,\omega,y}\cap C$ has a non zero measure for a.e. $(v_-,\omega,y)\in {\mathcal M}$. We also denote by $\phi_{|\Gamma_{v_-,\omega,y}}$ the measurable bounded function obtained by restricting $\phi$ on $\Gamma_{v_-,\omega,y}$  for a.e. $(v_-,\omega,y)\in {\mathcal M}$.
In addition $L_{\mC_{v_-,\omega,y}}(\phi_{|\Gamma_{v_-,\omega,y}})=L_{\mC}\phi$ for a.e. $(v_-,\omega,y)\in {\mathcal M}$  where $B_{\mC_{v_-,\omega,y}}$ denotes any operator ``$B_{\mC}$" of Section \ref{sec:forward} for ``$(X,V,\mC)=(X\cap P_{v_-,\omega,y},\S_{\omega,v_-}, \mC_{v_-,\omega,y})$". In addition
\begin{equation}
\big[L_{\mC_{v_-,\omega,y}}\big]_{|\Gamma_{v_-,\omega,y}}(\phi_{|\Gamma_{v_-,\omega,y}})=\Big[\big[L_{\mC}\big]_{|\Gamma}(\phi)\Big]_{|\Gamma_{v_-,\omega,y}}\label{A22}
\end{equation}
for a.e. $(v_-,\omega,y)\in {\mathcal M}$.

Then, the above left-hand side defines a bounded function (see Lemma \ref{lem_bound1} and $\phi\in L^\infty(\pa X\times V)$) and we apply \eqref{A20} to obtain
\begin{eqnarray}
&&\int_{\gamma_{v_-,\omega,y}}|\big[L_{\mC_{v_-,\omega,y}}\big]_{|\Gamma_{v_-,\omega,y}\b\mC_{v_-,\omega,y}}(\phi_{|\Gamma_{v_-,\omega,y}})||\nu_{P_{v_-,\omega,y}}(\gamma_{v_-,\omega,y})\cdot v_-|d\gamma_{v_-,\omega,y}\nonumber\\
&\le &e^{\|\tau\|_\infty\|\sigma\|_\infty}\int_{\gamma_{v_-,\omega,y}}\Big|\big[J_{\mC_{v_-,\omega,y}}\big]_{|\Gamma_{v_-,\omega,y}\b\mC_{v_-,\omega,y}}(\phi_{|\Gamma_{v_-,\omega,y}})\Big||\nu_{P_{v_-,\omega,y}}(\gamma_{v_-,\omega,y})\cdot v_-|d\gamma_{v_-,\omega,y}\nonumber\\
&\le&e^{\|\tau\|_\infty\|\sigma\|_\infty}\int_{\gamma_{v_-,\omega,y}}|\phi_{|\mC_{v_-,\omega,y}}| |\nu_{P_{v_-,\omega,y}}(\gamma_{v_-,\omega,y})\cdot v_-|d\gamma_{v_-,\omega,y},\nonumber
\end{eqnarray}
for a.e. $(v_-,\omega,y)\in {\mathcal M}$. Since $
\big[L_{\mC_{v_-,\omega,y}}\big]_{|\mC_{v_-,\omega,y}}f=f$
for any $f\in L^\infty(\mC_{v_-,\omega,y})$, we finally obtain 
\begin{eqnarray*}
&&\int_{\gamma_{v_-,\omega,y}}|\big[L_{\mC_{v_-,\omega,y}}\big]_{|\Gamma_{v_-,\omega,y}}(\phi_{|\Gamma_{v_-,\omega,y}})||\nu_{P_{v_-,\omega,y}}(\gamma_{v_-,\omega,y})\cdot v_-|d\gamma_{v_-,\omega,y}\nonumber\\
&\le&(1+e^{\|\tau\|_\infty\|\sigma\|_\infty})\int_{\gamma_{v_-,\omega,y}}|\phi| |\nu_{P_{v_-,\omega,y}}(\gamma_{v_-,\omega,y})\cdot v_-|d\gamma_{v_-,\omega,y}.\label{A23}
\end{eqnarray*}
We combine \eqref{A22} and \eqref{A23} to obtain that $\big[L_{\mC}]_{|\Gamma}$ is a bounded operator from $L_*(\Gamma)$ to $L_*(\Gamma)$ with $e^{\|\tau\|_\infty\|\sigma\|_\infty}$ as a uniform bound. Here a straightforward computation for $X=B(0,r_1)\b B(0,r_2)$ gives 
$
\|\tau\|_\infty=2\sqrt{r_1^2-r_2^2}.
$
{}\hfill$\Box$

\paragraph{Proof of \eqref{A2} and their higher dimensional counterparts.}
We prove the first estimate in \eqref{A2}.
We first assume that $X$ is a disk of center 0 and radius $1$. The constant $C_X$ behaves linearly with the radius of the disk. By symmetry, we can assume that $x_-=(-\sqrt{1-q^2},q)$ for some $q\in (0,1)$ and $v_-=(1,0)$. 
We introduce the angle $\theta_q=\arcsin(q)$. The point $x_-$ is located at the angle $\pi-\theta_q$ and its tangent vector has an angle $\pi/2-\theta_q$. We write $v=(\cos(\theta),\sin(\theta)))$, $\theta\in (-\pi,\pi)$. Then for a point $x\in \pa X$
$$
\chi_{\pa X_{-,x,v,v_-}}(x_-)=1
$$
if and only if $x=x_-+sv_-+tv$ for some positive number $t$ and $0<s<2\sqrt{1-q^2}$. Let 
$
\tilde x_-=(\cos(\theta_q),\sin(\theta_q))$. The lines $x_-+\R v_-$, $x_-+\R v$ and $\tilde x_-+\R v$ cross the boundary at the point  $\tilde x_-$, $x_-+2\cos(\theta+\theta_q)v=(\cos(\theta_q+2\theta),\sin(\theta_q+2\theta))$ and $\tilde x_-+2\cos(\theta+\pi-\theta_q)v=(\cos(-\pi-\theta_q+2\theta),\sin(-\pi-\theta_q+2\theta))$, respectively.
Then $x$ lies at the intersection of the boundary with the sector  based at $x_-$ and directed by $(v_-,v)$ and the sector based at $\tilde x_-$ and directed by $(-v_-,v)$. 
We denote 
$
A(x_-, v_-,v)=\int_{\pa X}\chi_{\pa X_{-,v_-,v,x}}(x_-)
|\nu(x)\cdot v|d\mu(x).
$
Then,
\begin{equation}
A(x_-,v_-,v)=
\int_{[\theta_q,2\theta+\theta_q]}|\cos(\theta-\phi)|d\phi
\le 4\pi\sin(\theta)=4\pi|v_-\cdot v^\bot|.\label{A300}
\end{equation}

When $\theta\in (-\pi/2-\theta_q, -\pi/2+\theta_q)$, then $v$ is ingoing at $x_-$ and at $\tilde x_-$ and $x=(\cos(\phi),\sin(\phi))$ for some $\phi$ between the angles $\theta_q+2\theta$ and $\pi-\theta_q+2\theta$. Then $\phi-2\theta$ is between the angles $\theta_q$ and $\pi-\theta_q$. Hence $|\cos(\phi-2\theta)|\le \cos(\theta_q)\le |\sin(\theta)|$. Then we obtain that 
$|\cos(\phi-2\theta+\theta)|\le |\cos(\phi-2\theta)|+|\sin(\theta)|$ and $A(x_-,v_-,v)$ is bounded by $4\pi|\sin(\theta)|$ as above.

When $\theta\in (-\pi/2+\theta_q,\pi/2-\theta_q)$, then $v$ is ingoing at $x_-$ and outgoing at $\tilde x_-$ and 
$x=(\cos(\phi),\sin(\phi))$ for some $\phi$ between the angle $\theta_q$ and $2\theta+\theta_q$. In that case $A(x_-, v_-,v)$ is trivially bounded by the length $2|\theta|$ which is bounded by $C\sin(\theta)$ for some constant $C$.

When $\theta\in (\pi/2+\theta_q, \pi)$, then $v$ is outgoing at $x_-$ and ingoing at $\tilde x_-$ and $x=(\cos(\phi),\sin(\phi))$ for some $\phi$ between the angles $-\pi-\theta_q+2\theta$ and $\pi-\theta_q$.  Then for $\theta'=\theta-\pi$, we have $\theta'\in (-\pi/2+\theta_q, 0)$ and  $\phi$ between the angles $\pi-\theta_q+2\theta'$ and $\pi-\theta_q$. In that case $A(x_-, v_-,v)$ is bounded by the length $2|\theta'|$ which is bounded by $C|\sin(\theta')|=C\sin(\theta)$ for some constant $C$.

When $\theta\in (-\pi, -\pi/2-\theta_q)$, then $v$ is outgoing at $x_-$ and ingoing at $\tilde x_-$ and $x=(\cos(\phi),$ $\sin(\phi))$ for some $\phi$ between the angles $\pi-\theta_q+2\theta$ and $-\pi-\theta_q$. Then for $\theta'=\theta+\pi$, we have $\theta'\in (0, \pi/2-\theta_q)$ and  $\phi$ between the angles $-\pi-\theta_q+2\theta'$ and $-\pi-\theta_q$. Again $A(x_-, v_-,v)$ is bounded by $C|\sin(\theta)|$ for some constant $C$.

Now assume that $X$ is the annulus $B(0,1)\b \overline{B(0,r)}$ for some $0<r<1$. (Again $C_X$ behaves linearly with the radius of the bigger disk.) Let $A$ be a measurable subset of $\pa B(0,r)$ so that any straight line in direction $v$ has at most one point in $A$. Then for $v\in V$ consider the measurable set 
$A_+=\{ x+tv\in \pa B(0,1)\ |\ x\in A,\ t>0\}$. One has
$$
\int_{A}|\nu(x)\cdot v|d\mu(x)=\int_{A_+}|\nu(x)\cdot v|d\mu(x).
$$
Hence for $(x_-,v_-)\in \pa X\times V$
$$
\int_{\pa X}\chi_{\pa X_{-,v_-,v,x}}(x_-)
|\nu(x)\cdot v|d\mu(x)\le \int_{\pa \tilde X}\chi_{\pa \tilde X_{-,v_-,v,x}}(\tilde x_-)
|\nu(x)\cdot v|d\mu(x),
$$
where $\tilde X$ is the disk $D(0,1)$ and $\tilde x_-\in \pa B(0,1)$, $\tilde x_- =x$ if $x\in \pa B(0,1)$ and  $x_-=\tilde x_-+sv_-$, $\tilde x\in$ for some positive $s$ otherwise. This ends the proof of the first estimate in \eqref{A2}.

We now prove that the first estimate implies the second one in \eqref{A2} when $X$ is a planar domain. Let $(v,v_-)\in V^2$ and $x_-\in \pa X$. We perform the change of variables $x=y-tv$, $y\in \pa X$, $dx=|\nu(y)\cdot v|dt d\mu(y)$, $\nu(y)\cdot v>0$ and we obtain the formula 
$$
\int_X\chi_{\pa X_{-,v_-,v,x}}(x_-)dx 
=\int_{y\in \pa X,\ \nu(y)\cdot v>0}\int_0^{\tau_-(y,v)}\chi_{\pa X_{-,v_-,v,y-tv}}(x_-)dt
(\nu(y)\cdot v)d\mu(y).
$$
Then the definition of the set $\pa X_{-,v_-,v,y-tv}$ implies that $\pa X_{-,v_-,v,y-tv}\subseteq \pa X_{-,v_-,v,y}$ for any $t$. Hence the right hand side is bounded by
$$
\int_{y\in \pa X\atop \nu(y)\cdot v>0}{\tau_-(y,v)}\chi_{\pa X_{-,v_-,v,y}}(x_-)dt
(\nu(y)\cdot v) dtd\mu(y)
\le 2r_1\int_{y\in \pa X}\chi_{\pa X_{-,v_-,v,y}}(x_-)
|\nu(y)\cdot v|d\mu(y).
$$
Thus, the first estimate in \eqref{A2} implies the second one.

The  higher dimensional counterpart \eqref{A200} also follows from a straightforward use of the first estimate \eqref{A2} on the planar domain $X\cap P_{v_-,\omega,y}$ which is either an annulus with ``$(r_1^2,r_2^2)$"$=(r_1^2-|y|^2,r_2^2-|y|^2)$, $0\le |y|< r_2$, or a disk of center 0 and radius the square root of  $r_1^2-|y|^2$. We use notation introduced before \eqref{A200}. 

The  higher dimensional counterpart \eqref{A201} is also derived by \eqref{A2} applied on the planar domain $X\cap P_{v_-,\omega,y}$. Indeed consider a boundary point $x\in \gamma_{v_-,\omega,y}$ and a unit vector $v$ lying on the plane $P_{v_-,\omega,y}$. We denote by $\nu_{P_{v_-,\omega,y}}(x)^\bot$ a unit vector lying on the plane $P_{v_-,\omega,y}$ and orthogonal to the normal vector $\nu_{P_{v_-,\omega,y}}(x)$. The vector $\nu_{P_{v_-,\omega,y}}(x)^\bot$ is tangent to the boundary $\gamma_{v_-,\omega,y}$. Hence it is also a tangent vector to the d-dimensional domain $X$. Therefore 
$
\nu(x)$ is orthogonal to  $\nu_{P_{v_-,\omega,y}}(x)^\bot$. Then $v=\alpha_1\nu_{P_{v_-,\omega,y}}(x)+\alpha_2\nu_{P_{v_-,\omega,y}}(x)^\bot$ for some real numbers $\alpha_i$, $i=1,2$, and obviously
$$
|\nu(x)\cdot v|=|\alpha_1\nu(x)\cdot\nu_{P_{v_-,\omega,y}}(x)|\le |\alpha_1|=|\nu_{P_{v_-,\omega,y}}(x)\cdot v|.
$$
This proves the estimate
$|\nu(x)\cdot v(\theta,\omega)|\le|\nu_{P_{v_-,\omega,y}}(x)\cdot v(\theta,\omega)|$ when $x=x(r_i,y,\theta')$,
and we have reduced the proof of \eqref{A201} to \eqref{A2}.\hfill $\Box$

\section{Proof of Lemma \ref{lem_trace_layer} and Theorem \ref{thm_eta}}
\label{sec:proofeps}

\subsection{Proof of Lemma \ref{lem_trace_layer}}

The Lemma follows by induction on the number $k$ of layers $Y_{k,N}$, $1\le k\le N$.

\paragraph{The base case.}
The trace of the solution $u$ on the boundary $\pa Z_{1\over N}\times V$ is written as 
$$
u_{|\Gamma(Z_{1\over N})}=f_0^{(1)}+f_1^{(1)},\ f_0^{(1)}=\big[L_{\mC_1}\phi\big]_{|\Gamma(Z_{1\over N})},\ f_1^{(1)}=\big[(I-T_{\mC_1}^{-1}K)^{-1}T_{\mC_1}^{-1}K L_{\mC_1}\phi\big]_{|\Gamma(Z_{1\over N})}.
$$

Then by definition of $L_{\mC_1}$ we have 
\begin{equation}
f_0^{(1)}(x,v)=\phi(x-\tau_-(Z_{1\over N})(x,v)v,v)e^{-{\tau_-(Z_{1\over N})(x,v) \over \ep}},\ (x,v)\in \Gamma(Z_{1\over N}).\label{B11}
\end{equation}
Obviously 
\begin{equation}
\|f_0^{(1)}\|_1\le \|\phi\|,\label{B1}
\end{equation}
and by definition of the constant $C_{p,\ep}$
\begin{equation}
\|f_1^{(1)}\|_{L^p(\Gamma(Z_1),d\xi)}\le C_{p,\ep} \|\phi\|. \label{B3}
\end{equation}

\paragraph{The induction step.}
Now assume that the statements of the Lemma are proved in all layers $Y_{l,N}$, $1\le l\le k$. We have
$$
u_{|\Gamma(Z_{s+{1\over N}})\b\mG_-(s+{1\over N},s)}=f_0^{(k+1)}+f_1^{(k+1)},
$$
where
\begin{equation}
f_0^{(k+1)}=\big[L_{\mC_{k+1}}f_0^{(k)}\big]_{|\Gamma(Z_{s+{1\over N}})\b\mG_-((s+{1\over N},s)},\label{B4}
\end{equation}
and 
\begin{equation}
(f_1)^{(k+1)}_{\Gamma(Z_{s+{1\over N}})\b \mG_-(s+{1\over N},s)}=A_{\mC_{k+1}}f_1^{(k)}+\big[(I-T_{\mC_{k+1}}^{-1}K)^{-1}T_{\mC_{k+1}}^{-1}K L_{\mC_{k+1}}f_0^{(k)}\big]_{|\Gamma(Z_{s+{1\over N}})\b \mG_-(s+{1\over N},s)}.\label{B5}
\end{equation}
Here we extended $f_0^{(k)}$ and $f_1^{(k)}$ by $0$ on $\mG_-(s+{1\over N},s)$.

From the definition of the operator $L_{\mC_{k+1}}$ and formula \eqref{f0i} for the index $k$, we obtain \eqref{f0i} with $k$ replaced by $k+1$.
The ballistic term $f_0^{(k+1)}$ can be considered as the ballistic term in the domain $Z_{s+{1\over N}}\b\bar Z_{1\over N}$: Introduce the set $\mC=\mC_-\cup\mC_+$ where $\mC_-$ as $\mG(s+{1\over N},{1\over N})\cup \Gamma_+(Z_{1\over N})$ and $\mC_+=\Gamma_-(Z_{1\over N})$. Then 
\begin{equation}
f_0^{(k+1)}=\big[L_{\mC}f_0^{(1)}\big]_{|\Gamma(Z_{s+{1\over N}})},\label{B10}
\end{equation}
and by Lemma \ref{lem_1p} ($(r_1,r_2)=(1+s+{1\over N}, 1+{1\over N})$) and \eqref{B1} we obtain that 
\begin{equation}
\|f_0^{(k+1)}\|_{k+1}=(1+e^{2\sqrt{s(2+s+{2\over N})}\over\ep})\|f_0^{(1)}\|_1\le (1+e^{2\sqrt{s(2+s+{2\over N})}\over\ep})\|\phi\|_1.\label{B6}
\end{equation}
This proves \eqref{f0e}.

From \eqref{B0} 
$$
\|A_{\mC_{k+1}}f_1^{(k)}\|_{L^p(\Gamma(Z_{s+{1\over N}})\b \mG_-(s+{1\over N},s),d\xi)}\le C_p\|f_1^{(k)}\|_{L^p(\Gamma(Z_s),d\xi)},
$$
and
$$
\|\big[(I-T_{\mC_{k+1}}^{-1}K)^{-1}T_{\mC_{k+1}}^{-1}K L_{\mC_{k+1}}f_0^{(k)}\big]_{|\Gamma(Z_{s+{1\over N}})}
\|_{L^p(\Gamma(Z_{s+{1\over N}}),d\xi)}\le C_{p,\ep} \|f_0^{(k)}\|_k.
$$
Therefore 
\begin{equation}
\|(f_1)^{(k+1)}\|_{L^p(\Gamma(Z_{s+{1\over N}}),d\xi)}\le C_p\|f_1^{(k)}\|_{L^p(\Gamma(Z_s),d\xi)}+C_{p,\ep} \|f_0^{(k)}\|_k.\label{B7}
\end{equation}
Hence
$$
C_p^{-k-1}\|(f_1)^{(k+1)}\|_{L^p(\Gamma(Z_{s+{1\over N}}),d\xi)}\le C_p^{-1}\|f_1^{(1)}\|_{L^p(\Gamma(Z_s),d\xi)}+C_{p,\ep}\sum_{l=1}^k C_p^{-l} \|f_0^{(l)}\|_l.
$$
We use \eqref{B6} and \eqref{B3} and we obtain \eqref{f1e}.\hfill$\Box$

\subsection{Proof of Theorem \ref{thm_eta}}
Any ray $x+\R v$, $(x,v)\in \mU_\eta$, intersects the ball $Z_{1\over N}$ without penetrating $Z_0$ when $\eta\le{1\over 2N}$. In addition 
\begin{equation}
\int_{\mU_\eta}d\xi\le C\eta,\label{B20}
\end{equation}
for a universal constant $C$. And $\Gamma_-(Z_1)\cap {\rm supp} \rho_\eta\subseteq \mU_\eta$.

By Lemma \ref{lem_trace_layer}
$[u_\eta]_{|Z_1}=f_{0,\eta}^{(N)}+f_{1,\eta}^{(N)}$ where $f_{0,\eta}^{(N)}$ is given by \eqref{f0i}. Here by definition of $\phi_\eta$
\begin{equation}
f_0^{(N)}(x,v)=\Big[\chi_{\mU_\eta}(x,v)e^{\tau_+(Z_1\b\bar Z_{1\over N})(x,v)\over \ep}+\chi_{\mU_{+,1}}(x,v)e^{-{d_{-,1}(x,v)\over \ep}}\Big]\rho_\eta(x-(x\cdot v)v,v),\label{B21}
\end{equation}
for $(x,v)\in \Gamma(Z_1)$.
From \eqref{B21}
$$
\int_{\mU_\eta}f_{0,\eta}^{(N)}d\xi \ge e^{\inf_{\mU_\eta} \tau_+(Z_1\b\bar Z_{1\over N})\over \ep}\int_{\mU_\eta}\rho_\eta(x-(x\cdot v)v,v) d\xi(x,v).
$$
Note that 
$$
\inf_{\mU_\eta} \tau_+(Z_1\b\bar Z_{1\over N})\ge 1-{1\over N},
$$
and 
\begin{eqnarray*}
\int_{\mU_\eta}\rho_\eta(x-(x\cdot v)v,v) d\xi(x,v)&=&\int_{\Gamma_-(Z_1)}\rho_\eta(x-(x\cdot v)v,v) d\xi(x,v)\\
&=&{1\over 2}\int_{\pa Z_1\times V}\rho_\eta(x-(x\cdot v)v,v) d\xi(x,v).
\end{eqnarray*}
Since $\pa Z_1$ is a sphere of radius $2$ and center 0, we have $\nu(x)=2^{-1}x$ for $x\in \pa Z_1$ and using spherical coordinates on $\pa Z_1$, ``$x=2(\sin(\theta)v+\cos(\theta)\omega)$, $\omega\cdot v=0$" we actually compute:
\begin{eqnarray*}
&&\int_{\pa Z_1\times V}\rho_\eta(x-(x\cdot v)v,v) {|\nu(x)\cdot v|}d\mu(x)dv\\
&=&\eta^{-1}2^{d-1}|V||\S^{d-2}|\int_{-{\pi\over 2}}^{\pi\over 2}\rho\big({1+{1\over 2N}-2|\cos(\theta)|\over \eta}\big)\cos(\theta)^{d-2}|\sin(\theta)|d\theta\\
&=&\eta^{-1}2^d|V||\S^{d-2}|\int_0^1\rho\big({1+{1\over 2N}-2s\over \eta}\big)s^{d-2}ds\\
&=&2^{d-1}|V||\S^{d-2}|\int_{-1+{1\over  N}\over \eta}^{1+{1\over 2N}\over \eta}\rho(s)({1+{1\over 2N}-\eta s\over 2})^{d-2}ds\\
&\ge& 2^{d-1}|V||\S^{d-2}|\int_{-1}^{1}\rho(s)({1+{1\over 2N}-\eta s\over 2})^{d-2}ds
\ge 2|V||\S^{d-2}|
\end{eqnarray*}
when $\eta\le (2N)^{-1}$. Therefore
\begin{equation}
\int_{\mU_\eta}f_{0,\eta}^{(N)}d\xi\ge e^{ 1-{1\over N}\over \ep}2|V||\S^{d-2}|\textrm{ when }0<\eta\le {1\over 2N}.\label{B22}
\end{equation}
(We used the convention $|\S^{d-2}|=2$ in dimension $d=2$.)

Then by \eqref{B20} and \eqref{f1e}
\begin{eqnarray}
\big|\int_{\mU_\eta}f_{1,\eta}^{(N)}d\xi\big|
&\le& C^{1\over q}\eta^{1\over q}\|f_{1,\eta}^{(N)}\|_{L^p(\Gamma_-(Z_1),d\xi)}\nonumber\\
&\le&C^{1\over q} C_{p,\ep}C_p \big(C_p^{N-1}+{C_p^N-1\over C_p-1}(1+e^{2\sqrt{3+{2\over N}}\over\ep})\big)\eta^{1\over q}\|\phi_\eta\|_1,\label{B23}
\end{eqnarray}
where $p^{-1}+q^{-1}=1$.
It remains to compute $\|\phi_\eta\|_{L^1(\Gamma_-(Z_{1\over N}),d\xi)}$ and find an upper bound for $\|\phi_\eta\|_1$.

\paragraph{Proof of formula \eqref{B30a}.}
The computation is similar to the derivation of \eqref{B22}. Since $\pa Z_{1\over N}$ is a sphere of radius $1+{1\over N}$ and center 0, we have $\nu(x)=(1+{1\over N})^{-1}x$ for $x\in \pa Z_{1\over N}$ and using spherical coordinates on $\pa Z_{1\over N}$, ``$x=(1+{1\over N})(\sin(\theta)v+\cos(\theta)\omega)$, $\omega\cdot v=0$" we actually compute:
\begin{eqnarray}
&&\|\phi_\eta\|_{L^1(\Gamma_-(Z_{1\over N}),d\xi)}=\int_{\Gamma_-(Z_{1\over N})}|\phi_\eta|d\xi={1\over 2}\int_{\pa Z_{1\over N}\times V}\rho_\eta(x-(x\cdot v)v,v)d\xi(x,v)\nonumber\\
&=&|V|\eta^{-1}(1+{1\over N})^{d-1}|\S^{d-2}|\int_{0}^{\pi\over 2}\rho\big({1+{1\over 2N}-(1+{1\over N})\cos(\theta)\over \eta}\big)\sin(\theta)\cos(\theta)^{d-2}d\theta\nonumber\\
&=&|V||\S^{d-2}|\int_{-{1\over 2\eta N}}^{1+{1\over 2N}\over \eta}\rho(s)({1+{1\over 2N}}-\eta s)^{d-2}ds\nonumber\\
&=&|V||\S^{d-2}|\int_{-1}^{1}\rho(s)({1+{1\over 2N}}-\eta s)^{d-2}ds.\label{B24}
\end{eqnarray}
Then we use that $\int_\R\rho=1$, $\rho\ge 0$ and obtain \eqref{B30a}.

\paragraph{Upper bound for $\|\phi_\eta\|_1$ in dimension $d\ge 3$.}
We recall that
\begin{equation}
\|\phi_\eta\|_{L*(\Gamma(Z_{1\over N}))}=\int_V \Big({\rm supess}_{(\omega,y)\in {\mathcal M}_{v_-}} \int_{\gamma_{v_-,\omega,y}} |\phi_\eta|(\gamma,v_-)|\nu_{P_{v_-,\omega,y}}(\gamma)\cdot v_-|d\gamma \Big)dv_-,
\end{equation}
Here $\gamma_{v_-,\omega,y}$ and ${\mathcal M}$ are defined at the end of section \ref{sec:forward} when $(r_1,r_2)=(1+{1\over N},1)$. The function $\phi_\eta$ is invariant under rotation in the $v_-$- variable and $\phi_\eta$ also vanishes on $\pa Z_0$. Hence 
\begin{equation}
\|\phi_\eta\|_{L*(\Gamma(Z_{1\over N}))}= |V|{\rm supess}_{(v_-,\omega,y)\in {\mathcal M}} \int_{\gamma_{v_-,\omega,y}\cap \pa Z_{1\over N}}
|\phi_\eta|(\gamma,v_-)|\nu_{P_{v_-,\omega,y}}(\gamma)\cdot v_-|d\gamma.\label{B40}
\end{equation}

Let  $(v_-,\omega,y)\in {\mathcal M}$. We omit the indices ${v_-,\omega,y}$ for $\gamma_{v_-,\omega,y}$ and $P_{v_-,\omega,y}$:
$$
\int_{\gamma\cap \pa Z_{1\over N}} |\phi_\eta|(\gamma,v_-)|\nu_{P}(\gamma)\cdot v_-|d\gamma
=\eta^{-1}\int_{\gamma\cap \pa Z_{1\over N}} \rho({1+{1\over 2N}-|\gamma-(\gamma\cdot v_-)v_-|\over \eta})|\nu_{P}(\gamma)\cdot v_-|d\gamma.
$$
As above we introduce the coordinates $\gamma=y+\sqrt{(1+{1\over N})^2-|y|^2}(\cos(\theta)\omega+\sin(\theta)v_-)$ and the change of variables $s=\cos(\theta)$ and we obtain
\begin{eqnarray*}
\int_{\gamma\cap \pa Z_{1\over N}} |\phi_\eta|(\gamma,v_-)|\nu_{P}(\gamma)\cdot v_-|d\gamma
&=&{4\over \eta}\sqrt{((1+{1\over N})^2-|y|^2)}\\
&&\times\int_0^1 \rho({1+{1\over 2N}-\sqrt{|y|^2+((1+{1\over N})^2-|y|^2)s^2}\over \eta})ds.
\end{eqnarray*}
(The length of the circle $\gamma\cap \pa Z_{1\over N}$ is $2\pi \sqrt{(1+{1\over N})^2-|y|^2}$.)
Then we perform the change of variables 
$$
s={\sqrt{(1+{1\over 2N}+\eta t)^2-|y|^2}\over \sqrt{(1+{1\over N})^2-|y|^2}},\ ds={\eta(1+{1\over 2N}+\eta t) dt\over \sqrt{(1+{1\over N})^2-|y|^2}\sqrt{(1+{1\over 2N}+\eta t)^2-|y|^2}}
$$
and 
\begin{eqnarray*}
\int_{\gamma\cap \pa Z_{1\over N}} |\phi_\eta|(\gamma,v_-)|\nu_{P}(\gamma)\cdot v_-|d\gamma
&=&4\int^{{1\over 2N\eta}}_{-1-{1\over 2N}+|y|\over \eta}{(1+{1\over 2N}+\eta t)\rho(t)\over \sqrt{(1+{1\over 2N}+\eta t)^2-|y|^2}}dt\\
&\le&4\int^{1}_{\min\big({-1-{1\over 2N}+|y|\over \eta},1\big)}{\rho(t)\sqrt{1+{1\over 2N}+\eta t}\over \sqrt{1+{1\over 2N}+\eta t-|y|}}dt.
\end{eqnarray*}
Let us write $y=1+{1\over 2N}+t_y\eta$. Then
$$
\int_{\gamma\cap \pa Z_{1\over N}} |\phi_\eta|(\gamma,v_-)|\nu_{P}(\gamma)\cdot v_-|d\gamma
\le {4\sqrt{1+{1\over N}}\over \sqrt{\eta}}\int^{1}_{\min(t_y,1)}{\rho(t)\over \sqrt{t-t_y}}dt\le \tilde C |V|^{-1}\eta^{-{1\over 2}},
$$
where 
$$
\tilde C=2^{3\over 2}|V|\sup_{\sigma\in (-\infty,1)}\int_{-1}^1{\rho(t)\over \sqrt{|t-\sigma|}}d\sigma
$$

Finally we obtain
\begin{equation}
\|\phi_\eta\|_{L*(\Gamma(Z_{1\over N}))}\le \tilde C \eta^{-{1\over 2}}.\label{B40b}
\end{equation}

\paragraph{Proof of \eqref{B30c} in dimension $d=2$.} 
From \eqref{B23} and \eqref{B30a} $(\|\phi_\eta\|_1=2)$ it follows that
\begin{eqnarray}
\big|\int_{\mU_\eta}f_{1,\eta}^{(N)}d\xi\big|
&\le&|V||\S^{d-2}|C^{1\over q} C_{p,\ep}C_p \big(C_p^{N-1}+{C_p^N-1\over C_p-1}(1+e^{2\sqrt{3+{2\over N}}\over\ep})\big)\eta^{1\over q}.\label{B41a}
\end{eqnarray}
Therefore we combine \eqref{B41a} and \eqref{B22}  and we obtain \eqref{B30c} where
$$
\eta^{1\over q}\le {e^{ 1-{1\over N}\over \ep}\over C^{1\over q} C_{p,\ep}C_p \big(C_p^{N-1}+{C_p^N-1\over C_p-1}(1+e^{2\sqrt{3+{2\over N}}\over\ep})\big)}.
$$

\paragraph{Proof of \eqref{B30c} in dimension $d\ge 3$.} 
From \eqref{B23} and \eqref{B40b} it follows that
\begin{eqnarray}
\big|\int_{\mU_\eta}f_{1,\eta}^{(N)}d\xi\big|
&\le&C^{1\over q}\tilde C C_{p,\ep}C_p \big(C_p^{N-1}+{C_p^N-1\over C_p-1}(1+e^{2\sqrt{3+{2\over N}}\over\ep})\big)\eta^{{1\over q}-{1\over 2}}.\label{B41b}
\end{eqnarray}
Therefore we combine \eqref{B41b} and \eqref{B22}  and we obtain \eqref{B30c} when 
$$
\eta^{{1\over q}-{1\over 2}}\le {e^{ 1-{1\over N}\over \ep}|V||\S^{d-2}|\over C^{1\over q} \tilde C C_{p,\ep}C_p \big(C_p^{N-1}+{C_p^N-1\over C_p-1}(1+e^{2\sqrt{3+{2\over N}}\over\ep})\big)}.
$$
{}\hfill $\Box$

\section{Proof of Lemmas \ref{lem_bound1} and \ref{lem_K}}
\label{sec:appbound}

\subsection{Proof of Lemma \ref{lem_bound1}}
We first prove \eqref{i1a}.
We have 
\begin{eqnarray*}
\|L_{\mC,\sigma} g\|_{W^p}&=&\|(\tau\sigma) \tau^{-{1\over p}}L_{\mC,\sigma} g\|_{L^p}+\|\tau^{-{1\over p}}L_{\mC,\sigma} g\|_{L^p}\\
&\le &(1+\|\tau\sigma\|_\infty)\|\tau^{-{1\over p}}L_{\mC,\sigma} g\|_{L^p}\le  e^{\|\tau\sigma\|_\infty}(1+\|\tau\sigma\|_\infty)\|g\|_{L^p(\mC,d\xi)}.
\end{eqnarray*}
Similarly we obtain the estimate of $L_\mC g$ with respect to $\tilde W^p$.

Then we prove \eqref{i2} and \eqref{i3}.
Let $D_\pm:=\{(x,v)\in X\times V\ | \ (x\pm \tau_\pm(x,v)v,v))\in  \mC_\pm\}$. Then 
\begin{eqnarray*}
\Big(\int_{X\times V}\tau^{-\ep p}|T_\mC^{-1}f(x,v)|^p dx dv\Big)^{1\over p}
&\le& e^{\|\tau\sigma\|_\infty}\Big[\Big(\int_{D_+}\tau^{-\ep p}\big(\int_0^{\tau_+(x,v)}\!\!\!\!\!\!\!\!\!|f(x+sv,v)|ds\big)^p dx dv\Big)^{1\over p}\\
&&+\Big(\int_{D_-}\tau^{-\ep p}\big(\int_0^{\tau_-(x,v)}\!\!\!\!\!\!\!\!\!|f(x-sv,v)|ds\big)^p dx dv\Big)^{1\over p}\Big].
\end{eqnarray*}
Next,
\begin{eqnarray*}
&&\int_{D_+}\tau^{-\ep p}\big(\int_0^{\tau_+(x,v)}|f(x+sv,v)|ds\big)^p dx dv\\
&\le & \int_{D_+}\tau^{p(1-\ep) -1}\int_0^{\tau_+(x,v)}|f(x+sv,v)|^p ds dx dv,\\
&=&\int_{\mC_+}\int_0^{\tau(x,v)}\tau(x,v)^{p(1-\ep) -1}\int_0^t|f(x-tv+sv,v)|^p ds dt d\xi(x,v)\\
&=&\int_{\mC_+}\int_0^{\tau(x,v)}\tau(x,v)^{p(1-\ep) -1}(\tau(x,v)-s)|f(x-sv,v)|^p ds  d\xi(x,v)\ \le\ \|\tau^{1-\ep}f\|_{L^p(D_+)}^p.
\end{eqnarray*}
Similarly
\begin{eqnarray*}
\int_{D_-}\tau^{-\ep p}\big(\int_0^{\tau_-(x,v)}|f(x-sv,v)|ds\big)^p dx dv\le \|\tau^{1-\ep}f\|_{L^p(D_-)}^p,
\end{eqnarray*}
which provides \eqref{i2}.

Estimate \eqref{i3} follows from \eqref{i2} and the identity $v\cdot\nabla_x T^{-1}_\mC f+\sigma T^{-1}_\mC f=f$.\hfill $\Box$

\subsection{Proof of Lemma \ref{lem_K}}
We only treat the case $1<p<\infty$.
Let $\phi\in L^p(X\times V)$. Assume \eqref{i10a}. We have
\begin{eqnarray*}
&&\int_{X\times V}|K\phi(x,v)|^p dx dv=\int_{X\times V}\big(\int_V |k(x,v',v)||\phi(x,v')|dv'\big)^p dx dv\\
&&\le \int_{X\times V}\big(\int_V |k(x,v'',v)|^{p\over p-1}dv''\big)^{p-1} \big(\int_V |\phi(x,v')|^p dv'\big) dx dv\\
&\le& \|\phi\|_{L^p(X\times V)}^p \big\|\int_V\big(\int_V|k|^{p\over p-1}(.,v',v)dv'\big)^{p-1}dv\big\|_\infty.
\end{eqnarray*}
Similarly assume \eqref{i11a} and
let $\phi\in \tau^{1\over p}L^p(X\times V)$. Then
\begin{eqnarray*}
&&\int_{X\times V}\tau^{p-1}|K\phi(x,v)|^p dx dv\le \int_{X\times V}\tau^{p-1}\big(\int_V k(x,v',v) \phi(x,v')dv'\big)^p dx dv\\
&\le& \int_{X\times V}\tau^{p-1}\big(\int_V k(x,v',v) \phi(x,v')dv'\big)^p dx dv\\
&\le& \int_{X\times V}\tau^{p-1}\int_V \tau(x,v')^{-1} |\phi(x,v')|^p dv' \big(\int_V |k(x,v'',v)|^{p\over p-1}\tau(x,v'')^{1\over p-1} dv''\big)^{p-1}dx dv \\
&\le& \|\int_V \tau^{p-1} \big(\int_V |k(x,v'',v)|^{p\over p-1}\tau(x,v'')^{1\over p-1} dv''\big)^{p-1}dv\|_{L^\infty(X)}\\
&&\times\int_{X\times V} \tau(x,v')^{-1}|\phi(x,v')|^p dv' dx.
\end{eqnarray*}

Now assume \eqref{i10b}. 
Let $(\phi,\psi)\in L^p(X\times V)\times L^{p\over p-1}(X\times V)$. We have
\begin{eqnarray*}
&&|\int_{X\times V}K\phi(x,v) \psi(x,v)dx dv|\le\int_{X\times V\times V}|k|(x,v',v)|\phi|(x,v')\psi(x,v) dx dv dv'\\
&\le& \big(\int_{X\times V\times V}|k(x,v',v)||\phi|^p(x,v')dv'dx dv\big)^{1\over p}\\
&&\times\big(\int_{X\times V\times V}|k(x,v',v)|\psi|^{p\over p-1}(x,v)dv'dx dv\big)^{p-1\over p}\\
&=&\|\sigma_s\|_{\infty}^{1\over p} \|\sigma_s'\|_{\infty}^{p-1\over p} \|\phi\|_{L^p(X\times V)}\|\psi\|_{L^{p\over p-1}(X\times V)}.
\end{eqnarray*}
Similarly assume \eqref{i11b}.
Let $(\phi,\psi)\in \tau^{1\over p}L^p(X\times V)\times \tau^{p-1\over p}L^{p\over p-1}(X\times V)$. We have
\begin{eqnarray*}
|\int_{X\times V}K\phi(x,v) \psi(x,v)dx dv|
&\le& \big(\int_{X\times V\times V}|k(x,v',v)||\phi|^p(x,v')dv'dx dv\big)^{1\over p}\\
&&\times\big(\int_{X\times V\times V}|k(x,v',v)||\psi|^{p\over p-1}(x,v)dv'dx dv\big)^{p-1\over p}\\
&=&\|\tau\sigma_s\|_{\infty}^{1\over p} \|\tau\sigma_s'\|_{\infty}^{p-1\over p} \|\tau^{-{1\over p}}\phi\|_{L^p(X\times V)}\|\tau^{-{p-1\over p}}\psi\|_{L^{p\over p-1}(X\times V)}.
\end{eqnarray*}
{}\hfill$\Box$

\section{Proof of Lemmas \ref{lem_smooth}, \ref{lem_choice}, \ref{lem_spec} and \ref{lem_specbd}}
\label{sec:five}

\subsection{Proof of Lemma \ref{lem_smooth}}
We note that 
\begin{equation}
KT_{\Gamma_+}^{-1}f(x,v)=-\int_X{e^{|x-y|\int_0^1\sigma(\ep x+(1-\ep) y,\widehat{y-x})d\ep}k(x,\widehat{y-x},v)\over |x-y|^{d-1}}f(y,\widehat{y-x})dy,\label{F4}
\end{equation}
for $f\in L^2(X\times V)$ (we used a change of variables $y=x+tv'$).
Hence
\begin{eqnarray*}
&&|\big(KT_{\Gamma_+}^{-1}\big)^{d+2}f(x_0,v)|\le e^{(n+2)\diam \|\sigma\|_\infty}\|k\|_\infty^{d+2}\\
&&\times \int_{X^{d+2}}{|f(x_{d+2},\widehat{x_{d+2}-x_{d+1}})|dx_1\ldots dx_{d+2}\over\Pi_{i=1}^{d+2}|x_i-x_{i-1}|^{d-1}}\\
&\le &e^{(d+2)\diam\|\sigma\|_\infty}\|k\|_\infty^{d+2}\\
&&\times\Big[\sup_{(z_0,z_{d+1})\in X^2}\int_{X^d}{dz_1\ldots dz_d\over\Pi_{i=1}^{d+1}|z_i-z_{i-1}|^{d-1}}\Big] \int_{X^2} {|f(x_{d+2},\widehat{x_{d+2}-x_{d+1}})|\over |x_{d+1}-x_{d+2}|^{d-1}}dx_{d+1}dx_{d+2},
\end{eqnarray*}
where we recall that (see for instance \cite{BJ})
$$
C:=\sup_{(z_0,z_{d+1})\in X^2}\int_{X^d}{dz_1\ldots dz_d\over\Pi_{i=1}^{d+1}|z_i-z_{i-1}|^{d-1}}<\infty,
$$
and we have 
\begin{eqnarray*}
\int_{X^2} {|f(x_{d+2},\widehat{x_{d+2}-x_{d+1}})|\over |x_{d+1}-x_{d+2}|^{d-1}}dx_{d+1}dx_{d+2}
&=&\int_{X\times V}\int_0^{\tau_-(x_{d+2},v)} |f(x_{d+2},v)|dt dvdx_{d+2}
\end{eqnarray*}
(we used the change of variables $x_{d+2}=x_{d+1}-tv$). Hence 
$$
|\big(KT_{\Gamma_+}^{-1}\big)^{d+2}f(x_0,v)|\le  C\diam e^{(d+2)\diam \|\sigma\|_\infty}\|k\|_\infty^{d+2}\|f\|_{L^1(X\times V)}<\infty.
$$
Therefore 
$
R u=\lambda^{-1}KT_{\Gamma_+}^{-1}R u=(\lambda^{-1})^{d+2}(KT_{\Gamma_+}^{-1})^{d+2}Ru\in L^\infty(X\times V).
$
Then from \eqref{F4} applied twice it follows that
\begin{eqnarray*}
R u(x,v)&=&\lambda^{-2}\int_{X^2}{e^{|x-x_1|\int_0^1\sigma(\ep x+(1-\ep) x_1,\widehat{x_1-x})d\ep
+|x_1-x_2|\int_0^1\sigma(\ep x_1+(1-\ep)x_2,\widehat{x_2-x_1})}\over |x-x_1|^{d-1}|x_1-x_2|^{d-1}}\\
&&\times k(x,\widehat{x_1-x},v)k(x_1,\widehat{x_2-x_1},\widehat{x_1-x})
R u(x_2,\widehat{x_2-x_1})dx_1 dx_2.
\end{eqnarray*}
Hence, from the above formula, the boundedness of $R u$, and the continuity of the optical parameters $\sigma$ and $k$, it follows that $R u$ is a continuous function on $\bar X\times V$.\hfill $\Box$

\subsection{Proof of Lemma \ref{lem_choice}}
First set
$$
\gamma(x,v',v)=R\tilde u_1(x,v')R\tilde u_2(x,v)+R\tilde u_1(x,v)R\tilde u_2(x,v').
$$
Then
\begin{eqnarray*}
&&\int_{X\times V^2} \gamma\big((P_{\rm even}T^{-1}_{\Gamma_+}R\tilde u_1)(x,v')R\tilde u_2(x,v)+R\tilde u_1(x,v')(P_{\rm even}T^{-1}_{\Gamma_+}R\tilde u_2)(x,v)\big) dx dvdv'\\
&=&\int_X \Big(\int_V R \tilde u_1(x,v)T^{-1}_{\Gamma_+} R \tilde u_1)(x,v)dv\Big)
\Big(\int_V |R \tilde u_2(x,v')|^2dv'\Big) dx\\
&&+\int_X \Big(\int_V R \tilde u_2(x,v)(T^{-1}_{\Gamma_+} R \tilde u_2)(x,v)dv\Big)
\Big(\int_V |R \tilde u_1(x,v')|^2dv'\Big) dx\\
&&+\int_X \Big(\int_V R \tilde u_1(x,v)(T^{-1}_{\Gamma_+} R \tilde u_2)(x,v)dv\Big)
\Big(\int_V R \tilde u_1(x,v')R \tilde u_2(x,v')dv'\Big) dx\\
&&+\int_X \Big(\int_V R \tilde u_2(x,v)(T^{-1}_{\Gamma_+} R \tilde u_1)(x,v)dv\Big)
\Big(\int_V R \tilde u_2(x,v')R \tilde u_1(x,v')dv'\Big) dx.
\end{eqnarray*}
Now we use the property \eqref{i302} and the identity $RT^{-1}_{\Gamma_+} R \tilde u_j=\lambda \tilde u_j$,and we obtain
\begin{eqnarray*}
&&\lambda^{-1}\int_{X\times V^2} \gamma\big((T^{-1}_{\Gamma_+}R\tilde u_1)(x,v')R\tilde u_2(x,v)+R\tilde u_1(x,v')(T^{-1}_{\Gamma_+}R\tilde u_2)(x,v)\big) dx dvdv'\\
&=&\int_X \Big(\int_V  |\tilde u_1(x,v)|^2dv\Big)
\Big(\int_V |R \tilde u_2(x,v')|^2dv'\Big) dx\\
&&+\int_X \Big(\int_V  |\tilde u_2(x,v)|^2dv\Big)
\Big(\int_V |R \tilde u_1(x,v')|^2dv'\Big) dx\\
&&+2\int_X \Big(\int_V (\tilde u_1\tilde u_2)(x,v)dv\Big)
\Big(\int_V (R \tilde u_1 R \tilde u_2)(x,v')dv'\Big) dx\ge 0.
\end{eqnarray*}
We applied Cauchy-Schwarz inequality to each single integral over $V$ at the last line. We also obtain that the right hand side vanishes if and only if
$$
\int_V  |\tilde u_1(x,v)|^2dv
\Big(\int_V |R \tilde u_2(x,v')|^2dv'\Big)=\int_V  |\tilde u_2(x,v)|^2dv
\Big(\int_V |R \tilde u_1(x,v')|^2dv'\Big) ,
$$
\begin{equation}
\int_V (\tilde u_1\tilde u_2)(x,v)dv
\Big(\int_V (R \tilde u_1 R \tilde u_2)(x,v')dv'\Big)
=-\int_V  |\tilde u_1(x,v)|^2dv
\Big(\int_V |R \tilde u_2(x,v')|^2dv'\Big) \label{i306}
\end{equation}
for a.e. $x\in X$. Therefore we consider the measurable set $O$ of $X$ where 
$$
\int_V  |\tilde u_1(x,v)|^2dv
\Big(\int_V |R \tilde u_2(x,v')|^2dv'\Big)=\int_V  |\tilde u_2(x,v)|^2dv
\Big(\int_V |R \tilde u_1(x,v')|^2dv'\Big)\not=0
$$
and we obtain by equality in Cauchy-Schwarz inequality applied to \eqref{i306} 
\begin{equation}
\tilde u_1(x,v)=\ep(x)\tilde u_2(x,v) \textrm{ a.e. }(x,v)\in O\times V,\label{i305}
\end{equation}
where $\ep$ is a real valued measurable function.
But going back to \eqref{i306} we see that the left hand side is the nonnegative number
$$
\ep^2(x)\int_V |\tilde u_1|^2(x,v)dv
\Big(\int_V |R \tilde u_1|^2 (x,v')dv'\Big).
$$
Hence either $O$ is a negligible set or \eqref{i304} does not hold. In the latter case we found a deformation defined by $\gamma$ so that all $\alpha_j$'s can not have the same value.\\

Therefore we now assume that $O$ is negligible. Then we have 
\begin{equation}
\int_V  |\tilde u_1(x,v)|^2dv
\Big(\int_V |R \tilde u_2(x,v')|^2dv'\Big)=\int_V  |\tilde u_2(x,v)|^2dv
\Big(\int_V |R \tilde u_1(x,v')|^2dv'\Big)=0\label{i310}
\end{equation}
for a.e. $x\in X$. In particular we also have 
\begin{equation}
 \int_V (\tilde u_1\tilde u_2)(x,v)dv
\Big(\int_V (R \tilde u_1 R \tilde u_2)(x,v')dv'\Big) =0\label{i311}
\end{equation}
for a.e. $x\in X$.

Now set
$$
\gamma(x,v',v)=R\tilde u_1(x,v')R\tilde u_1(x,v),
$$
and replace the couple of vectors $(\tilde u_1,\tilde u_2)$ by the orthonormal couple 
$({\tilde u_1-\tilde u_2\over \sqrt{2}},{\tilde u_1+\tilde u_2\over \sqrt{2}})$. We compute as before and we use the identities \eqref{i310} and \eqref{i311} and we obtain
\begin{eqnarray}
&&\int_{X\times V^2} \gamma\big(T^{-1}_{\Gamma_+}R{\tilde u_1-\tilde u_2\over \sqrt{2}}(x,v')R{\tilde u_1+\tilde u_2\over \sqrt{2}}(x,v)\nonumber\\
&&+R{\tilde u_1-\tilde u_2\over \sqrt{2}}(x,v')(T^{-1}_{\Gamma_+}R{\tilde u_1+\tilde u_2\over \sqrt{2}})(x,v)\big) dx dvdv'\nonumber\\
&=&{\lambda\over 2}\int_X \Big(\int_V  |\tilde u_1(x,v)|^2dv\Big)
\Big(\int_V |R \tilde u_1(x,v')|^2dv'\Big) dx.\label{i312}
\end{eqnarray}
Then for a.e. $x\in X$ we have 
\begin{eqnarray}
\int_V |R \tilde u_1(x,v)|^2dv&=&\int_V (K\tilde u_1)(x,v)\tilde u_1(x,v)dv\nonumber\\
&\le &\|k\|_{\infty}\big|\int_V  \tilde u_1(x,v)\big|^2\le \|k\|_{\infty}|V|\int_V |\tilde u_1(x,v)|^2dv.\label{i313}
\end{eqnarray}
Therefore we combine \eqref{i312} and \eqref{i313} and we obtain that there exists a positive constant $C$ so that 
\begin{eqnarray*}
&&\int_{X\times V^2} \gamma\big(T^{-1}_{\Gamma_+}R{\tilde u_1-\tilde u_2\over \sqrt{2}}(x,v')R{\tilde u_1+\tilde u_2\over \sqrt{2}}(x,v)\nonumber\\
&&+R{\tilde u_1-\tilde u_2\over \sqrt{2}}(x,v')(T^{-1}_{\Gamma_+}R{\tilde u_1+\tilde u_2\over \sqrt{2}})(x,v)\big) dx dvdv'
\ge C\int_X \
\Big(\int_V |R \tilde u_1(x,v')|^2dv'\Big)^2 dx.
\end{eqnarray*}
Hence the right hand side vanishes if and only if 
$
R\tilde u_1=0,
$
which contradicts that $\tilde u_1$ is a unit eigenvector of the operator $RT^{-1}_{\Gamma_+}R$.
\hfill $\Box$

\subsection{Proof of Lemma \ref{lem_spec}}
Introducing spherical coordinates we have 
for $d\ge 2$
$$
Ff(x)=-\int_0^1 r^{d-1}\int_{\S^{d-1}}{e^{\sigma\sqrt{|x|^2+r^2-2r(x\cdot\omega)}}\over 
\big(|x|^2+r^2-2r(x\cdot\omega)\big)^{d-1\over 2}} f(r\omega)d\omega dr.
$$
For $f(r\omega)=g(r)\phi(\omega)$, $\phi\in H_l(V)$, we obtain that 
$$
F(f)(s\omega)=-\int_0^1 r^{d-1}g(r)\int_{\S^{d-1}}{e^{\sigma\sqrt{s^2+r^2-2rs(\omega'\cdot\omega)}}\over 
\big(s^2+r^2-2rs(\omega'\cdot\omega)\big)^{d-1\over 2}} \phi(\omega') d\omega'dr.
$$
Next, following \cite[VII.3.2]{Na},
$
F(f)(s\omega)=\phi(\omega)\int_0^1 r^{d-1}g(r)f_l(s,r) dr,
$
where
\begin{equation}
f_l(s,r)=-|\S^{d-2}|\int_{-1}^1 {(1-t^2)^{d-3\over 2} G_l^{d-2\over 2}(t) e^{\sigma\sqrt{s^2+r^2-2rst}}\over 
\big(s^2+r^2-2rs t\big)^{d-1\over 2}} dt,\ s\not=r,
\label{D1}
\end{equation}
and $G_l^{d-2\over 2}$ denotes the Gegenbauer polynomial of degree $l$ with normalizing condition $G_l^{d-2\over 2}(1)$ $=1$ which is the unique polynomial (with values 1 at 1) of the equation \cite[Chapter V, 5.1.3]{MOS}
\begin{equation}
(1-x^2){d^2 G_l^{d-2\over 2}\over dx^2}(x)-(d-1)x{d G_l^{d-2\over 2}\over d x}(x)+l(l+d-2)G_l^{d-2\over 2}(x)=0, x\in 
(-1,1).\label{F5b}
\end{equation}
Hence the operator $F_l$ defined on $L^2([0,1], r^{d-1}dr)$ by 
\begin{equation}
F_l g(s)=\int_0^1 g(r)f_l(s,r)r^{d-1}dr\label{D2}
\end{equation}
is a selfadjoint compact (in fact Hilbert-Schmidt) operator in $L^2([0,1], r^{d-1}dr)$. 

For these last statements we used the following estimates
\begin{eqnarray}
r^{d-1}|f_l(s,r)|&\le & C_l\int_0^\pi {r^{d-1}\sin(\theta_1)^{d-2}\over 
\big(s^2+r^2-2rs\cos(\theta_1)\big)^{d-1\over 2}} d\theta_1\nonumber\\
&=&C_l\int_0^\pi {r^{d-1}\sin(\theta_1)^{d-2}\over 
\big(r^2\sin(\theta_1)^2+(s-r\cos(\theta_1))^2\big)^{d-2\over 2}\sqrt{(s^2+r^2-2rs\cos(\theta_1)}\big)} d\theta_1\nonumber\\
&\le&C_l\int_0^\pi {r\over 
\sqrt{(s^2+r^2-2rs\cos(\theta_1)}\big)} d\theta_1\le C_l'(\ln({1\over r-s})+1),\label{D10}
\end{eqnarray}
for any $(s,r)\in [0,1]^2$, $s<r$, and for some constants $C_l$ and $C_l'$.\hfill $\Box$

\subsection{Proof of Lemma \ref{lem_specbd}}
Let us first consider the Gegenbauer polynomials introduced above in \eqref{D1}. We recall the property \cite[Chapter V, 5.2.1 and 5.1.3]{MOS}:
$
G_l^{d-2\over 2}(-x)=(-1)^l G_l^{d-2\over 2}(x).
$
Hence $l$ being odd we actually have
$
G_l^{d-2\over 2}(-1)=-G_l^{d-2\over 2}(1)=-1.
$
Therefore there exist positive numbers $\ep$ and $\eta$, $\eta< 1/2$ so that 
$$
-|\S^{d-2}|G_l^{d-2\over 2}(t)\ge \ep
$$
for $t\in (-1,-1+\eta)$. 
Now let $(s,r)\in[\eta,1]^2$, $s\not=r$. Then the above estimate gives
$$
-|\S^{d-2}|\int_{-1}^{-1+\eta} {(1-t^2)^{d-3\over 2} G_l^{d-2\over 2}(t) e^{\sigma\sqrt{s^2+r^2-2rst}}\over 
\big(s^2+r^2-2rs t\big)^{d-1\over 2}} dt
\ge \ep \int_{-1}^{-1+\eta} {(1-t^2)^{d-3\over 2}  e^{\sigma\sqrt{s^2+r^2-2rst}}\over 
\big(s^2+r^2\big)^{d-1\over 2}} dt.
$$
The integrand on the right hand side is non-negative and the integral over $(-1,-1+\eta)$ is bounded below by the integral of the same integrand over the smallest interval $(-1+\eta/2,-1+\eta)$. In addition using monotonicity in $t$ of the denominator and of the exponential we easily obtain
\begin{equation}
-|\S^{d-2}|\int_{-1+\eta/2}^{-1+\eta} {(1-t^2)^{d-3\over 2} G_l^{d-2\over 2}(t) e^{\sigma\sqrt{s^2+r^2-2rst}}\over 
\big(s^2+r^2\big)^{d-1\over 2}} dt
\ge c(\ep,\eta)   e^{\sigma\sqrt{(s+r)^2-2\eta sr}}\label{D4}
\end{equation}
where $c(\ep,\eta)$ is the positive constant  $\ep\eta(\eta-\eta^2/4)^{d-3\over 2}/(2^{d+1\over 2}\eta^{d-1})$. We also have the following bound
\begin{eqnarray}
-|\S^{d-2}|\int_{-1+\eta}^{1} {(1-t^2)^{d-3\over 2} G_l^{d-2\over 2}(t) e^{\sigma\sqrt{s^2+r^2-2rst}}\over 
\big(s^2+r^2-2rs t\big)^{d-1\over 2}} dt\nonumber\\
\ge-C_l
(2\eta^2)^{1-d\over 2}\int_{-1+\eta}^0  e^{\sigma\sqrt{s^2+r^2-2rst}} dt-C_le^{\sigma\sqrt{s^2+r^2}}\int_0^1 {(1-t^2)^{d-3\over 2}dt\over 
\big(s^2+r^2-2rs t\big)^{d-1\over 2}}\label{D7}
\end{eqnarray}
for some positive constant $C_l$ that depends only on $l$.
We combine  \eqref{D1}, \eqref{D4} and \eqref{D7} and obtain
\begin{eqnarray}
&&f_l(s,r)\ge c(\ep,\eta)   e^{\sigma\sqrt{(s+r)^2-2\eta sr}}\nonumber\\
&&\times \Big[1-{C_l\over c(\ep,\eta) }(2\eta^2)^{1-d\over 2}\int_{-1+\eta}^0  e^{\sigma(\sqrt{s^2+r^2-2rst}-\sqrt{(s+r)^2-2\eta sr})} dt
\label{D8}\\
&&-{C_l\over c(\ep,\eta) }e^{\sigma(\sqrt{s^2+r^2}-\sqrt{(s+r)^2-2\eta sr})}\int_0^1 {(1-t^2)^{d-3\over 2}dt\over 
\big(s^2+r^2-2rs t\big)^{d-1\over 2}}
\Big].\nonumber
\end{eqnarray}
We note that the second term inside the brackets of the right hand side goes to $0$ uniformly in $(s,r)\in [\eta,1]^2$ as $\sigma\to +\infty$. Indeed we use the identity ``$a-b=(a^2-b^2)/(a+b)$'' and obtain that
$$
\int_{-1+\eta}^0  e^{\sigma(\sqrt{s^2+r^2-2rst}-\sqrt{(s+r)^2-2\eta sr})} dt
\le \int_0^{1-\eta} e^{-\sigma \eta^2 t\over 2} dt.
$$
Hence there exists $\sigma_0(l,\eta)$ so that for any $\sigma\ge \sigma_0(l,\eta)$ the second term is bounded by ${ c(\ep,\eta)(2\eta^2)^{d-1\over 2}\over 2 C_l}$, and we obtain 
\begin{equation}
f_l(s,r)
\ge c(\ep,\eta)
\Big[{ e^{\sigma\sqrt{(s+r)^2-2\eta sr}}\over 2}-{C_l\over c(\ep,\eta)}e^{\sigma(\sqrt{s^2+r^2})}\int_0^1 {(1-t^2)^{d-3\over 2}dt \over 
\big(s^2+r^2-2rs t\big)^{d-1\over 2}} \Big]
\label{D11}
\end{equation}
when $\sigma\ge \sigma_0(l,\eta)$.

Now pick any point $r_0\in (\eta,1)$. There exist $\delta \in (0,\min(r_0-\eta,1-r_0))$ and $\ep_1>0$ so that 
\begin{equation}
\inf_{(r,s)\in [r_0-\delta,r_0+\delta]^2}\sqrt{(s+r)^2-2\eta sr}\ge \ep_1+\sup_{(r,s)\in [r_0-\delta,r_0+\delta]^2}\sqrt{s^2+r^2}.\label{D14}
\end{equation}
We denote $\ep_2$ the positive number $\sup_{(r,s)\in [r_0-\delta,r_0+\delta]^2}\sqrt{s^2+r^2}$.

Let $g$ be any continuous and non-negative function on $[0,1]$ normalized by the condition $\|g\|_{L^2([0,1],r^{d-1}dr)}=1$ so that $g$ is compactly supported inside $[r_0-\delta,r_0+\delta]$.
We combine \eqref{D2}, \eqref{D11}, \eqref{D14} and we obtain for any $\sigma\ge \sigma_0(l,\eta)$
\begin{eqnarray*}
&&\l F_l g, g\r_{L^2([0,1],r^{d-1}dr)}=\int_{[0,1]^2} g(r)g(s)f_l(s,r)(rs)^{d-1}drds\nonumber\\
&\ge& c(\ep,\eta)e^{\sigma(\ep_1+\ep_2)}\int_{[0,1]^2} g(r)g(s)(rs)^{d-1} \Big[{1\over 2}-{C_le^{-\sigma\ep_1}\over c(\ep,\eta)}\int_0^1 {(1-t^2)^{d-3\over 2}dt \over 
\big(s^2+r^2-2rs t\big)^{d-1\over 2}} \Big]dr ds.
\end{eqnarray*}
The integrand in $r,s,t$ inside the integral in $t$ is integrable in $L^1([0,1]^2\times [-1,1], (rs)^{d-1}dr$ $ds dt)$, see \eqref{D10}. 
Hence we obtain that there exists a constant $\sigma(l,g)$ so that for any $\sigma \ge \sigma(l,g)$ 
$$
\l F_l g, g\r_{L^2([0,1],r^{d-1}dr)}
\ge  {c(\ep,\eta)\over 2}e^{\sigma\ep_1/2}\Big(\int_0^1 g(r)r^{d-1}dr\Big)^2.
$$
Obviously there exists a constant $\sigma_1(l,g)\ge \sigma(l,g)$ so that for any $\sigma \ge \sigma_1(l,g)$
$$
\l F_l g, g\r_{L^2([0,1],r^{d-1}dr)} 
\ge 2,
$$
which proves that the selfadjoint compact operator $F_l$ has a positive eigenvalue greater or equal to 2.\hfill $\Box$


\bibliographystyle{siam}

\end{document}